\def\bptnote#1{}
\newcommand{\notag}{\nonumber}
\renewcommand{\underline}[1]{\underaccent{\bar}#1}
\renewcommand{\overline}[1]{\bar{#1}}
\renewcommand{\P}{\mathbb{P}}
\newcommand{\E}{\mathbb{E}}
\newcommand{\N}{\mathbb{N}}
\newcommand{\R}{\mathbb{R}}
\newcommand{\F}{\mathcal{F}}
\newcommand{\G}{\mathcal{G}}
\newcommand{\PP}{\mathcal{P}}
\newcommand{\LL}{\mathcal{L}}
\newcommand{\TT}{\mathcal{T}}
\renewcommand{\epsilon}{\varepsilon}
\def\op{\overline p}
\newtheorem{theorem}{Theorem}
\newtheorem{proposition}{Proposition}
\newtheorem{lemma}{Lemma}
\begin{document}
\begin{frontmatter}

\title{Traveling waves and homogeneous fragmentation}
\runtitle{Traveling waves and homogeneous fragmentation}

\begin{aug}
\author[A]{\fnms{J.} \snm{Berestycki}\ead[label=e1]{julien.berestycki@upmc.fr}\thanksref{a1}},
\author[B]{\fnms{S. C.} \snm{Harris}\ead[label=e2]{s.c.harris@bath.ac.uk}}
\and
\author[B]{\fnms{A. E.} \snm{Kyprianou}\corref{}\ead[label=e3]{a.kyprianou@bath.ac.uk}}
\runauthor{J. Berestycki, S. C. Harris and A. E. Kyprianou}
\affiliation{Universit\'e Paris VI, University of Bath  and University of Bath}
\address[A]{J. Berestycki\\
Laboratoire de Probabilit\'es\\
\quad  et Mod\`eles Al\'eatoires\\
Universit\'e Paris VI\\
 Case courrier 188\\
  4, Place Jussieu\\
75252 Paris Cedex 05\\
France\\
\printead{e1}} 
\address[B]{S. C. Harris\\
 A. E. Kyprianou\\
 Department of Mathematical Sciences\\
 University of Bath\\
  Claverton Down\\
   Bath BA2 7AY\\
   UK\\
\printead{e2}\\
\phantom{\textsc{E-mail}: }\printead*{e3}}
\end{aug}
\thankstext{a1}{Supported in part by a Royal Society
International Incoming Visitors grant for which all three authors are grateful.}

\received{\smonth{11} \syear{2009}}
\revised{\smonth{9} \syear{2010}}

%
\begin{abstract}
We formulate the notion of the classical Fisher--Kolmogorov--Petrovskii--Piscounov (FKPP)
reaction diffusion equation associated with a homogeneous conservative fragmentation process and study
its traveling waves. Specifically, we establish existence, uniqueness and asymptotics. In the spirit of
classical works such as McKean
[\textit{Comm. Pure Appl. Math}. \textbf{28} (1975) 323--331] and
[\textit{Comm. Pure Appl. Math.} \textbf{29} (1976) 553--554], Neveu [In \textit{Seminar on Stochastic Processes}
(1988) 223--242 Birkh\"auser]
and Chauvin [\textit{Ann. Probab}. \textbf{19} (1991) 1195--1205],
our analysis exposes the relation
between traveling waves and certain additive and multiplicative martingales via laws of large numbers  which have
been previously studied in the context of Crump--Mode--Jagers (CMJ) processes by
Nerman
[\textit{Z. Wahrsch. Verw. Gebiete} \textbf{57} (1981) 365--395] and in the context of
fragmentation processes by Bertoin and Martinez [\textit{Adv. in Appl. Probab.} \textbf{37} (2005) 553--570]
and Harris, Knobloch and Kyprianou
[\textit{Ann. Inst. H. Poincar\'e Probab. Statist.} \textbf{46}
  (2010)  119--134].
The conclusions and methodology presented here appeal to a number of concepts coming from the theory of branching
random walks and branching Brownian motion (cf. Harris [\textit{Proc. Roy. Soc. Edinburgh Sect.~A}
 \textbf{129} (1999) 503--517]
and Biggins and Kyprianou [\textit{Electr. J. Probab.} \textbf{10} (2005) 609--631]) showing their mathematical
robustness even within the context of fragmentation theory. 
\end{abstract}

%
\begin{keyword}[class=AMS]
\kwd{60J25}
\kwd{60G09}.
\end{keyword}
\begin{keyword}
\kwd{Fisher--Kolmogorov--Petrovskii--Piscounov equation}
\kwd{traveling waves}
\kwd{homogeneous fragmentation processes}
\kwd{product martingales}
\kwd{additive martingales}
\kwd{spine decomposition}
\kwd{stopping lines}.
\end{keyword}

\end{frontmatter}

\section{Introduction and main results}\label{intro}

\subsection{Homogeneous fragmentations and branching random walks}\label{sec1.1}

Fragmentation is a natural phenomena that occurs in a wide range of
contexts and at all scales. The stochastic models used to describe this
type of process have attracted a lot of attention lately and form a
fascinating class of mathematical objects in their own right, which are
deeply connected to branching processes, continuum random trees and
branching random walks. A good introduction to the study of
fragmentation (and coalescence) is \cite{Be3} which also contains many
further references.

In the present work, we intend to explore and make use of the
connection between random fragmentation processes, branching random
walk (BRW) and branching Brownian motion (BBM). More precisely we
define the notion of the fragmentation
Fisher--Kolmogorov--Petrovskii--Piscounov (FKPP) equation and study the
solutions of this equation.

Let us start by explaining the connection between fragmentation and BRW
in the simple framework of \textit{finite-activity} conservative
fragmentations. In this context everything can be defined and
constructed \textit{by hand}. More general constructions will follow. Let
$\nu(\cdot)$ be a finite measure on $\nabla_1 =\{ s_1\ge s_2 \ge
\cdots\ge0, \sum_i s_i =1 \}$ with total mass $\nu(\nabla
_1)=\gamma.$ The homogeneous mass-fragmentation process with
dislocation measure $\nu$ is a $\nabla_1$-valued Markov process $X \dvtx
= (X(t),t\ge0) $, where $X(t) = (X_1(t),X_2(t), \ldots )$, which
evolves as follows: $X(0)=(1,0,\ldots );$ this initial fragment then
waits an exponential time $T_1$ with parameter $\gamma$ after which it
splits according to the distribution $X(T_1) \sim\gamma^{-1} \nu
(\cdot).$ Each of these pieces then starts to fragment independently
of the others with the same law as the original object. That is, each
fragment $X_i(T_1)$ waits an independent exponential-$\gamma$ time
after which it splits into $( X_i(T_1)s_1, X_i(T_1) s_2,\ldots )$ where
$s =(s_1,s_2,\ldots )\sim\gamma^{-1} \nu(\cdot)$ and so on. When a
fragment splits, we need to relabel all fragments since their relative
ranks have changed.

This process can be seen as a continuous-time BRW. More precisely, if
we let $ Z(t) = (-\log X_1(t), -\log X_2(t), \ldots )$, then $Z(t)$
evolves according to the following dynamic. Suppose $Z(t) = (z_1, z_2,
\ldots ),$ then each individual in the population behaves
independently, waits an independent and exponentially distributed length of time with
parameter $\gamma$ and then branches into offspring which are situated
at distances $(-\log s_1, -\log s_2, \ldots )$ relative to their
parent's position where, as before, $s\in\nabla_1$ has distribution
$\gamma^{-1}\nu(\cdot).$
Figure~\ref{F:frag rw} 
shows an example where $\nu(\cdot) = \delta_{\{1/3,1/3,1/3,0,\ldots
\}} + \delta_{\{1/2,1/2,0,\ldots \}}.$

\begin{figure}

\includegraphics{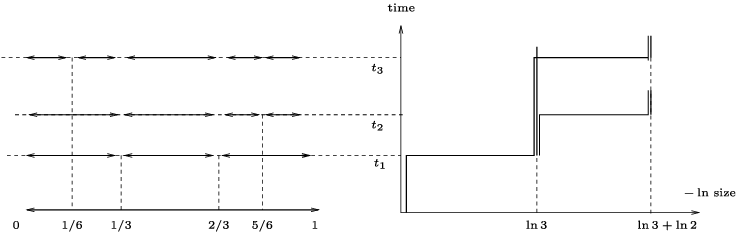}

\caption{The initial fragment $(0,1)$ splits into three equal parts
$(0,1/3), (1/3 , 2/3), (2/3,1)$ at time $t_1$. Then at $t_2, (2/3,1)$
splits into two halves and at time $t_3$ the same thing happens to $(0,1/3)$.}
\label{F:frag rw}
\end{figure}

This is not, however, the whole story. In particular, it is possible to
define homogenous fragmentation processes, also denoted $X$, such that
the dislocation measure $\nu$ is infinite (but sigma-finite). In this
case, fragmentation happens continuously in the sense that there is no
first splitting event and along any branch of the fragmentation tree
the branching points are dense in any finite interval of time.
Moreover, it is still the case that the Markov and fragmentation
properties hold. In this more general setting the latter two properties
are described more succinctly as follows. Given that $X(t) =
(s_1,s_2,\ldots )\in\nabla_1$, where $t\geq0$, then for $u>0$,
$X(t+u)$ has the same law as the variable obtained by ranking in
decreasing order the sequences $X^{(1)}(u), X^{(2)}(u),\ldots $ where
the latter are independent, random mass partitions with values in
$\nabla_1$ having the same distribution as $X(u)$ but scaled in size
by the factors under $s_1, s_2, \ldots ,$ respectively.

The construction of such processes, known as homogeneous fragmentation
processes, requires some care and was essentially carried out by
Bertoin \cite{Be2,Be3} (see also \cite{berest1}). We defer a brief
overview of the general construction to the next section. It is enough
here to note that $\nu(ds)$ is the ``rate'' at which given fragments
split into pieces whose relative sizes are given by $s=(s_1,s_2,\ldots
).$ This measure must verify the integrability condition $ \int
_{\nabla_1} (1-s_1)\nu(ds) <\infty.$ Some information about $\nu$
is captured by the function
%
\begin{eqnarray}\label{phi}
\Phi(q):= \int_{\nabla_1}  \Biggl( 1-\sum_{i=1}^{\infty} s_i^{q+1}
 \Biggr)\nu(ds),  \qquad  q>\underline{p},
\end{eqnarray}
where
\[
\underline{p} := \inf \Biggl\{ p \in\R\dvtx \int_{\nabla_1}
\sum_{i=2}^{\infty} s_i^{p+1} \nu(ds) < \infty \Biggr\} \le0.
\]
%
From
now on, we always assume that $\underline p < 0.$
As we shall reveal in more detail later, the function $p \mapsto\Phi
(p)$ turns out to be the Laplace exponent of a natural subordinator
associated to the fragmentation.
Hence, $\Phi$ is strictly increasing, concave and smooth such that
$\Phi(0)=0.$
The equation
\[
(p+1)\Phi'(p)=\Phi(p)
\]
on $p>\underline{p}$ is known to have a unique solution in $(0, \infty
)$ which we shall denote by $\overline{p}$; cf. \cite{Be4}. Moreover,
$(p+1)\Phi'(p)- \Phi(p)>0$ when $p\in(\underline p, \overline p)$.
This implies that the function
%
\begin{equation}
c_p := \frac{\Phi(p)}{p+1}
\label{E: cp}
\end{equation}
reaches its unique maximum on
$(\underline{p},\infty)$ at $\overline{p}$ and this maximum is equal to
$\Phi'(\overline{p}).$



\subsection{The fragmentation FKPP equation and traveling
waves}\label{S: KPP analogue}

The main aim of this paper is to formulate the analogue of the
Fisher--Kolmogorov--Petrovskii--Piscounov (FKPP) equation for
fragmentation processes and thereby to analyze the existence,
uniqueness and asymptotics of its traveling waves.

Consider a homogeneous fragmentation process $\Pi$ with dislocation
measure $\nu$.
The equation
%
\begin{equation}
-c\psi'(x) + \int_{\nabla_1}\biggl\{\prod_{i}\psi(x- \log s_i) -\psi
(x)\biggr\}\nu(ds) =0, \qquad    x\in\mathbb{R},\label{E: integ dif equ-first-view}
\end{equation}
is the \textit{fragmentation traveling wave} equation with wave speed
$c\in\mathbb{R}.$ Equation~(\ref{E: integ dif equ-first-view}) is
the analogue in the fragmentation context of the classical traveling
wave equation associated with the FKPP equation. This is discussed in
greater detail in Section \ref{S: disc} as well as existing results of
the same flavor concerning BBM and BRWs. However, let us momentarily
note the connection of (\ref{E: integ dif equ-first-view}) with a
currently open problem and hence, the motivation for the current work.

A classical result, due to Bramson \cite{bram}, establishes that the
growth, as a function of time $t$, of the right-most particle in a
one-dimensional, unit rate, binary splitting BBM is $\sqrt{2}t -
3\cdot2^{-3/2}\log t + O(1)$. The precise quantification of this
result is done through a particular traveling wave solution of the FKPP
equation. On account of the technical similarities between BBM and
fragmentation processes, there are many reasons to believe that a
similar result should also hold in the latter setting. Indeed, it is
already known that the largest block at time $t>0$, $X_1(t)$, satisfies
a strong law of large numbers in the sense that
\[
\lim_{t\uparrow\infty} \frac{-\log X_1(t)}{t} = c_{\overline{p}},
\]
almost surely. Therefore, a natural conjecture, motivated by Bramson's
result, is that there exists a deterministic function of time $(\gamma
(t)\dvtx t\geq0)$ such that
\[
\mathbb{P}\bigl(-\log X_1(t) - \gamma(t)\geq x\bigr)\rightarrow\psi
_{\overline{p}}(x),
\]
where $\psi_{\overline{p}}(x)$ is a solution to (\ref{E: integ dif
equ-first-view}) with $c=c_{\overline{p}}$ and moreover
\[
\gamma(t) = c_{\overline{p}}t -\theta\log t +O(1)
\]
for some constant $\theta$.

We do not address the above conjecture in this paper. Instead, we shall
study the existence, uniqueness and asymptotics of the solution of this
\textit{fragmentation traveling wave} equation. Below we introduce our
main result in this direction. We first introduce two classes of functions.
\begin{definition}
The class of functions $\psi\in C^1(\mathbb{R})$ such that $\psi
(-\infty)=0$ and $\psi(\infty) = 1$ and $\psi$ is monotone
increasing is denoted by $\mathcal{T}_1$. For each $p \in(\underline
{p},\op]$ we further define $\mathcal{T}_2(p) \subset\mathcal{T}_1$
as the set of $\psi_p \in\mathcal{T}_1$ such that $ L_p(x):=
e^{(p+1)x} (1 -\psi_p(x) )$ is monotone increasing.
\end{definition}

Let $X$ be a homogenous mass fragmentation process as in the previous
section and let $\mathbb{F} = \{\mathcal{F}_t \dvtx t\geq0\}$ where $\F
_t := \sigma\{ X(u), u \le t\}$ is its natural filtration. Our main
result follows.

\begin{theorem} \label{T: main thm}
Fix $p \in(\underline{p},\op]$ and suppose that $\psi_p\dvtx\mathbb
{R}\mapsto(0,1]$ belongs to $ \TT_2(p).$ Then
%
\begin{equation}\label{E:prod mart}
M(t,p,x) := \prod_{i} \psi_p\bigl(x -\log X(t) -c_p t\bigr), \qquad
t\geq0,
\end{equation}
is a $\mathbb{F}$-martingale if and only if $\psi_p$ solves (\ref{E: integ dif equ-first-view}) with $c=c_p$.
Furthermore, up to an additive translation, there is only one such
function $\psi_p\in\TT_2(p)$ which is given by
%
\begin{equation}\label{E: unique candidate for psi}
\psi_p(x) = \E\bigl(\exp\bigl\{-e^{-(p+1)x} \Delta_p\bigr\}\bigr),
\end{equation}
where $\Delta_p>0$ a.s. is an $\F_{\infty}$-measurable martingale
limit (see Theorem \ref{T:mgcgce} and Definition~\ref{deltadef}
for more details).
\end{theorem}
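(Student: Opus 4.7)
My plan splits into three parts corresponding to the three claims of the theorem.

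\medskip

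\noindent\emph{Step 1: equivalence of the martingale property and the PDE.} I would apply the extended generator of the Markov process $(|\Pi(t)|,t\ge 0)$ to the bounded test function $F_t(\eta):=\prod_i \psi_p(x-\log\eta_i-c_p t)$. Between dislocation events the masses are constant, and the deterministic $-c_p t$ drift contributes $-c_p M(t,p,x)\sum_i \psi_p'(Y_i)/\psi_p(Y_i)$, while fragment $i$ dislocates at rate $\nu$, replacing the factor $\psi_p(Y_i)$ by $\prod_j \psi_p(Y_i-\log s_j)$; here $Y_i=Y_i(t):=x-\log|\Pi_i(t)|-c_p t$. Thus $M(t,p,x)=M(0,p,x)+N_t+\int_0^t A(s)\,ds$ with $N$ a local martingale and
\begin{equation*}
A(s)=M(s,p,x)\sum_i \psi_p(Y_i(s))^{-1}\Bigl[-c_p\psi_p'(Y_i(s))+\int_{\nabla_1}\bigl\{\textstyle\prod_j\psi_p(Y_i(s)-\log s_j)-\psi_p(Y_i(s))\bigr\}\nu(ds)\Bigr].
\end{equation*}
Since $M\in[0,1]$ is bounded, $M$ is a true martingale iff $A\equiv 0$. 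Varying the initial configuration (one fragment, arbitrary $x$) shows $Y_1$ ranges over all of $\R$, forcing the bracketed expression to vanish pointwise: this is exactly (\ref{E: integ dif equ-first-view}) with $c=c_p$. Integrability near $s_1=1$ follows from the bound $\prod_j\psi_p(Y-\log s_j)-\psi_p(Y)=O(1-s_1)$ and $\int(1-s_1)\nu(ds)<\infty$.

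\medskip

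\noindent\emph{Step 2: existence, via the additive martingale.} I would take $\Delta_p$ as furnished by Theorem~\ref{T:mgcgce}, essentially the a.s.\ limit of the additive martingale $W_p(t):=e^{\Phi(p)t}\sum_i|\Pi_i(t)|^{p+1}$ on $(\underline p,\overline p)$ (with a derivative-martingale variant at $p=\overline p$), and define $\psi_p$ by (\ref{E: unique candidate for psi}). Dominated convergence and $\Delta_p>0$ a.s.\ give $\psi_p(\pm\infty)\in\{0,1\}$ together with monotonicity. Writing $L_p(x)=\E[\alpha(1-e^{-\Delta_p/\alpha})]$ with $\alpha:=e^{(p+1)x}$, differentiation in $\alpha$ leaves the nonnegative integrand $1-e^{-u}-ue^{-u}\ge 0$ (where $u:=\Delta_p/\alpha$), so $L_p$ is increasing and $\psi_p\in\TT_2(p)$. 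From $e^{-(p+1)Y_i(t)}=|\Pi_i(t)|^{p+1}e^{\Phi(p)t}$ and the fragmentation branching property one obtains the a.s.\ splitting
\begin{equation*}
\Delta_p = \sum_i e^{-(p+1)Y_i(t)}\,\Delta_p^{(i)}, \qquad \Delta_p^{(i)} \text{ i.i.d.\ copies of }\Delta_p,\ \text{independent of }\F_t.
\end{equation*}
Conditioning on $\F_t$ then yields
\begin{equation*}
\E\bigl[\exp(-e^{-(p+1)x}\Delta_p)\,\bigm|\,\F_t\bigr]=\prod_i \psi_p(x+Y_i(t))=M(t,p,x),
\end{equation*}
exhibiting $M(\cdot,p,x)$ as a Doob martingale; Step~1 then shows $\psi_p$ solves~(\ref{E: integ dif equ-first-view}).

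\medskip

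\noindent\emph{Step 3: uniqueness up to additive translation.} Let $\psi_p\in\TT_2(p)$ be any solution. By Step~1, $M(\cdot,p,x)\in[0,1]$ is a bounded martingale, so converges $\P$-a.s.\ and in $L^1$ to $M_\infty(x)$ with $\E M_\infty(x)=\psi_p(x)$. The monotone function $L_p$ admits a limit $L_\infty\in(0,\infty]$ at $+\infty$. I would first rule out $L_\infty=\infty$: if it were infinite, then since $W_p(\infty)>0$ a.s.\ and the bulk of $W_p(t)=\sum_i e^{-(p+1)Y_i(t)}$ is carried by fragments with $Y_i(t)\to\infty$, the identity $-\log\psi_p(y)\ge 1-\psi_p(y)=e^{-(p+1)y}L_p(y)$ would force $-\log M_\infty(x)=+\infty$, contradicting $\E M_\infty(x)=\psi_p(x)>0$. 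With $L_\infty<\infty$, the expansion $\log\psi_p(y)=-e^{-(p+1)y}L_p(y)+O(e^{-2(p+1)y})$ as $y\to\infty$, combined with $W_p(t)\to W_p(\infty)$ and a uniform estimate, gives
\begin{equation*}
\log M_\infty(x)=-e^{-(p+1)x}\,L_\infty\,W_p(\infty).
\end{equation*}
Taking expectations identifies $\psi_p(x)$ with (\ref{E: unique candidate for psi}) evaluated at $x+(p+1)^{-1}\log(L_\infty/\text{const})$, establishing uniqueness up to additive translation.

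\medskip

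The main obstacle lies in Step~3: making the uniform replacement $L_p(x+Y_i(t))\approx L_\infty$ across all fragments simultaneously requires careful control of the (few) fragments whose centred positions $Y_i(t)$ remain bounded or drift to $-\infty$, exploiting monotonicity of $L_p$ and finiteness of $W_p(\infty)$. The critical case $p=\overline p$, where the plain additive martingale has limit $0$ a.s., demands a derivative/spine-based adaptation (as hinted at in the abstract) for both the definition of $\Delta_p$ and the passage to the limit above.
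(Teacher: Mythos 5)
Your Steps~1 and~2 are essentially the paper's: Step~1 is the Poisson-measure semimartingale/Feynman--Kac argument of the final section (though the converse direction --- martingale implies $\mathcal{A}_p\psi_p\equiv 0$ --- is handled there with more care via a right-continuity-at-zero argument for $F(t):=M(t,p,x)\sum_k\psi_p(z_k(t))^{-1}\mathcal{A}_p\psi_p(z_k(t))$ followed by a stopping-time contradiction; your ``varying the initial configuration'' sketch glosses over the passage from $\int_0^t A(s)ds=0$ to pointwise vanishing and over the integrability issues that the paper handles in Lemma~\ref{L: psi in T2 is in domain of L}). Step~2 matches the existence argument of Theorem~\ref{T: existence} and Proposition~\ref{P: in T2}; your direct verification that $1-e^{-u}-ue^{-u}\ge0$ is a clean alternative to the Feller completely-monotone citation.

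The genuine gap is in Step~3, which is the core of the theorem, and you in fact flag it yourself. What you present there is precisely the heuristic the paper gives in Section~\ref{S: disc} as motivation, not a proof. Two specific problems. First, the ``uniform replacement'' $L_p(x+Y_i(t))\approx L_\infty$ across all fragments cannot be carried out along fixed times $t$: at time $t$ there always exist fragments with $-\log|\Pi_i(t)|-c_pt$ small or even negative, and monotonicity of $L_p$ alone does not control them. The paper's resolution is structural: it replaces fixed times by the first-passage stopping lines $\ell^{(p,z)}$, along which every surviving fragment satisfies $-\log|\Pi_i(\ell^{(p,z)})|-c_p\ell^{(p,z)}_i> z$, i.e.\ all centred positions are uniformly bounded below by $z$. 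It then proves an additive slow-variation result for $L_p$ (Theorem~\ref{T: sv}), a law of large numbers for the $y_i^{(p)}$-weighted empirical distribution along these lines (Theorems~\ref{T: SandWLLN} and~\ref{T: SLLN}, the latter via an embedded CMJ process and Nerman's theorem), and Lemma~\ref{L: Lswitch} to legitimately ``factor out'' $L_p$. None of this apparatus is present in your sketch. Second, your claim that $L_\infty<\infty$ is actually \emph{false} at the critical parameter: by Theorem~\ref{T:L cvg to cstt}, $L_{\overline p}(x)/x\to k_{\overline p}$, so $L_{\overline p}(\infty)=\infty$, and the correct identification at $p=\overline p$ involves the derivative martingale limit $\partial W(\infty,\overline p)$ and a truncated positive version of the derivative martingale, not a finite constant multiple of $W(\infty,\overline p)$. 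Your argument ruling out $L_\infty=\infty$ relies on $W_p(\infty)>0$ a.s., which fails precisely at $p=\overline p$. You acknowledge both issues at the end, but acknowledging them does not close them; as written, Step~3 does not prove uniqueness.
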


 Note that, $M(t,p,x)$ is a martingale, then it is
necessarily uniformly integrable since it is bounded in $[0,1]$ and,
therefore, converges in $L^1$ to its limit, which we denote by $M(\infty
, p, x)$.

\section{Discussion}\label{S: disc}\label{sec2}

\subsection{Homogeneous fragmentations}\label{sec2.1}

We start by stating the definition of and stating some results concerning
homogeneous fragmentations.

The construction and manipulation of general homogeneous fragmentations
are best carried out in the framework of partition valued
fragmentations. Let $\PP=\{ \mbox{partitions of } \N \}.$ An element
$\pi$ of $\PP$ can be identified with an infinite collection of
blocks (where a block is just a subset of $\N$ and can be the empty
set), $\pi= (B_1,B_2,\ldots )$ where $\bigcup_i B_i =\N$, $B_i \cap B_j
=\varnothing$ when $i \neq j$ and the labeling corresponds to the order of the
least element, that is, if $w_i$ is the least element of $B_i$ (with
the convention $\min\varnothing=\infty$), then $i \le j \Leftrightarrow w_i
\le w_j.$ The reason for such a choice is that we can discretize the
processes by looking at their restrictions to $[n]:=\{1,\ldots ,n\}$
(if $\pi\in\PP$, we denote by $\pi_{|[n]}$ the natural partition it
induces on $[n]$).
Roughly speaking, a homogeneous fragmentation is a $\PP$-valued
process $[\Pi(t),t\ge0]$ such that blocks split independently of each
other and with the same intensity. Given a subset $A=\{a_1,a_2,\ldots \}
$ of $\N$ and a partition $\pi=(B_1,B_2,\ldots ) \in\PP$, we
formally define the \textit{splitting of $B$ by $\pi$} to be the
partition of $B$ (i.e., a collection of disjoint subsets whose union is
$B$) defined by the equivalence relation $a_i \sim a_j$ if and only if
$i$ and $j$ are in the same block of $\pi.$

\begin{definition}  \label{fragdef}
Let $\Pi=(\Pi(t),t\ge0)$ be a $\PP$-valued Markov process with c\`adl\`ag\footnote{Continuity is
understood with respect to the following
metric. The distance\vspace*{-2pt} between two partitions, $\pi$ and $\pi'$, in
$\mathcal{P}$, is defined to be $2^{-n(\pi, \pi')}$ where
$n(\pi, \pi')$ is the largest\vspace*{-2pt} integer such that $\pi_{|[n]} = \pi
'_{|[n]}$.} sample paths. $\Pi$ is called a homogeneous fragmentation
if its semi-group has the following, so-called, \textit{fragmentation
property}: For every $t,t' \ge0$ the conditional distribution of $\Pi
(t+t')$ given $\Pi(t)=\pi$ is that of the collection of blocks one
obtains by splitting the blocks $\pi_i, i=1,\ldots, $ of $\pi$ by an
i.i.d. sequence $\Pi^{(i)}(t')$ of exchangeable random partitions
whose distribution only depends on $t'$. We also impose the condition that $\Pi(0)$
is the trivial partition with a single block made up of the whole set
$\N$ (see \cite{Be3} for further discussion).
\end{definition}

Given $\pi=(\pi_1,\pi_2,\ldots ) \in\PP$, we say that it has
asymptotic frequencies if for each $i$
\[
|\pi_i| =\lim_{n \to\infty} \frac{\# \pi_i \cap[n]}{n}
\]
exists. We write $|\pi| = (|\pi_i|, i \in\N)$ for the decreasing
rearrangement of the frequencies of the blocks. It is known that if
$\Pi$ is a homogeneous fragmentation, then almost surely, for all $t
\ge0, \Pi(t)$ has asymptotic frequencies. The process $[|\Pi(t)|,
t\ge0]$ is called a mass fragmentation and coincides with the process
described in the opening section. One can define directly mass
fragmentations (i.e., Markov processes with state space $\nabla_1$
such that each fragment splits independently with the same rate) but
all such processes can be seen as the frequency process of an
underlying integer partition fragmentation.

Given the ``stationary and independent increments'' flavor of the
definition of a fragmentation process, it is no surprise that there is
a L\'evy--Khintchin type description of the law of these processes.
Bertoin \cite{Be3} shows that the distribution of a homogeneous
fragmentation $\Pi$ is completely characterized by a sigma-finite
measure $\nu$ on
$\nabla^{(-)}_1 := \{s_1 \ge s_2 \ge\cdots \ge0 , \sum_i s_i \le1\}
$ (\textit{the disclocation measure}) which satisfies
%
\begin{equation}\label{E:integ cond de nu}
\int_{\nabla^{(-)}_1} (1-s_1)\nu(ds) <\infty
\end{equation}
and a parameter $\mathtt{c} \ge0$ (\textit{the erosion rate}).

The meaning of the dislocation measure $\nu$ is best understood
through a Poissonian construction of the process $\Pi$ which is given
in \cite{Be2}.
Roughly speaking, for each label $i\in\N$ we have an independent
Poisson point process on $\R_+ \times\nabla_1^{(-)}$ with intensity
$dt \otimes\nu(ds).$
If $(t_k, s_k)$ is an atom of the point process with label $i$, then at
time $t_k$ the block $B_i(t_k-)$ of $\Pi(t_k-)$ is split into
fragments of relative size given by $s_k$ (or more precisely, it is
split according to Kingman's paintbox partition directed by $s_k$). To
make the construction rigorous, one needs to show that the point
processes can be used to construct a compatible family of
Markov chains, $(\Pi^{(n)}(t), t\ge0)$, each of which lives on
$\PP_n$, the space of partitions of $\{1,\ldots, n\}$. Hence, $\nu
(ds)$ is the rate at which blocks independently fragment into
subfragments of relative size $s.$

The role of the erosion term ${\mathtt c}$ is easier to explain in
terms of mass fragmentations. Indeed, 
if $\Pi$ is a $(\nu,0)$-fragmentation, then
the process
\[
e^{-{\mathtt c}t} |\Pi(t)|=(e^{-{\mathtt c}t} |\Pi_1(t)|,
e^{-{\mathtt c}t} |\Pi_2(t)|, \ldots ), \qquad    t\geq0,
\]
is a $(\nu,{\mathtt c})$-mass fragmentation. The erosion is
essentially a deterministic phenomenon.
The dislocation measure $\nu$ thus plays the same role as in our
introductory example where $\nu$ was finite.

In the present work we will always suppose that the dislocation measure
$\nu$ is \textit{conservative}, that is, $\operatorname{supp}(\nu) \subseteq
\nabla_1$. Moreover, we assume there is \textit{no erosion}, namely
$\mathtt{c}=0$.

Recall the definition
\[
\Phi(q)= \int_{\nabla_1}  \Biggl( 1-\sum_{i=1}^{\infty} s_i^{q+1}
 \Biggr)\nu(ds), \qquad   q>\underline{p},
\]
and that the function $p \mapsto\Phi(p)$ is the Laplace exponent of a
pure jump subordinator associated to the fragmentation. This
subordinator is precisely the process $t \mapsto- {\log}|\Pi_1(t)|.$
It is also not difficult to show that the associated L\'evy measure is
given by
%
\begin{equation}
\label{m(dx)}
m(dx) := e^{-x}\sum_{i=1}^\infty\nu(-\log s_i \in dx).
\end{equation}

\subsection{Branching processes, product martingales and the FKPP equation}\label{sec2.2}

Our main theorem follows in the spirit of earlier results which concern
BBM or BRWs. We discuss these here and we explain why equation (\ref
{E: integ dif equ-first-view}) is the analogue of the FKPP traveling
wave equation for fragmentations.

The classical FKPP equation, in its simplest form, takes the form of
the nonlinear parabolic differential equation
%
\begin{equation}
\frac{\partial}{\partial t} u(x,t)= \frac{1}{2}\,\frac{\partial^2
}{\partial x^2}u(x,t) + u(x,t)^2 - u(x,t).
\label{E: classical}
\end{equation}
It is one of the simplest nonlinear partial differential equation which
admits a traveling wave solution of the form $u(t,x)=\psi(x+ct)$ where
$\psi\dvtx\R\mapsto[0,1]$ and $c$ is the speed of the traveling wave. It
is a classical result that a traveling wave solution to (\ref{E:
classical}) exists with speed $c$ if and only if $|c|\geq\sqrt{2}$,
in which case it solves
%
\begin{eqnarray}
\label{E: classicalTW}
\tfrac{1}{2}\psi''(x)- c \psi'(x) +\psi^2(x)-\psi(x) &=&0,\nonumber
\\[-8pt]
\\[-8pt]
\psi(-\infty)&=&0 ,  \qquad    \psi(\infty)=1.
\nonumber
\end{eqnarray}
%

There are many different mathematical contexts in which the FKPP
equation appears, not least of all within the original setting of gene
concentrations; cf. \cite{KPP,F}. However, McKean \cite
{mckeanapplicationbbmkpp}, Ikeda, Nagasawa and Watanabe \cite{INW1,INW2,INW3},
Neveu \cite{neveu} and Chauvin \cite{chauvin} all show
that the FKPP equation has a natural relationship with one-dimensional
dyadic BBM. This process consists of an individual particle initially
positioned at zero that executes a Brownian motion until a unit-mean,
independent and exponentially distributed time at which point it
divides into two particles. Each of these particles behaves
independently of one another and their parent and undergoes the same
life-cycle as the initial ancestor but space--time shifted to its point
of creation. The process thus propagates via the obvious iteration of
this procedure.

Let $(V_1(t), V_2(t), \ldots , V_{N(t)})$ denote the positions of the
particles at time $t$ in the BBM.
Neveu \cite{neveu} (see also \cite{chauvin}, Theorem~3.1) shows that
$\psi\dvtx\R\mapsto[0,1]$ is a solution of (\ref{E: classicalTW}) 
with speed $|c|\geq\sqrt{2}$ if and only if
%
\begin{equation}
M_t = \prod_{i=1}^{N(t)} \psi\bigl(x+V_i(t) -c t \bigr)
\label{E: classical prod mart}
\end{equation}
is a martingale for all $x\in\R$.

A natural extension of this model is to replace the BBM by a
continuous-time BRW. This process can be described as follows: A
particle initially positioned at the origin lives for a unit mean,
exponentially distributed length of time and upon dying it scatters a
random number of offsprings in space relative to its own position
according to the point process whose atoms are $\{\zeta_i, i=1,\ldots
\}$. Each of these particles then iterates this same procedure,
independently from each other and from the past, starting from their
new positions. As stated above, let $(V_1(t), V_2(t), \ldots ,
V_{N(t)})$ denote the positions of the particles at time $t$.

This was, for instance, studied by Kyprianou in \cite{kyp99} where it
was shown that
in this context the analogue of (\ref{E: classicalTW}) 
is
%
\begin{eqnarray}
\label{E: BRW TW }
-c \psi'(x) +  \biggl( E  \biggl[\prod_{i} \psi(\zeta_i +x)  \biggr]
- \psi(x)  \biggr) &=&0, \nonumber
\\[-8pt]
\\[-8pt]
 \psi(-\infty)&=&0 , \qquad   \psi(\infty)=1
\nonumber
\end{eqnarray}
and that
%
\begin{equation}
M_t = \prod_{i=1}^{N(t)} \psi\bigl(x+V_i(t) -ct\bigr)
\label{E: BRW prod mart}
\end{equation}
is a martingale for all $x\in\R$ if and only if $\psi$ is a solution
to (\ref{E: BRW TW }) with speed $c$.

In the case of discrete time BRW, (\ref{E: BRW TW }) 
reduces to the so-called smoothing transform and a similar result holds
(see, e.g., \cite{biggins,BK97,kyp88,liu,durlig}).

Given that homogeneous fragmentations can be seen as generalized BRWs,
it is natural to formulate analogous results in this wider setting.
To derive the analogue of (\ref{E: classical}), let us consider the
following intuitive reasoning where, for simplicity, we assume that
$\nu$ is a finite dislocation measure.

The classical technique for solving (\ref{E: classical}) with initial
condition $u(x,0) = g(x)$ consists of showing that
\[
u(x,t): = \mathbb{E} \Biggl(\prod_{i=1}^{N(t)} g\bigl(x+V_i(t)\bigr) \Biggr)
\]
%
is a solution. We take the same approach for fragmentations, where the
role of the position $V_i(t)$ is played by $\{-{\log}|\Pi_i(t)|\dvtx i\geq
1\}$. It is easily seen from the fragmentation property in Definition
\ref{fragdef} that
\[
u(x,t+h) = \mathbb{E} \Biggl(\prod_{i=1}^{N(t)} u\bigl(x-{\log}|\Pi_i(h)|,
t\bigr) \Biggr).
\]
Recall also from the definition of a fragmentation process that in each
infinitesimal period of time $h$, each block independently experiences
a dislocation, fragmenting into smaller blocks of relative size $(s_1,
s_2, \ldots )\in\nabla_1$ with probability $\nu(ds)h +o(h)$ and more
than one split dislocation occurs with probability $o(h)$. Roughly
speaking, it follows that as $h\downarrow0$,
\begin{eqnarray*}
&& u( x, t+h) - u(x, t)\\
&& \qquad =
\mathbb{E} \biggl(\prod_i u\bigl(x-{\log}|\Pi_i(h)|, t\bigr) \biggr)- u( x, t)
\\
&& \qquad =\int_{\nabla_1}
 \biggl\{
\prod_{i}u(x - \log s_i,t )-u(x,t)
 \biggr\}\nu(ds) h +o(h).
\end{eqnarray*}
This suggestively leads us to the parabolic integro-differential equation
%
\begin{equation}
\frac{\partial u}{\partial t}(x, t) = \int_{\nabla_1} \biggl \{ \prod
_i u(x - \log s_i , t) - u(x,t) \biggr\}\nu(ds)
\label{E: defn-FKPP}
\end{equation}
with initial condition $u(x,0) = g(x)$,
which we now formally identify as the analogue to the FKPP equation,
even for the case that $\nu$ is an infinite measure.

A traveling wave $\psi\dvtx \R\mapsto[0,1]$ of (\ref{E: defn-FKPP})
with wave speed $c\in\mathbb{R}$, that is, $u(x,t) = \psi(x+ct)$,
therefore solves
%
\begin{equation}
-c\psi'(x) + \int_{\nabla_1}\biggl\{\prod_{i}\psi(x- \log s_i) -\psi
(x)\biggr\}\nu(ds) =0, \qquad    x\in\mathbb{R}.\label{E: integ dif equ}
\end{equation}
%

\subsection{Additive martingales}\label{sec2.3}

In the setting of both BBM and BRW, a key element to the analysis of
(\ref{E: classicalTW}) and (\ref{E: BRW TW }) has been the study of
certain ``additive'' martingales; cf. \cite
{mckeanapplicationbbmkpp,neveu,chauvin,BK97,Ha,kypAIHP}.
In the current setting, these martingales are defined as follows:
\begin{definition}
For all $p >\underline{p}$ we define the \textit{additive martingale}
\[
W(t ,p ):= e^{\Phi(p)t}\sum_{i}|\Pi_i(t)|^{p+1},  \qquad   t\geq0,
\]
and the (critical) \textit{derivative martingale}
\[
\partial W(t,{\op}) := -\sum_{i} \bigl(t\Phi'({\op}) +{\log}|\Pi
_i(t)| \bigr)e^{\Phi({\op})t}|\Pi_i(t)|^{{\op}+1},  \qquad   t\geq0.
\]
\end{definition}

The fact that the processes in the above definition are martingales is
not difficult to establish; we refer the reader to Bertoin and Rouault
\cite{BeRo} for further details. As we shall see,
consistently with the case of BBM and BRW, it is the limits of these
two martingales for certain parameter regimes in $p$ which play a
central role in the analysis of
(\ref{E: integ dif equ}).
Note that the additive martingale is positive and therefore, converges
almost surely. The derivative martingale, so called because it is
constructed from the derivative in $p$ of the additive martingale, is a
signed martingale and it is not   a priori  clear that it converges
almost surely. Moreover, when limits for either of the two martingales
exist, it is also not clear if they are nontrivial. The following
theorem, lifted from Bertoin and Rouault \cite{BeRo}, addresses
precisely these questions.

\begin{theorem} \label{T:mgcgce}
{\smallskipamount=0pt\begin{longlist}[(ii)]
\item[(i)] If $p\in(\underline{p}, {\op})$ then $W(t,p)$ converges
almost surely and in mean to its limit, $W(\infty, p)$, which is
almost surely strictly positive. If $p\geq{\op}$ then $W(t,p)$
converges almost surely to zero.
\item[(ii)] If $p={\op}$ then $\partial W(t,{\op})$ converges almost
surely to a nontrivial limit, $ \partial W(\infty, {\op})$, which is
almost surely strictly positive and has infinite mean.
\end{longlist}}
\end{theorem}

\begin{definition}\label{deltadef}
Henceforth, we shall define $\Delta_p = W(\infty,p)$ if $p\in
(\underline{p}, \overline{p})$ and $\Delta_{\overline p} = \partial
W(\infty, \overline{p})$.
\end{definition}

\subsection{\texorpdfstring{Idea of the proof of Theorem \protect\ref{T: main
thm}}{Idea of the proof of Theorem 1}}
\label{sec2.4}

Let us briefly discuss in informal terms the proof of Theorem~\ref{T: main thm}. For
convenience, we shall define the integro-differential operator
\[
\mathcal{A}_p\psi(x) = -c_p \psi'(x) + \int_{\nabla_1}\biggl\{\prod
_{i}\psi(x- \log s_i) -\psi(x)\biggr\}\nu(ds)
\]
whenever the right-hand side is well defined.
Observe that the second term in the operator $\mathcal{A}_p$ is a jump
operator driven by $\nu$ and is thus very close to the generator of
the fragmentation itself.
Indeed, $\mathcal{A}_p$ is the generator of the fragmentation process
with dislocation measure $\nu$ and erosion coefficient $c_p$ (albeit
that the latter may take negative values).

Moreover, we shall say that $\psi_p\in\TT_2(p)$ is a \textit{multiplicative martingale function} if
(\ref{E:prod mart}) is a martingale. The equivalence of the analytical
property $\mathcal{A}_p\psi_p \equiv0$ and the probabilistic
property of $\psi_p$ being a multiplicative martingale function
emerges from a classical Feynman--Kac representation as soon as one can
apply the appropriate stochastic calculus (which in the current setting
is necessarily driven by the underlying Poisson random measure used to
define the fragmentation process) in order to give the semi-martingale
representation of
$M(t,p,x)$.

To show the unique form of solutions in $\TT_2(p)$, we start by
studying the asymptotics of multiplicative martingale functions. We
shall elaborate slightly here for the case $p \in(\underline{p},\op)$.
To this end, consider $\psi_p \in\TT_2(p)$ a multiplicative
martingale function which makes (\ref{E:prod mart}) a martingale. As
$M$ is a uniformly integrable martingale $\psi_p(x) = M(0,p ,x)
=E(M(\infty,p,x)).$ Our objective is, therefore, to understand in more
detail the martingale limit $M(\infty, p, x)$. Taking logs of the
multiplicative martingale we see that
\[
-\log M(t, p, x) = - \sum_i \log\psi_p \bigl(x-{\log}|\Pi_i(t)| -c_p t\bigr).
\]
Recall that $ L_p(x)= e^{(p+1)x} (1 -\psi_p(x) ).$ Hence, if we
replace $-\log x$ by $(1-x)$ (assuming that the arguments of the log
are all asymptotically close to 1 for large~$t$) and multiply by
$e^{(p+1)x}$, we see that
%
\begin{equation}
 \hspace*{30pt} -e^{(p+1)x} \log M(t, p, x)
\approx
\sum_i |\Pi_i(t)|^{p+1}e^{\Phi(p)t } L_p\bigl(x-{\log}|\Pi_i(t)| -c_p t\bigr)
\label{E: make the sub}
\end{equation}
as $t\uparrow\infty$.
We know, however, that the $\sum_i |\Pi_i(t)|^{p+1}e^{\Phi(p)t } $
is the additive martingale $W(t,p)$ which converges almost surely. On
the other hand, for each fixed $i \in\N$ we know that $-\log(|\Pi
_i(t)|)-c_p t$ is Bertoin's tagged fragment subordinator minus some
drift so it is a L\'evy process with no negative jumps. Accordingly,
under mild assumptions, it respects the law of large numbers
\[
-\log(|\Pi_i(t)|)-c_p t\sim \alpha_p t
\]
as $t\uparrow\infty$ for some constant $\alpha_p$ which can be shown
to be positive.
Heuristically speaking, it follows that, if we can substitute
$L(x+\alpha_p t)$ in place of $L_p(x-{\log}|\Pi_i(t)| -c_p t)$ in
(\ref{E: make the sub}), then we can ``factor out'' the $L_p$ terms to get
\[
\frac{-e^{(p+1)x} \log M(t, p, x)}{W(t,p)} \approx L_p(x+\alpha_p t)
\]
as $t\uparrow\infty$
and since both martingale limits $M(\infty, p, x)$ and $W(\infty, p)$
are nontrivial, we deduce that $\lim_{z\to\infty} L_p(z) =k_p$ for
some constant $k_p\in(0,\infty)$. A direct consequence of this is
that, irrespective of the multiplicative martingale function $\psi_p$
that we started with, the $L^1$ limit $M(\infty, p, x)$ is always
equal (up to an additive constant in $x$) to $\exp\{ - e^{-(p+1)x}
W(\infty, p)\}$. The stated uniqueness of $\psi_p$ follows immediately.

Clearly this argument cannot work in the case $p=\overline p $ on
account of the fact that Theorem \ref{T:mgcgce} tells us that
$W(\infty, p)=0$ almost surely. Nonetheless, a similar, but more
complex argument in which a comparison of the $L^1$ martingale limit
$M(\infty, \overline p ,x)$ is made with the derivative martingale
limit $\partial W(\infty, \overline p )$ is possible.

One particular technical difficulty in the above argument is that one
needs to uniformly control the terms $L_p (x-{\log}|\Pi_i(t)| -c_p t)$
in order to ``factor out'' the common approximation $L_p(x+\alpha_p t)$.
One could try to control the position of the left-most particle to do
this (as in \cite{Ha}), however, this turns out to be inconvenient and
another technique, namely, projecting the martingales along the
so-called stopping-lines, appears to work better. This is a classical
idea in the context of BBM (see, e.g., Neveu \cite{neveu} and Kyprianou
\cite{kypAIHP}) and BRW (see, e.g., Biggins and Kyprianou \cite{BK97,BK2005}).

Thanks to their uniform integrability, the martingales
$M(t,p,x)$ and\break $W(t, p,x)$ may be seen as the projection of their limits
on to $\mathcal{F}_t$, the information generated by the fragmentation
tree when it is ``cut'' at fixed time $t$. However, we could also
project these limits back on to filtrations generated by the
fragmentation tree up to different ``increasing'' sequences of ``cuts''
or \textit{stopping lines} as they turn out to be known.
To give an example, a particular instance of an ``increasing sequence''
of stopping line is studied by Bertoin and Martinez \cite{BeMa} who
\textit{freeze} fragments as soon as their size is smaller than a certain
threshold $e^{-z}, z\ge0$. After some time all fragments are smaller
than this threshold and the process stops, thereby ``cutting'' through
the fragmentation tree. The collection of fragments one gets in the
end, say $(\Pi_i(\ell^z)\dvtx i \in\N)$,
generates another filtration $\{\mathcal{G}_{\ell^z} \dvtx z\geq0\}$.
Projecting the martingale limits $M(\infty, p, x)$ and $W(\infty, p,
x)$ back on to this filtration produces two new uniformly integrable
martingales which have the same limits as before and which look the
same as $M(t, p, x)$ and $W(t, p)$, respectively, except that the role
of $(\Pi_i(t)\dvtx i \in\N)$ is now played by $(\Pi_i(\ell^z) \dvtx i \in
\N)$. Considering the case $p=0$, so that $c_p = 0$, one could now
rework the heuristic argument given in the previous paragraphs for this
sequence of stopping lines and take advantage of the uniform control
that one now has over the fragment sizes on $(\Pi_i(\ell^z)\dvtx i \in\N)$.
A modification of this line of reasoning with a different choice of
stopping line when $p\neq0$ can and will be used as a key feature in
the proof of our main theorem.

\subsection{Organization of the paper}\label{sec2.5}

The rest of the paper is organized as follows. In Section \ref{S:
existence} we show that the function $\psi_p$ given by (\ref{E:
unique candidate for psi})
\[
\psi_p(x) = \E\bigl(\exp\bigl\{-e^{-(p+1)x} \Delta_p\bigr\}\bigr)
\]
indeed makes
\[
M(t,p,x) = \prod_{i} \psi_p\bigl(x -{\log}|\Pi_i(t)| -c_p t\bigr), \qquad
t\geq0,
\]
a martingale and that it belongs to $\TT_2(p)$.

Section \ref{S: stop line} introduces the notions and tools related to
martingale convergence on stopping lines. More precisely, we generalize
the notion of \textit{frozen fragmentation} defined in \cite{BeMa} to
other stopping lines which allow us to study the whole range of wave
speeds. One particular feature which will emerge in this section will
be the fact that, for certain parameter ranges, the stopping lines we
consider will sweep out a Crump--Mode--Jagers (CMJ) process embedded
within the fragmentation process.

A central result of the proof is given in Section \ref{S: LLN} where
we provide a law of large numbers for the empirical distribution of the
sizes of the blocks on the stopping lines defined in Section~\ref{sec4}. A key
tool here is the connection between these stopped fragmentations and
the aforementioned embedded CMJ processes which allows us to use
classical results from Nerman \cite{nerman}.

Next, in Section \ref{S: exact}, we show that $\psi_p(x) = \E(\exp
(-e^{-(p+1)x} \Delta_p)) $ 
are the only functions in $\TT_2(p)$ for $(\underline p, \overline p]$
which make $M$ a product martingale.
We conclude by showing that on the one hand this function $\psi_p$
solves (\ref{E: integ dif equ}) and on the other hand that, if $\psi
_p \in\TT_2(p)$ solves (\ref{E: integ dif equ}), then it makes $M$ a
martingale.

\section{Existence of multiplicative martingale functions}\label{S:
existence}

Our main objective here is to establish the range of speeds $c$ in
(\ref{E:prod mart}) for which multiplicative martingale functions exist.
In short, the existence of multiplicative martingale functions follows
directly from the existence of a nontrivial limit of the martingales
$W(\cdot, p)$ for $p\in(\underline p, \overline p)$ and $\partial
W(\cdot, \overline p)$. We begin with a classification of possible
speeds. We shall call wave speeds $c$ \textit{sub-critical} when $c\in
(c_{\underline{p}}, c_{{\op}})$, \textit{critical} when $c= c_{{\op}}$
and \textit{super-critical} when $c>c_{{\op}}$.
\eject

\begin{theorem} \label{existence}\label{T: existence}
{\smallskipamount=0pt
\begin{longlist}[(ii)]
\item[(i)] At least one multiplicative martingale function [which
makes $(\ref{E:prod mart})$ a martingale] exists in $\mathcal{T}_1$
(the set of montone, $C^1$ functions with limit $0$ in $-\infty$ and
$1$ in $+\infty)$ for all wave speeds $c\in(c_{\underline{p}},
c_{{\op}}]$.
\item[(ii)] For all wave speeds $c>c_{{\op}}$ there is no
multiplicative martingale function in $\mathcal{T}_1.$
\end{longlist}}
\end{theorem}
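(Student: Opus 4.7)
For part (i), I will verify that the candidate $\psi_p$ already appearing in Theorem \ref{T: main thm} does the job. Given $c \in (c_{\underline{p}}, c_{\overline{p}}]$, the strict monotonicity of $p \mapsto c_p$ on $(\underline{p}, \overline{p}]$ (which follows from the paper's observation $(p+1)\Phi'(p) - \Phi(p) > 0$ on $(\underline p, \overline p)$) yields a unique $p \in (\underline p, \overline p]$ with $c_p = c$, and I set $\psi_p(x) = \E[\exp\{-e^{-(p+1)x}\Delta_p\}]$. That $\psi_p \in \TT_2(p) \subset \TT_1$ is a routine verification: monotonicity in $x$ is immediate; the boundary values $0$ and $1$ come from dominated convergence using $0 < \Delta_p < \infty$ a.s.\ (Theorem \ref{T:mgcgce}); $C^1$ smoothness follows by differentiating under the expectation (legitimate since $u \mapsto u e^{-u}$ is bounded); and monotonicity of $L_p$ reduces to showing $u \mapsto (1 - e^{-u\Delta})/u$ is non-increasing in $u$, which in turn reduces to the elementary inequality $(v+1)e^{-v} \leq 1$ for $v \geq 0$.

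The martingale property of $M(t,p,x)$ will follow from the branching self-similarity
\[
\Delta_p = \sum_i |\Pi_i(t)|^{p+1}\, e^{\Phi(p)t}\, \Delta_p^{(i)} \quad \text{a.s.,}
\]
where the $\Delta_p^{(i)}$ are iid copies of $\Delta_p$, independent of $\F_t$, arising as martingale limits along the independent fragmentations descending from the blocks of $\Pi(t)$. For $p \in (\underline{p}, \overline{p})$ this identity falls out of decomposing $W(\cdot, p)$ at time $t$ and sending time to infinity; at $p = \overline{p}$ the analogous decomposition of $\partial W(\cdot, \overline{p})$ combined with $W(s, \overline{p}) \to 0$ as $s \to \infty$ produces the same identity. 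Using $(p+1)c_p = \Phi(p)$ and conditioning on $\F_t$ then yields
\[
M(t,p,x) = \E\bigl[\exp\{-e^{-(p+1)x}\Delta_p\} \mid \F_t\bigr],
\]
which is a bounded Doob martingale, settling (i).

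For part (ii), I proceed by contradiction: assume some $\psi \in \TT_1$ makes $M(t) = \prod_i \psi(x - \log|\Pi_i(t)| - ct)$ a martingale for some $c > c_{\overline{p}}$. The crucial input is the Malthusian-speed law of large numbers for the largest block,
\[
\frac{1}{t}\min_i \bigl(-\log|\Pi_i(t)|\bigr) \longrightarrow c_{\overline{p}} \quad \text{almost surely},
\]
a classical fragmentation result (Bertoin, Bertoin--Rouault) analogous to the leftmost-particle speed in BBM. Given $c > c_{\overline{p}}$, this forces $\min_i(-\log|\Pi_i(t)|) - ct \to -\infty$ a.s., so by monotonicity of $\psi$, the boundary value $\psi(-\infty) = 0$, and the fact that every factor of $M$ lies in $[0,1]$,
\[
0 \leq M(t) \leq \psi\bigl(x + \min_i(-\log|\Pi_i(t)|) - ct\bigr) \longrightarrow 0 \quad \text{a.s.}
\]
Bounded (hence uniformly integrable) martingale convergence then forces $\psi(x) = M(0) = \E[M(\infty)] = 0$ for every $x$, contradicting $\psi(\infty) = 1$. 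The main obstacle here is the $\limsup$ half of the LLN: the $\liminf$ bound is a one-line Markov / Borel--Cantelli argument using $\E[\sum_i|\Pi_i(t)|^{p+1}] = e^{-\Phi(p)t}$, whereas the $\limsup$ is more delicate and rests on the non-triviality of the additive martingale limit $W(\infty, p) > 0$ for $p \in(\underline p, \overline{p})$.
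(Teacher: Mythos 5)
Your proposal is correct and follows essentially the same route as the paper: part (i) produces the Laplace-transform candidate $\psi_p(x)=\E[\exp\{-e^{-(p+1)x}\Delta_p\}]$, derives the fixed-point identity $\psi_p(x)=\E\prod_i\psi_p(x-\log|\Pi_i(t)|-c_pt)$ from the branching decomposition of $W(\cdot,p)$ (resp.\ $\partial W(\cdot,\overline p)$ together with $W(\infty,\overline p)=0$), and part (ii) dominates $M(t)$ by the single factor at the largest block and invokes the a.s.\ speed $-\log|\Pi_1(t)|^{\downarrow}/t\to c_{\overline p}$. The only cosmetic difference is that the paper deduces positivity of $\Delta_p$ from the fixed-point identity itself rather than quoting Theorem \ref{T:mgcgce}, and proves $\psi_p\in\TT_2(p)$ separately via a Feller completely-monotone argument (Proposition \ref{P: in T2}) whereas you give the equivalent elementary inequality $(v+1)e^{-v}\le 1$; membership in $\TT_2(p)$ is in any case more than Theorem \ref{existence} requires.
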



Before proceeding with the proof, we make the following observation.

\begin{rem}\label{rem1}
As we suppose that $\nu$ is conservative, we have that $\Phi(0)=0$
and ${\op} >0, \underline p \in[-1,0].$ The map $p \mapsto\Phi(p)$
can have a vertical asymptote at $\underline p$ or a finite value with
finite or infinite derivative.

The above theorem does not consider the case $c < c_{\underline{p}}$.
Observe, however, that as soon as $ \lim_{p \searrow\underline{p} }
\Phi(p) = -\infty$, we have $c_{\underline{p}} = - \infty$, so this
case is void. When $\Phi(\underline{p} +) >-\infty$, we have to look
at the right-derivative of $\Phi$ at $\underline{p}$. If $\Phi
'(\underline{p}+)<\infty$, then the fragmentation is necessarily
finite activity (i.e., $\nu$ has finite mass) and dislocations are
always finite (see \cite{berest2}). Hence, we are dealing with a usual
continuous-time BRW for which the results are available in the
literature. We leave open the interesting case where $\Phi(\underline
{p} +) >-\infty$ but $\Phi'(\underline{p}+)=\infty.$

The constants ${\op}$ and $ \underline{p}$ also play an important
role in \cite{berest2}. More precisely, following Bertoin \cite{Be4}
it was shown in \cite{berest2} that one could find fragments decaying
like $t \mapsto e^{-\lambda t}$ provided that $\exists p \in
(\underline{p}, {\op}]$ (the case $p=\underline{p}$ being
particular) such that $\lambda= \Phi'(p).$ Hence, the set of
admissible wave speeds $\{ c_p \dvtx p \in(\underline p, {\op}] \}$ is
closely related to the set of admissible speeds of fragmentation.
However, the only $p$ such that $c_p = \Phi'(p)$ is ${\op}.$
\end{rem}

\begin{pf*}{Proof of Theorem \ref{existence}} First, let us assume
that $c<c_{{\op}}$.
Since
\[
W(t+s, p) = \sum_{i}e^{\Phi(p)t}|\Pi_i(t)|^{p+1}W^{(i)}(s,p ),
\]
where $W^{(i)}(\cdot, p )$ are i.i.d. copies of $W(\cdot )$ independent
of $\Pi(t)$, it follows that taking Laplace transforms and then limits
as $s\uparrow\infty$ and that if we take $\psi_p$ as in (\ref{E:
unique candidate for psi})
\[
\psi_p(x) = \mathbb{E}\bigl(\exp\bigl\{-e^{-(p+1)x} W(\infty, p)\bigr\}\bigr),
\]
then
%
\begin{eqnarray}
\label{prob0}
\psi_p(x) &=& \mathbb{E}\biggl(\prod_{i}\exp\bigl\{ -e^{-(p+1)x} e^{\Phi
(p)t}|\Pi_i(t)|^{p+1}W^{(i)}(\infty,p )\bigr\}\biggr)\nonumber
\\[-8pt]
\\[-8pt]
&=&
\mathbb{E}\prod_{i}\psi_p\bigl(x- {\log}|\Pi_i(t)| -c_pt\bigr).
\nonumber
\end{eqnarray}
Hence, we see that $\psi_p$ is a multiplicative martingale function
with wave speed $c_p.$

Note the fact that $\psi_p\in C^1(\mathbb{R})$ follows by dominated
convergence.
Note also that from the definition of $\psi_p$ it follows
automatically that $\psi_p(\infty)=1$. Moreover, we know from Theorem
\ref{T:mgcgce} that $\mathbb{P}(W(\infty, p )>0)=1$ so $\psi
_p(-\infty)=0$ and $\psi_p \in\TT_1.$

Next, assume that $c=c_{{\op}}$. The method in the previous part of
the proof does not work because, according to the conclusion of Theorem
\ref{T:mgcgce}(i), $W(\infty,{\op})=0$ almost surely. In that case,
it is necessary to work instead with the derivative martingale. Using
the conclusion of Theorem \ref{T:mgcgce}(ii) we note that by
conditioning on the $\mathcal{F}_t$, with $\partial W^{(i)}(\infty,
{\op})$ as i.i.d. copies of $\partial W(\infty, {\op})$, we have
appealing to similar analysis to previously
\begin{eqnarray*}
\partial W(\infty, {\op})& =& -\sum_{i} \bigl(t\Phi'({\op}) +{\log}
|\Pi_i(t)| \bigr)e^{\Phi({\op})t}|\Pi_i(t)|^{{\op}+1}
W^{(i)}(\infty, {\op}) \\
&&{}+ \sum_{i}e^{\Phi({\op})t}|\Pi_i(t)|^{{\op}+1}\,\partial
W^{(i)}(\infty, {\op})\\
&=&\sum_{i}e^{\Phi({\op})t}|\Pi_i(t)|^{{\op}+1}\,\partial
W^{(i)}(\infty, {\op}),
\end{eqnarray*}
where the second equality follows, since $W(\infty, {\op})=0$ and, as
above, the Theorem~\ref{T:mgcgce} tells us that $\mathbb{P}(\partial
W(\infty, \overline p)>0)=1$.

Now assume that $c>c_{{\op}}$. The following argument is based on
ideas found in \cite{Ha}.
Suppose that $|\Pi_1(t)|^\downarrow$ is the largest fragment in the
process $|\Pi|$. Then, as alluded to in the \hyperref[intro]{Introduction}, we know that
%
\begin{equation}
\lim_{t\uparrow\infty} \frac{-{\log}|\Pi_1(t)|^\downarrow}{t} =
\Phi'({\op} ) = c_{\op}
\label{left-most-speed}
\end{equation}
almost surely. See \cite{Be4,berest2}.

Suppose that a multiplicative martingale function $\psi\in\mathcal
{T}_1$ exists within this regime. Set $c >c_{{\op}}$. It follows by
virtue of the fact that $\psi$ is bounded in $(0,1]$ that
%
\begin{equation}
\psi(x)=\mathbb{E}\prod_{i} \psi\bigl(x- {\log}|\Pi_i(t)| -ct\bigr)\leq
\mathbb{E}\psi\bigl(x-{\log}|\Pi_1(t)|^\downarrow -ct\bigr)
\label{takelimits}
\end{equation}
for all $t\geq0$. From Remark~\ref{rem1} on the rate of decay of $|\Pi
_1(t)|^{\downarrow}$, we can easily deduce that
\[
-{\log}|\Pi_1(t)|^\downarrow -ct \rightarrow-\infty
\]
almost surely as $t\uparrow\infty$ since $ c_{{\op}}-c<0$. Taking
limits in (\ref{takelimits}) as $t\uparrow\infty$, we deduce by
dominated convergence and the fact that $\psi(-\infty)=0,$ that $\psi
(x)=0$ for all $x\in\mathbb{R}$. This contradicts the assumption that
$\psi$ is in $\TT_1.$
\end{pf*}

Recall that $\TT_2 (p)= \{ \psi\in\TT_1 \dvtx L_p(x) =e^{(p+1)x}
(1-\psi(x))  $ is monotone increas\-ing$  \}.$

\begin{proposition}\label{P: in T2}
Fix $p \in(\underline{p},\op]$. Define $\psi_p(x) = \mathbb
{E}(\exp\{-e^{-(p+1)x}\Delta_p\})$,
then  $\psi_p \in\TT_2(p).$
\end{proposition}

\begin{pf}
We have already established that $\psi_p \in\TT_1.$ Consider $\psi
_p(x)= \mathbb{E}\exp\{ - e^{-(p+1)x } \Delta_p\}$ for some
nonnegative and nontrivial random variable $\Delta_p$. Write $y =
e^{-(p+1)x}$. From Feller \cite{feller}, Chapter XIII.2, it is known
that $[1-\psi_p(-(1+p)^{-1}\log y)]/y$ is the Laplace transform of a
positive measure and hence, is decreasing in $y$. In turn, this implies
that $L_p(x)$ is an increasing function in $x$.
\end{pf}

\section{Stopping lines and probability tilting} \label{S: stop line}\label{sec4}

The concept of \textit{stopping line} was introduced by Bertoin \cite
{Be2} in the context of fragmentation processes, capturing in its
definition the essence of earlier ideas on stopping lines for branching
processes coming from the work of Neveu \cite{neveu}, Jagers \cite
{jagers}, Chauvin \cite{chauvin} and Biggins and Kyprianou \cite
{BK97}. Roughly speaking, a stopping line plays the role of a stopping
time for BRWs. The tools and techniques we now introduce also have an
intrinsic interest in themselves and cast a new light on some earlier
results by Bertoin.

\subsection{Stopping lines}\label{sec4.1}

The following material is taken from \cite{Be2}.
Recall that for each integer $i \in\N$ and $s \in\R_+$, we denote
by $B_i(s)$ the block of $\Pi(s)$ which contains $i$ with the
convention that $B_i(\infty)=\{i\}$ while $\Pi_i(t)$ is the $i$th
block by order of least element. Then we write
\[
\G_i(t) = \sigma\bigl(B_i(s) , s\le t\bigr)
\]
for the sigma-field generated by the history of that block up to time
$t$. We will also use the notation $\G_t = \sigma\{ \Pi(s), s\le t\}
$ for the sigma-field generated by the whole (partition-valued)
fragmentation. Hence, obviously, for all $t\ge0$ we have $\F_t :=
\sigma\{ |\Pi(u)|, u \le t\} \subset\G_t$ and for each $i \in\N,
\G_i(t) \subset\G_t.$

\begin{definition}
We call stopping line a family $\ell=(\ell(i) , i\in\N)$ of random
variables with values in $[0, \infty]$ such that for each $i \in\N$:
\begin{longlist}[(ii)]
\item[(i)] $\ell(i)$ is a $(\G_i(t))$-stopping time.
\item[(ii)] $\ell(i)=\ell(j)$ for every $j \in B_i(\ell(i)).$
\end{longlist}
\end{definition}

For instance, first passage times such as $\ell(i) = \inf\{ t\ge0 \dvtx
|B_i(t)| \le a \}$ for a fixed level $a\in(0,1)$ define a stopping line.

The key point is that it can be checked that the collection of blocks
$\Pi(\ell)=\{ B_i(\ell(i)) \}_{i \in\N}$ is a partition of $\N$
which we denote by $\Pi(\ell) = (\Pi_1(\ell), \Pi_2(\ell),\ldots
),$ where, as usual, the enumeration is by order of least element.

Observe that because $B_i(\ell(i))=B_j(\ell(j))$ when $j \in B_i(\ell
(i))$, the set\break  $\{ B_i(\ell(i)) \}_{i \in\N}$ has repetitions, $(\Pi
_1(\ell), \Pi_2(\ell),\ldots )$ is simply a way of enumerating each
element only once by order of discovery. In the same way the $\ell
(j)$'s can be enumerated as $\ell_i, i=1,\ldots, $ such that for each
$i, \ell_i$ corresponds to the stopping time of $\Pi_i(\ell).$

If $\ell$ is a stopping line, it is not hard to see that both $\ell+t
:= (\ell(i)+t, i \in\N)$ and $\ell\wedge t := (\ell(i) \wedge t, i
\in\N)$ are also stopping lines. This allows us to define
\[
\Pi\circ\theta_\ell(t) : = \Pi(\ell+t)
\]
and the sigma-field $\G_\ell= \bigvee_{i \in\N} \G_i(\ell(i)).$

The following lemma (see \cite{Be2}, Lemma~3.13) can be seen as the
analogue of the strong Markov property for branching processes; it is
also known as the Extended Fragmentation Property; cf. Bertoin \cite
{Be3}, Lemma~3.14. 
\begin{lemma}\label{L:Markov}
Let $\ell=(\ell(i), i\in\N)$ be a stopping line, then the
conditional distribution of $\Pi\circ\theta_\ell\dvtx t \mapsto\Pi
(\ell+t)$ given $\G_\ell$ is $\P_{\pi}$ where $\pi= \Pi(\ell).$
\end{lemma}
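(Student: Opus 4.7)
The plan is to follow the classical ``discretise and pass to the limit'' route used for the ordinary strong Markov property, adapted to the branching structure of $\Pi$. First I would reduce the statement to partitions of $[n]$ by working with $\Pi^{(n)}(t):=\Pi(t)_{|[n]}$, which is a continuous-time Markov chain on the finite set $\PP_n$: since the laws $\{\P_\pi\}$ on $\PP$ are constructed through their compatible restrictions, establishing the claim for each $\Pi^{(n)}$ suffices by Kolmogorov extension. This reduction is the key technical simplification, because it replaces the countable family $(\ell(i))_{i\in\N}$ by at most $n$ distinct stopping times, one per block of $\Pi^{(n)}(\ell)$.

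At the level of $\Pi^{(n)}$, I would introduce the dyadic approximation $\ell_k(i):=2^{-k}\lceil 2^k \ell(i)\rceil$. Each $\ell_k(i)$ is still a $\G_i(\cdot)$-stopping time, and the family $(\ell_k(i),\, i\in\N)$ is again a stopping line because the constancy condition $\ell(i)=\ell(j)$ on $B_i(\ell(i))$ is preserved under a common rounding. On any event of the form $\{\ell_k(i_1)=r_1,\ldots,\ell_k(i_m)=r_m\}$, where $i_1,\ldots,i_m$ is a choice of representatives of the distinct blocks of $\Pi^{(n)}(\ell_k)$ and $r_1\le\cdots\le r_m$ is a tuple of dyadic values, one applies the ordinary fragmentation property at the deterministic times $r_1,\ldots,r_m$ in succession: the block indexed by $i_j$ evolves after time $r_j$ as an independent copy of $\Pi$ restricted to that block, independently of what happens in the other representative blocks. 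Summing over the countably many such events (and over choices of representatives) yields the strong Markov statement for $\ell_k$, namely that conditionally on $\G_{\ell_k}$, $\Pi\circ\theta_{\ell_k}$ has law $\P_{\Pi(\ell_k)}$.

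The main obstacle is the passage to the limit $k\to\infty$. Since $\ell_k(i)\downarrow\ell(i)$ and $\Pi$ is c\`adl\`ag in the ultrametric on $\PP$, one has $\Pi^{(n)}(\ell_k)\to\Pi^{(n)}(\ell)$ almost surely, and the finite-dimensional distributions of $\Pi\circ\theta_{\ell_k}$ converge to those of $\Pi\circ\theta_\ell$. One must combine this with the right-continuity of each $\G_i(\cdot)$ (inherited from the c\`adl\`ag structure of $\Pi$) to identify $\bigcap_k \G_{\ell_k}$ with $\G_\ell$; this is the delicate point, since join and intersection of sigma-fields do not commute in general, but the finiteness of the number of relevant blocks at level $n$ keeps the argument manageable. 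The conditional laws $\P_{\Pi^{(n)}(\ell_k)}$ then pass to the limit on cylinder sets in $\PP_n$ because $\pi\mapsto\P_\pi$ is continuous on the finite state space $\PP_n$, and projecting back via Kolmogorov extension delivers the full statement for $\Pi$.
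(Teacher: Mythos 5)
The paper does not prove this lemma: it is stated with an explicit citation to Bertoin (Lemma 3.13 in \cite{Be2}, equivalently Lemma 3.14 in \cite{Be3}), and the authors invoke it as a black box. There is therefore no ``paper's own proof'' against which to compare yours. What one can say is that your route --- restrict to $[n]$, replace $\ell$ by its dyadic upper approximation $\ell_k$, apply the simple (fixed-time) fragmentation property along the finitely many dyadic values that can be taken, then let $k\uparrow\infty$ using right-continuity of the paths and of the filtrations --- is precisely the standard discretisation strategy used by Bertoin to establish this extended branching property, so in that sense you are on the canonical path.

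Two points would need to be tightened if this were to become a complete proof. First, the ``apply the fragmentation property at $r_1,\ldots,r_m$ in succession'' step hides real work: the event $\{\ell_k(i_1)=r_1,\ldots,\ell_k(i_m)=r_m\}$ is not $\mathcal{F}_{r_j}$-measurable for any single $j$, so the independence you want is obtained by peeling off blocks one at a time in increasing order of $r_j$, conditioning successively on $\mathcal{F}_{r_1}\subset\mathcal{F}_{r_2}\subset\cdots$ and using the simple branching property at each stage together with the fact that $\{\ell_k(i_j)=r_j\}\in\mathcal{G}_{i_j}(r_j)$. This is correct but deserves to be written out, since it is where the stopping-line structure (and in particular the constancy condition $\ell(i)=\ell(j)$ on $B_i(\ell(i))$) actually enters. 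Second, the appeal to ``Kolmogorov extension'' at the end is slightly misplaced: you are not constructing a new measure, you already have $\mathbb{P}_{\pi}$ and merely need to verify that the conditional law of $\Pi\circ\theta_\ell$ agrees with it; for that it suffices to check equality on cylinder events determined by finitely many integers and finitely many times, which is exactly what the restriction to $[n]$ gives. Your verification that the dyadic rounding preserves the constancy condition (via $B_i(\ell_k(i))\subseteq B_i(\ell(i))$ since $\ell_k(i)\geq\ell(i)$) is correct and is a point that is easy to get wrong, so it is good that you addressed it explicitly.
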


Heuristically, we are going along the rays from the root to the
boundary, one at a time (each integer defines a ray, but observe that
there are some rays which do not correspond to an integer). On each ray
$\xi$ we have a stopping time $\tau_{\xi}$, that is, we look only at
what happens along that ray $(\xi_t, t\ge0)$ and based on that
information, an alarm rings at a random time. When later we go along
another ray $\xi'$, if $\xi'(\tau_\xi)=\xi(\tau_\xi)$ (i.e., the
two rays have not branched yet at $\tau_\xi$), then $\tau_{\xi
'}=\tau_\xi.$

Following Chauvin \cite{chauvin} and Kyprianou \cite{kypptrf} we now
introduce the notion of \textit{almost sure dissecting} and $L^1$-\textit{dissecting} stopping lines.

\begin{definition}
Let $\ell=(\ell(i) , i\in\N)$ be a stopping line.
\begin{longlist}[(ii)]
\item[(i)] We say that $\ell$ is a.s. dissecting if almost surely
$\sup_i \{\ell_i\} <\infty$.
\item[(ii)]Let $A_\ell(t) = \{ i \dvtx \ell_i > t \},$ then we say that
$\ell$ is $p-L^1$-dissecting if
\[
\lim_{t \to\infty} \mathbb{E} \biggl(\sum_{i \in A_\ell(t)} |\Pi
_i(t)|^{p+1} e^{\Phi(p) t} \biggr) =0.
\]
\end{longlist}
\end{definition}

\subsection{Spine and probability tilting}\label{sec4.2}

In this section we will discuss changes of measures and subsequent path
decompositions which were instigated by Lyons \cite{lyons} for the BRW
and further applied by Bertoin and Rouault \cite{BeRo} in the setting
of fragmentation processes. The following lemma is a so-called
many-to-one principle. It allows us to transform expectations of
functionals along a stopping line into expectations of functions of a
single particle, namely Bertoin's tagged fragment.
\begin{lemma}[(Many-to-one principle)]\label{L:many to one}
Let $\ell$ be a stopping line. Then, for any measurable nonnegative
$f$ we have
%
\begin{equation}\label{E:many-to-one}
\E\biggl(\sum_i |\Pi_i(\ell)| f(|\Pi_i(\ell)|, \ell_i)\biggr) = \E\bigl(f(|\Pi
_1(\ell)|,\ell_1) \mathbf{1}_{\{ \ell_1 <\infty\}}\bigr).
\end{equation}
\end{lemma}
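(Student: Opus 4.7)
The plan is to exploit the exchangeability of the marked random partition $(\Pi(\ell), (\ell_i)_{i \geq 1})$ and invoke Kingman's paintbox representation, which will identify the tagged fragment $\Pi_1(\ell)$ as a size-biased sample from the blocks of $\Pi(\ell)$.

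First I would establish that $(\Pi(\ell), (\ell_i)_{i \geq 1})$ is an exchangeable marked partition of $\mathbb{N}$. The homogeneous fragmentation $\Pi(t)$ is exchangeable at each fixed time, and a stopping line is defined by assigning to each initial integer $i$ a stopping time $\ell(i)$ which is measurable with respect to the intrinsic filtration $(\mathcal{G}_i(t))_{t \geq 0}$ of the block containing $i$, together with the consistency condition that $\ell(i)=\ell(j)$ whenever $j \in B_i(\ell(i))$. Since this construction is manifestly equivariant under finite permutations of the initial labels of $\mathbb{N}$, the resulting marked partition inherits exchangeability from $\Pi$.

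Once exchangeability is in hand, Kingman's representation theorem applied to $\Pi(\ell)$ yields the following conditional law. Denote by $\mathcal{M}$ the $\sigma$-field generated by the unordered multiset $\{(|\Pi_i(\ell)|, \ell_i) : i \geq 1\}$. Conditional on $\mathcal{M}$, the integer $1$ falls in the block associated with a pair $(q, \tau)$ with probability $q$; on the complementary dust event of conditional probability $1 - \sum_i |\Pi_i(\ell)|$ one has $\ell_1 = \infty$. Both sides of (\ref{E:many-to-one}) are measurable with respect to $\mathcal{M}$: the left-hand side is simply $\sum_i |\Pi_i(\ell)| f(|\Pi_i(\ell)|, \ell_i)$, while by the paintbox formula just stated, the conditional expectation of $f(|\Pi_1(\ell)|, \ell_1)\mathbf{1}_{\{\ell_1 < \infty\}}$ given $\mathcal{M}$ is precisely the same quantity. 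Taking total expectations completes the proof.

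The main technical obstacle is the first step: rigorously verifying that $(\Pi(\ell), (\ell_i)_{i \geq 1})$ is exchangeable for a general stopping line, rather than merely at a deterministic time. This rests on the Poissonian construction of $\Pi$ recalled in Section \ref{S: disc} together with the intrinsic block-measurability of each $\ell(i)$, so that the entire stopping-line construction commutes with relabelings of the initial integers. Once this structural point is settled, Kingman's theorem delivers the remainder of the argument essentially for free.
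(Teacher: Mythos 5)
Your proposal follows essentially the same route as the paper: the paper's proof is a two-line argument that, because $\Pi(\ell)$ is exchangeable, the pair $(\Pi_1(\ell),\ell_1)$ is a size-biased pick from $((\Pi_i(\ell),\ell_i), i\in\mathbb{N})$, and the indicator accounts for possible dust. You expand this by sketching why exchangeability survives stopping at a stopping line (equivariance of the Poissonian construction under relabelling of $\mathbb{N}$) and by making the size-biased-pick step explicit via conditioning on the multiset $\sigma$-field, but these are just more careful renditions of the same idea rather than a different argument.
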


\begin{pf}
To see this, observe that because $\Pi(\ell)$ is an exchangeable
partition, the pair $(\Pi_1(\ell),\ell_1)$ is a size-biased pick
from the sequence $((\Pi_i(\ell),\ell_i), i\in\N)$ [i.e., given
$((|\Pi_i(\ell)|,\ell_i), i\in\N)= ((x_i, l_i), i\in\N)$ then
$\P( (|\Pi_1(\ell)|,\ell_1) =(x_i,l_i))=x_i$]. The indicator
function in the right-hand side comes from the possibility that there
is some dust in $\Pi(\ell)$.
\end{pf}

Observe that if $\ell$ is almost surely dissecting, as $\Pi$ is
conservative, then $\Pi(\ell)$ has no dust. The converse is not true.

The second tool we shall use is a probability tilting that was
introduced by Lyons, Permantle and Peres \cite{LPP} for Galton--Watson
processes, Lyons \cite{lyons} for BRWs and by Bertoin and Rouault
\cite{BeRo} for fragmentation.
First note that since $(-{\log}|\Pi_1(t)|\dvtx t\geq0)$ is a subordinator
with Laplace exponent $\Phi$, it follows that
\[
\mathcal{E}(t,p) :=|\Pi_1(t)|^p e^{t\Phi(p)}
\]
is a $(\P,\G_1(t))$-martingale. If we project this martingale on the
filtration $\mathbb{F}$, we obtain the martingale $W(t,p).$ We can use
these martingales to define the tilted probability measures
%
\begin{equation}
d\P^{(p)}_{|\G_t} = \mathcal{E}(t,p)\,d\P_{|\G_t}
\label{E: change of pb}
\end{equation}
and
\[
d\P^{(p)}_{|\F_t} = W(t,p)\,d\P_{|\F_t}.
\]

The effect of the latter change of measure is described in detail in
\cite{BeRo}, Proposition~5. More precisely, under $\P^{(p)}$ the
process $\xi_t = -{\log}|\Pi_1(t)|$ is a subordinator with Laplace exponent
\[
\Phi^{(p)}(q) =\Phi(p+q) - \Phi(p), \qquad    q > \underline{p} -p,
\]
and L\'evy measure given by
%
\begin{equation}
m^{(p)}(dx) = e^{-px}m(dx),
\label{mp}
\end{equation}
where $m(dx)$ was given in (\ref{m(dx)}).
Under $\P^{(p)}$, the blocks with index not equal to unity split with
the same dynamic as shown before. The block containing 1 splits
according to the atoms of Poisson Point Process $\{(t,\pi(t)) \dvtx t\geq
0\}$ on $\R_+ \times\mathcal{P} $ with intensity $dr \otimes|\pi
_1|^p \nu(d\pi).$

Observe that because $\ell_1=\ell(1)$ is a $\G_1(t)$-stopping time,
it is measurable with respect to the filtration of the aforementioned
Poisson Point Process and thus the above description is enough to
determine the law of $\ell_1$ under $\P^{(p)}.$ Adopting the notation
$\tau=\ell_1$ and writing $\xi_t$ instead of $-{\log}\vert\Pi_1(t)
\vert$, we have the following result:
\begin{lemma}\label{L: proba tilt} For all positive measurable $g$,
%
\begin{equation}\label{E:tiltingspine} \qquad
\E\biggl( \sum_i g(-{\log}|\Pi_i(\ell)|,\ell_i) |\Pi_i(\ell)|^{p+1}
e^{\Phi(p)\ell_i}\biggr) =\E^{(p)}\bigl( g(\xi_{\tau}, \tau) \mathbf{1}_{\{
\tau<\infty\}}\bigr).
\end{equation}
\end{lemma}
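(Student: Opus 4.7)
The identity will follow by chaining two already-established tools: the many-to-one principle (Lemma~\ref{L:many to one}) and the spine change of measure (\ref{E: change of pb}) driven by the $(\P,\G_1(t))$-martingale $\mathcal{E}(t,p)=|\Pi_1(t)|^p e^{t\Phi(p)}$.

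First I would apply Lemma~\ref{L:many to one} with the test function
\[
 f(x,l) := g(-\log x,\, l)\, x^{p}\, e^{\Phi(p)\, l},
\]
which is legitimate since $f$ is non-negative and measurable. The left-hand side of (\ref{E:many-to-one}) then becomes exactly the quantity on the left-hand side of (\ref{E:tiltingspine}), because $|\Pi_i(\ell)|\cdot |\Pi_i(\ell)|^{p}=|\Pi_i(\ell)|^{p+1}$. Hence the many-to-one identity yields
\[
\E\!\left( \sum_i  g(-\log |\Pi_i(\ell)|,\ell_i)\, |\Pi_i(\ell)|^{p+1}\, e^{\Phi(p)\ell_i}\right)
= \E\!\left( g(\xi_{\tau},\tau)\, \mathcal{E}(\tau,p)\, \mathbf{1}_{\{\tau<\infty\}}\right),
\]
recalling the notation $\xi_t=-\log|\Pi_1(t)|$ and $\tau=\ell_1$.

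Next I would convert the right-hand side to an expectation under $\P^{(p)}$. Since $\tau$ is a $(\G_1(t))$-stopping time, $\mathcal{E}(\cdot,p)$ is a non-negative $(\P,\G_1(t))$-martingale, and the tilt (\ref{E: change of pb}) is consistent with its natural extension to stopping times, optional stopping gives
\[
\E\!\left( g(\xi_{\tau},\tau)\, \mathcal{E}(\tau,p)\, \mathbf{1}_{\{\tau<\infty\}}\right)
= \E^{(p)}\!\left( g(\xi_{\tau},\tau)\, \mathbf{1}_{\{\tau<\infty\}}\right).
\]
The standard way to justify this is to truncate: replace $\tau$ by $\tau\wedge t$, use the defining identity $\E(Y \mathcal{E}(\tau\wedge t,p)) = \E^{(p)}(Y)$ for $Y\in\G_1(\tau\wedge t)$ bounded, and then let $t\uparrow\infty$ by monotone convergence on both sides (the indicator $\mathbf{1}_{\{\tau<\infty\}}$ appearing because mass of $\mathcal{E}(\tau\wedge t,p)$ on $\{\tau=\infty\}$ disappears in the limit).

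Combining the two displayed equalities gives (\ref{E:tiltingspine}). The main technical point is ensuring the optional stopping/change of measure step at the (possibly infinite) stopping time $\tau$, which is why the indicator $\mathbf{1}_{\{\tau<\infty\}}$ appears on the right; the rest is just a clean substitution into Lemma~\ref{L:many to one}.
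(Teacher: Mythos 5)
Your proof is correct and follows essentially the same route as the paper: apply the many-to-one principle with $f(x,l)=x^p g(-\log x,l)e^{\Phi(p)l}$ and then use the change of measure $d\P^{(p)}=\mathcal{E}(\cdot,p)\,d\P$ at the stopping time $\tau=\ell_1$. Your writeup is in fact slightly more careful than the paper's: you write the substituted $f$ correctly (with the $-\log$ absorbed into its first argument) and you explicitly flag and sketch the truncation/monotone-convergence argument needed to justify the change of measure at a possibly infinite stopping time, which the paper passes over in a single line.
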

\begin{pf} From Lemma \ref{L:many to one} with $f(x,\ell) = x^p
g(x,\ell)e^{\Phi(p)\ell}$ we have
\begin{eqnarray*}
&& \E\biggl( \sum_i g(-{\log}|\Pi_i(\ell)|,\ell_i) |\Pi_i(\ell)|^{p+1}
e^{\Phi(p)\ell_i}\biggr)\\
&& \qquad = \E\bigl(g(-{\log}|\Pi_1(\ell)|,\ell_1) |\Pi_1(\ell)|^p e^{\Phi
(p)\ell_1} \mathbf{1}_{\{\ell_1<\infty\}}\bigr)\\
&& \qquad =\E\bigl(g(\xi_\tau, \tau) \mathcal{E} (\tau, p) \mathbf{1}_{\{ \tau
<\infty\}} \bigr)
\\ && \qquad =\E^{(p)}\bigl( g(\xi_{\tau}, \tau) \mathbf{1}_{\{ \tau<\infty\}}\bigr)
\end{eqnarray*}
and the result follows.
\end{pf}

As a first application of these tools we prove the analogue of Theorem~2 in
\cite{kypptrf} which gives a necessary and sufficient condition
for a stopping line to be $p-L^1$ dissecting.

\begin{theorem}\label{T:L1 dissect}
A stopping-line $\ell$ is $p-L^1$-dissecting if and only if
\[
\P^{(p)}(\ell_1 <\infty)=1.
\]
\end{theorem}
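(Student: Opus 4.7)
The plan is to apply the probability tilting identity of Lemma \ref{L: proba tilt} to the truncated stopping line $\ell \wedge t$, which is itself a stopping line. Choosing the test function $g(x,s) = \mathbf{1}_{\{s = t\}}$ in that lemma singles out exactly the blocks of $\Pi(\ell \wedge t)$ that are ``frozen'' at time $t$, namely those indexed by $A_\ell(t)$ in the definition of $p$-$L^1$-dissecting.

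More precisely, under $g(x,s) = \mathbf{1}_{\{s = t\}}$, the right-hand side of (\ref{E:tiltingspine}) applied to $\ell \wedge t$ reduces to $\mathbb{P}^{(p)}((\ell\wedge t)_1 = t) = \mathbb{P}^{(p)}(\ell_1 \geq t)$. The left-hand side is the expectation of the sum over blocks $\Pi_i(\ell\wedge t)$ with $(\ell \wedge t)_i = t$. A short combinatorial argument using defining property (ii) of stopping lines shows that such a block must be of the form $B_i(t)$ for some $i$ with $\ell(i) > t$, and moreover every index in it must have original $\ell$-value strictly greater than $t$: indeed, if some $j \in B_i(t)$ had $\ell(j) \leq t$ then the monotonicity $B_i(t) = B_j(t) \subseteq B_j(\ell(j))$ would force $\ell(i) = \ell(j) \leq t$ by property (ii), contradicting $\ell(i) > t$. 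Identifying this sum with $\sum_{i\in A_\ell(t)} |\Pi_i(t)|^{p+1}$ (so that each unfinished block of $\Pi(t)$ is counted once) therefore yields
\[
\mathbb{E}\Bigl(\sum_{i\in A_\ell(t)} |\Pi_i(t)|^{p+1}e^{\Phi(p)t}\Bigr) \;=\; \mathbb{P}^{(p)}(\ell_1 \geq t).
\]

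The equivalence claimed in the theorem is now immediate by sending $t \to \infty$. By definition, the left-hand side tends to $0$ exactly when $\ell$ is $p$-$L^1$-dissecting, while by monotone convergence the right-hand side tends to $\mathbb{P}^{(p)}(\ell_1 = \infty) = 1 - \mathbb{P}^{(p)}(\ell_1 < \infty)$, and so the two conditions are equivalent. I expect the main (if minor) obstacle to be the block-counting step identifying the many-to-one sum with the sum appearing in the definition of $p$-$L^1$-dissecting, which needs the combinatorial argument above to rule out any ``mixed'' blocks of $\Pi(t)$ whose indices straddle the threshold $t$ and would otherwise produce double counting.
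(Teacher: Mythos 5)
Your proof is correct and follows essentially the same approach as the paper's: the paper's terse two-line argument is precisely the probability tilting of Lemma \ref{L: proba tilt} applied to the truncated stopping line $\ell\wedge t$, and you have simply made explicit the test function, the block-identification via property (ii) of stopping lines, and the monotone limit. The only cosmetic discrepancy is that $g(x,s)=\mathbf{1}_{\{s=t\}}$ applied to $\ell\wedge t$ selects blocks with $\ell_i\geq t$ and yields $\mathbb{P}^{(p)}(\ell_1\geq t)$, whereas $A_\ell(t)$ is defined with strict inequality (the paper glosses over this too, writing $\mathbb{P}^{(p)}(\ell_1>t)$); this is immaterial since both tend to $\mathbb{P}^{(p)}(\ell_1=\infty)$ as $t\to\infty$.
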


\begin{pf} From Lemma~\ref{L: proba tilt},
\begin{eqnarray*}
\E\biggl(\sum_{i \in A_\ell(t)} |\Pi_i(t)|^{p+1} e^{\Phi(p) t} \biggr) &=& \E
\biggl(\sum_{i} \mathbf{1}_{\{ \ell_i >t\}} |\Pi_i(t)|^{p+1} e^{\Phi(p)
t} \biggr) \\
&=& \Bbb{P}^{(p)}( \ell_1 >t ).
\end{eqnarray*}
It follows that the limit of the left-hand side as $t\uparrow\infty$
is zero if and only if $P^{(p)}(\ell_1 <\infty)=1$.
\end{pf}

\subsection{Martingales}\label{sec4.3}

We define an ordering on stopping lines as follows: given $\ell^{(1)}$
and $\ell^{(2)}$ two stopping lines, we write $\ell^{(1)} \le\ell
^{(2)}$ if almost surely, for all $i \in\N\dvtx \ell^{(1)}(i) \le\ell
^{(2)}(i)$. So given a family $(\ell^z, z\ge0)$ of stopping lines we
say that $\ell^ {z}$ is increasing
if almost surely, for all $z \le z', \ell^z \le\ell^{z'}.$

Given $(\ell^z, z\ge0)$ an increasing family of stopping lines, we
may define two filtrations $\mathcal{G}_{\ell^z}$ and $\mathcal
{F}_{\ell^z}$ as follows:
\[
\mathcal{G}_i(\ell^z) = \sigma\bigl(B_i(s)\dvtx s\leq\ell^z(i) \bigr)  \quad
\mbox{and}\quad
\mathcal{F}_i(\ell^z) = \sigma\bigl(|B_i(s)|\dvtx s\leq\ell^z(i)\bigr)
\]
and then define
\[
\mathcal{G}_{\ell^z}: = \bigvee_{i}\mathcal{G}_i(\ell^z) \quad  \mbox{and} \quad \mathcal{F}_{\ell^z}: = \bigvee_{i}\mathcal{F}_i(\ell^z).
\]

Finally, we say that an increasing family of stopping lines $\ell^z$
is proper if $\lim_{z\to \infty} \mathcal{G}_{\ell^z} =\sigma\{ \Pi(t),
t\ge0\}$ which is equivalent to $\ell_i^z \to\infty$ for all $i$
almost surely.

The next lemma mirrors analogous results that were obtained for BBM by
Chauvin \cite{chauvin}. Recall that $c_p=\Phi(p)/(p+1)$.

\begin{theorem}\label{stopmgs} Fix $p\in(\underline{p}, {\op}]$ and
let $\psi_p$ be a solution to $(\ref{E:prod mart})$, that is, a
function that makes $(M(t,p,x),t\ge0)$ a martingale and is bounded
between $0$ and $1$. Then:
\begin{longlist}[(ii)]
\item[(i)] Let $(\ell^z, z\ge0)$ be a increasing family of a.s.
dissecting lines. Then the stochastic process\vspace*{-1pt}
\[
M(\ell^z, p, x): = \prod_{i} \psi_p\bigl(x - {\log}|\Pi_i(\ell^z)| -c_p
\ell^z(i)\bigr), \qquad    z\geq0,
\]
is a uniformly integrable martingale with respect to $\{\mathcal
{F}_{\ell^z}\dvtx z\geq0\}$ having limit equal to $M(\infty, p,x)$.
\item[(ii)] Let $(\ell^z, z\ge0)$ be a increasing family of
$p-L^1$-dissecting lines, then the stochastic process\vspace*{-1pt}
\[
W(\ell^z, p) := \sum_{i}|\Pi_i(\ell^z)|^{p+1}e^{\Phi(p)\ell
^z(i)}, \qquad    z\geq0,
\]
is a unit mean martingale with respect to $\{\mathcal{F}_{\ell
^z}\dvtx z\geq0\}$. Furthermore, when $p\in(\underline{p}, {\overline p})$\vspace*{-1pt}
\[
\lim_{z\uparrow\infty} W(\ell^z, p) = W(\infty, p)
\]
in $L^1$
where $W(\infty, p)$ is the martingale limit described in Theorem
\ref{T:mgcgce}(\textup{i}).
\end{longlist}
\end{theorem}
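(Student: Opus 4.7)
The plan is to identify both $M(\ell^z, p, x)$ and $W(\ell^z, p)$ as conditional expectations, given $\F_{\ell^z}$, of the respective $\F_\infty$-measurable martingale limits $M(\infty,p,x)$ and $W(\infty,p)$. Once this identification is made, the martingale property, uniform integrability in (i), and the $L^1$-convergence statements all follow simultaneously by L\'evy's upward theorem, using that the properness of $\{\ell^z\}$ delivers $\F_{\ell^z}\uparrow\F_\infty$ as $z\uparrow\infty$.

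For part (i), the martingale $M(t,p,x)$ takes values in $[0,1]$, hence is UI and converges a.s.\ and in $L^1$ to $M(\infty,p,x)$. Because $\ell^z$ is a.s.\ dissecting, $\tau^\star:=\sup_i\ell^z(i)<\infty$ a.s., so for $t\geq\tau^\star$ the extended fragmentation property (Lemma~\ref{L:Markov}) yields the factorisation
\begin{equation*}
M(t,p,x)=\prod_i \widetilde M^{(i)}\bigl(t-\ell^z(i),\,p,\,y_i\bigr),\qquad y_i:=x-\log|\Pi_i(\ell^z)|-c_p\ell^z(i),
\end{equation*}
where, conditionally on $\G_{\ell^z}$, the processes $\widetilde M^{(i)}$ are multiplicative martingales attached to i.i.d.\ copies of the fragmentation started from the trivial partition. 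Letting $t\to\infty$ by bounded convergence gives $M(\infty,p,x)=\prod_i\widetilde M^{(i)}(\infty,p,y_i)$. Each factor is the $L^1$-limit of a $[0,1]$-valued UI martingale started from $\psi_p(y_i)$, hence has conditional mean $\psi_p(y_i)$; combining conditional independence with dominated convergence on the partial products yields $\E[M(\infty,p,x)\mid\G_{\ell^z}]=\prod_i\psi_p(y_i)=M(\ell^z,p,x)$. Since $M(\ell^z,p,x)$ is $\F_{\ell^z}$-measurable and $\F_{\ell^z}\subseteq\G_{\ell^z}$, a further conditioning gives $M(\ell^z,p,x)=\E[M(\infty,p,x)\mid\F_{\ell^z}]$, and L\'evy's upward theorem supplies the a.s.\ and $L^1$ convergence to $M(\infty,p,x)$.

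For part (ii), the unit-mean identity $\E[W(\ell^z,p)]=1$ follows immediately from many-to-one (Lemma~\ref{L:many to one}) together with the spine tilting formula (Lemma~\ref{L: proba tilt}), applied with $g\equiv 1$, combined with Theorem~\ref{T:L1 dissect}. Since $p$-$L^1$-dissecting does not imply a.s.\ dissecting, the surely-terminated decomposition used in (i) is not directly available; instead, for each $t\geq 0$ we work with the bounded stopping line $\ell^z\wedge t$, for which the standard optional-sampling identity for the $\F_t$-martingale $W(\cdot,p)$, using Theorem~\ref{T:mgcgce}(i) for $p\in(\underline p,\overline p)$ to guarantee $L^1$-convergence of $W(t,p)$ to $W(\infty,p)$, gives
\begin{equation*}
W(\ell^z\wedge t,p)=\E[W(\infty,p)\mid\F_{\ell^z\wedge t}].
\end{equation*}
Splitting $W(\ell^z\wedge t,p)$ into blocks already stopped by time $t$ and those still active, the first piece increases monotonically to $W(\ell^z,p)$, while the $L^1$-norm of the second piece tends to zero precisely by the $p$-$L^1$-dissecting hypothesis. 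Hence $W(\ell^z\wedge t,p)\to W(\ell^z,p)$ in $L^1$; combined with $\F_{\ell^z\wedge t}\uparrow\F_{\ell^z}$ and L\'evy's upward theorem, this yields $W(\ell^z,p)=\E[W(\infty,p)\mid\F_{\ell^z}]$, from which the martingale property and the $L^1$-convergence follow at once.

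The main obstacle is the careful bookkeeping behind the optional-stopping step at the bounded line $\ell^z\wedge t$ and the product/sum decomposition obtained from the extended fragmentation property---in particular verifying measurability of the induced internal stopping lines attached to each sub-fragmentation and the conditional independence of these sub-fragmentations given $\G_{\ell^z}$. Once these technicalities are set, the argument is reduced to standard martingale-convergence tools.
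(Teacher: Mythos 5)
Your argument is correct and follows essentially the same route as the paper: both parts hinge on the decomposition afforded by the extended fragmentation property (Lemma~\ref{L:Markov}), the a.s.\ dissecting hypothesis in (i) to terminate that decomposition, and the identification of $M(\ell^z,p,x)$ and $W(\ell^z,p)$ as projections of the respective $\F_\infty$-measurable limits. In part (i) you streamline the presentation by establishing the projection identity $\E[M(\infty,p,x)\mid\F_{\ell^z}]=M(\ell^z,p,x)$ first and then reading off the martingale property, uniform integrability and convergence simultaneously, whereas the paper first derives the constant-expectation identity~(\ref{constexp}), then proves the martingale property by a second decomposition, and only then notes the projection identity; the content is the same. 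In part (ii) your treatment is in fact a bit more careful than what the paper writes down: the paper asserts $\E[W(\infty,p)\mid\F_{\ell^z}]=W(\ell^z,p)$ as a direct consequence of Lemma~\ref{L:Markov} without elaborating that $\ell^z$ need not be a.s.\ dissecting, whereas you bridge the gap by truncating to the bounded (hence a.s.\ dissecting) line $\ell^z\wedge t$, where the part~(i) mechanism applies, and then pass to the limit in $L^1$ using precisely the $p$-$L^1$-dissecting hypothesis together with $\F_{\ell^z\wedge t}\uparrow\F_{\ell^z}$. That is a tidy way to justify the step the paper leaves to the reader, and the rest of your argument (unit mean via many-to-one and tilting, martingale property and $L^1$-convergence from the projection identity and L\'evy upward) matches the paper.
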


\begin{rem}
A straightforward analogue result cannot hold for $\partial W$ as it is
a signed martingale on the stopping lines we consider.
\end{rem}

\begin{pf}
We start with (i). With the help of Lemma \ref{L:Markov}, we have for
all $x\in\mathbb{R}$
\begin{eqnarray*}
 &&\mathbb{E}\prod_{i} \psi_p\bigl(x - {\log}|\Pi_i(t)| - c_p t
\bigr) \\[-1pt]
&& \qquad = \mathbb{E}\prod_{i\dvtx\ell^z_i\geq t} \psi_p\bigl(x - {\log}|\Pi_i(t)|
-c_p t \bigr)
\\[-1pt]
 && \qquad  \quad {} \times \prod_{i\dvtx\ell_i^z <t} \prod_{j \dvtx \Pi_j(t) \subset\Pi
_i(\ell^z)} \psi_p \bigl(x -{\log}|\Pi_i(\ell^z)|- c_p \ell^z_i\\[-1pt]
&& \qquad  \quad
\hphantom{\times\prod_{i\dvtx\ell_i^z <t} \prod_{j \dvtx \Pi_j(t) \subset\Pi
_i(\ell^z)} \psi_p \bigl(\,}{} - \log
\bigl(|\Pi_j(t)|/|\Pi_i(\ell^z)|\bigr) -c_p (t- \ell^z_i)\bigr)\\
&& \qquad = \mathbb{E}\prod_{i\dvtx\ell^z_i\geq t} \psi_p\bigl(x - {\log}|\Pi_i(t)|
-c_p t \bigr)\\
&& \qquad  \quad {}\times \prod_{i\dvtx\ell_i^z <t} M^{(i)} \bigl(t- \ell^z_i, p,x-{\log}|\Pi
_i(\ell^z)|- c_p \ell^z_i\bigr)\\
&& \qquad = \mathbb{E}\prod_{i\dvtx\ell^z_i\geq t} \psi_p\bigl(x - {\log}|\Pi_i(t)|
-c_p t \bigr) \prod_{i\dvtx \ell_i^z <t} \psi_p\bigl(x - {\log}|\Pi_i(\ell^z) | -
c_p \ell^z_i\bigr),
\end{eqnarray*}
where in the second equalities, given $\mathcal{F}_{\ell^z}$, the
quantities $M^{(i)}(\cdot, p, \cdot)$ are independent copies of
$M(\cdot, p,\cdot)$. As $\ell^z$ is almost surely dissecting, we
know that as $t \to\infty$ the set $\{ i\dvtx\ell^z_i\geq t \}$ becomes
empty almost surely. Since $\psi_p$ is positive and bounded by unity,
we may apply dominated convergence to deduce as $t\uparrow\infty$
that for all $x\in\mathbb{R}$,
%
\begin{equation}
\psi_p(x)=\mathbb{E}\prod_{i} \psi_p\bigl(x - {\log}|\Pi_i(\ell^z) | -
c_p \ell^z_i\bigr).
\label{constexp}
\end{equation}
Since $z\geq0$ is arbitrarily valued, the last equality in combination
with the Extended Fragmentation Property is sufficient to deduce the
required martingale property. Indeed, suppose that $z'>z\geq0;$ we
have that
\begin{eqnarray*}
&&M(\ell^{z'},p,x) \\
&& \qquad = \prod_{i} \psi_p\bigl(x - {\log}|\Pi_i(\ell^{z'})| -c_p \ell^{z'}_i\bigr)
\\ && \qquad= \prod_{i} \prod_{j \dvtx \Pi_j(\ell^{z'}) \subset\Pi_i(\ell
^z)} \psi_p\bigl(x - {\log}|\Pi_j(\ell^{z'})| -c_p \ell^{z'}_j\bigr)
\\ && \qquad= \prod_{i} \prod_{j \dvtx \Pi_j(\ell^{z'}) \subset\Pi_i(\ell
^z)} \psi_p\bigl( \bigl(x - {\log}|\Pi_i(\ell^z)| - c_p \ell^z_i\bigr)\\
&& \qquad  \quad\hphantom{ \prod_{i} \prod_{j \dvtx \Pi_j(\ell^{z'}) \subset\Pi_i(\ell
^z)} \psi_p\bigl( }
{} - \log\bigl(|\Pi
_j(\ell^{z'})|/|\Pi_i(\ell^z)|\bigr) -c_p (\ell^{z'}_j -\ell_i^z )\bigr).
\end{eqnarray*}
Using (\ref{constexp}) and Lemma \ref{L:Markov} we see that for all
$\mathcal{F}_{\ell^z}$-measurable $x'$,
\[
\psi_p (x') = \E\biggl( \prod_{j \dvtx \Pi_j(\ell^{z'}) \subset\Pi_i(\ell
^z)} \psi_p\bigl( x' - \log\bigl(|\Pi_j(\ell^{z'})|/|\Pi_i(\ell^z)|\bigr) -c_p
(\ell^{z'}_j -\ell_i^z )\bigr)\Big |\mathcal{F}_{\ell^z}\biggr)
\]
so the martingale property follows. Uniform integrability follows on
account of the fact that $0\leq\psi_p\leq1$ and since $\ell^z$ is
a.s. dissecting Lemma \ref{L:Markov} (and more precisely the
independence of the subtrees which start at $\ell^z$) gives us that
%
\[
\mathbb{E}(M(\infty, p,x)| \mathcal{F}_{\ell^z}) = M(\ell^z, p,x).
\]

We now prove (ii). Let $\ell$ be a $p-L^1$ dissecting stopping line.
By Lemma \ref{L: proba tilt} and the Monotone Convergence Theorem we
have that
%
\begin{eqnarray*}
\E(W(\ell,p)) &=& \lim_{t \to\infty} \E \biggl[ \sum_i \mathbf
{1}_{\{\ell_i \le t\}} |\Pi_i(\ell_i)|^{p+1} e^{\Phi(p) \ell
_i} \biggr] \\
&=&\lim_{t \to\infty} \E^{(p)} [ \mathbf{1}_{\tau\le t} ]\\
&=& 1,
\end{eqnarray*}
where we have used the many-to-one principle, the probability tilting
(\ref{E: change of pb}) and Theorem \ref{T:L1 dissect}.

To prove the martingale property, fix $0 \le z \le z'$ and observe that
\begin{eqnarray*}
W(\ell_z,p) &=& \sum_i \sum_{j \dvtx \Pi_j(\ell^{z'}) \subset\Pi
_i(\ell^z)} |\Pi_j(\ell^{z'})|^{p+1} e^{\Phi(p) \ell^{z'}_j}\\
&=& \sum_i |\Pi_i(\ell^{z})|^{p+1} e^{\Phi(p) \ell^{z}_i} \sum_{j
\dvtx \Pi_j(\ell^{z'}) \subset\Pi_i(\ell^z)}  \biggl(\frac{|\Pi
_j(\ell^{z'})|}{|\Pi_i(\ell^{z})|} \biggr)^{p+1} e^{\Phi(p) (\ell
^{z'}_j-\ell^z_i)}.
\end{eqnarray*}
We apply the strong Markov property in Lemma \ref{L:Markov} to obtain that
\[
\E \biggl( \sum_{j \dvtx \Pi_j(\ell^{z'}) \subset\Pi_i(\ell^z)}
 \biggl(\frac{|\Pi_j(\ell^{z'})|}{|\Pi_i(\ell^{z})|} \biggr)^{p+1}
e^{\Phi(p) (\ell^{z'}_j-\ell^z_i)}\Big | \mathcal{F}_{\ell
^{z}} \biggr) =1.
\]

The $L^1$ convergence of $W(\ell^z, p)$ when $p\in(\underline p,
\overline p)$ is a consequence of the fact that Lemma \ref{L:Markov}
applied to $\ell^z$ again gives us
\[
\mathbb{E}(W(\infty, p)| \mathcal{F}_{\ell^z}) = W(\ell^z, p)
\]
together with Theorem \ref{T:mgcgce}(i).
\end{pf}

\subsection{First-passage stopping lines}\label{sec4.4}

In this paragraph, we introduce the families of stopping lines we will
be using and we show that they satisfy the desired properties.

Fix $p \in(\underline{p} , {\op}]$ and for each $z \ge0$ let $\ell
^{(p,z)}$ be the stopping line defined as follows: For each $i \in\N$
\[
\ell^{(p,z)}(i) = \inf\{ t \ge0 \dvtx -{\ln}|B_i(t)| > z + c_p t\},
\]
where $c_p=\Phi(p)/(p+1).$ In other words, $\ell^{(p,z)}(i)$ is the
first time when $-{\ln}|B_i(t)|$ crosses the line $x=c_p t + z$ (see
Figure \ref{stop line}). 
Recall that $p \mapsto c_p$ is increasing on $(\underline p, \op]$
with $c_0=0.$

\begin{figure}

\includegraphics{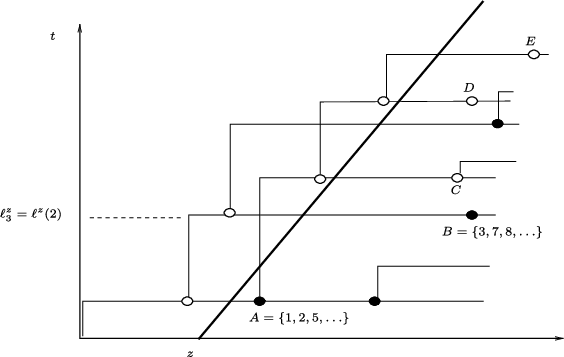}

\caption{Each dot represents a fragment (block) at its birth. The time
of birth is its vertical position while its $-\log$ size is given by
its position on the $x$ axis. Each horizontal line thus corresponds to
a single splitting event in which multiple fragments are created. The
blocks of $\Pi(\ell^{(p,z)})$ are the dots which are the first in
their line of descent to be on the right-hand side of the bold line $
x= z + c_p t .$ The collection of this dots is \textup{the coming
generation.} As $z$ increases, it reaches fragment $A$ which then
splits giving birth to $C,D,E, \ldots $ which will then replace $A$ in
the coming generation. For simplification this picture corresponds to
the case of a finite activity dislocation kernel $\nu$.}
\label{stop line}
\end{figure}

\begin{proposition}
For any fixed $p \in(\underline p, \op]$, the family of stopping
lines $(\ell^{(p,z)}, z\ge0)$ is a.s. dissecting and $p-L^1$
dissecting as well as proper.
\end{proposition}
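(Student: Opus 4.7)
The three properties — monotonicity in $z$, properness, and the two dissection properties — are handled separately. Monotonicity is immediate from the definition: raising $z$ only raises the barrier $z + c_p t$, so $\ell^{(p,z)}(i)$ is non-decreasing in $z$ ray by ray. Properness is the statement that $\ell^{(p,z)}(i) \to \infty$ as $z \to \infty$ for each $i$; this follows because for any fixed horizon $T$ the c\`adl\`ag sample path $s \mapsto -\log|B_i(s)| - c_p s$ is a.s.\ bounded on $[0,T]$, so once $z$ exceeds that bound we have $\ell^{(p,z)}(i) \geq T$, and $T$ is arbitrary.

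For the $p$-$L^1$ dissecting property I apply Theorem \ref{T:L1 dissect}, reducing matters to showing $\mathbb{P}^{(p)}(\ell^{(p,z)}(1) < \infty) = 1$. Under the tilted measure $\mathbb{P}^{(p)}$ the tagged subordinator $\xi_t := -\log|\Pi_1(t)|$ has Laplace exponent $\Phi^{(p)}(q) = \Phi(p+q) - \Phi(p)$ and drift $\Phi'(p)$. For $p \in (\underline{p}, \overline{p})$, the inequality $(p+1)\Phi'(p) > \Phi(p)$ gives $\Phi'(p) > c_p$, so the strong law of large numbers for subordinators yields $\xi_t - c_p t \to +\infty$ almost surely, whence $\ell^{(p,z)}(1)$ is a.s.\ finite. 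At the critical value $p = \overline{p}$, where $\Phi'(\overline{p}) = c_{\overline{p}}$, the process $\xi_t - c_{\overline{p}} t$ is a non-degenerate centred L\'evy process, so by the standard oscillation dichotomy $\limsup_{t \to \infty} (\xi_t - c_{\overline{p}} t) = +\infty$ a.s., and again first passage above $z$ occurs in finite time.

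For a.s.\ dissecting, a sufficient condition for $\sup_i \ell^{(p,z)}(i) \leq T$ is $-\log|\Pi_1(T)|^\downarrow > z + c_p T$, since then every current block of $\Pi(T)$ already sits above the barrier and every ray has been frozen by $T$. For $p \in (\underline{p}, \overline{p})$, the leading-fragment asymptotic (\ref{left-most-speed}) gives $-\log|\Pi_1(T)|^\downarrow / T \to c_{\overline{p}} > c_p$, so $-\log|\Pi_1(T)|^\downarrow - c_p T \to +\infty$ a.s.\ and such a $T$ exists. The main obstacle is the critical case $p = \overline{p}$: here the barrier slope matches the leading-fragment speed, the first-order LLN yields only $-\log|\Pi_1(T)|^\downarrow - c_{\overline{p}} T = o(T)$, and one needs a finer statement of the form $\limsup_T(-\log|\Pi_1(T)|^\downarrow - c_{\overline{p}} T) = +\infty$ a.s.\ (or an alternative reduction exploiting the $L^1$-dissecting property already established) in order to conclude.
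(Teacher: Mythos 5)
Your proof follows the same route as the paper's. The $p$-$L^1$-dissection part (via Theorem \ref{T:L1 dissect} and the drift $\Phi'(p)-c_p\ge 0$ of the tilted subordinator) and the properness argument are both correct; the latter is in fact omitted from the paper's proof, so your explicit treatment is a small improvement, as is your splitting of the $L^1$ step into a subcritical law of large numbers and a critical oscillation dichotomy where the paper offers a single terse line. Your observation about the critical case for a.s.\ dissection is also on the mark: the deduction that $-\log|\Pi_1(t)|^\downarrow-c_{\overline{p}}t\to\infty$ a.s.\ does \emph{not} follow from the first-order law of large numbers (\ref{left-most-speed}) alone. This is, however, not something you need to prove from scratch: display (\ref{superspeed}), which the paper invokes, is a sharpened statement on the behaviour of the largest fragment at the critical speed and is taken from Bertoin \cite{Be4} (see also \cite{berest2}); the paper's phrase ``we have from (\ref{left-most-speed})'' is only literally accurate for $p<\overline{p}$, and the boundary case $p=\overline{p}$ rests on the cited references rather than on the LLN. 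Quoting that result in place of your acknowledged placeholder closes the gap and makes your argument complete; no new idea is required, and your suggested fallback via the $L^1$-dissecting property would not suffice, since $L^1$-dissection does not by itself imply a.s.\ dissection.
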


\begin{pf}
We know from Theorem \ref{T:mgcgce} that when $p = \bar p$ the martingale
\[
W(t ,\bar p ):= e^{\Phi(\bar p)t}\sum_{i}|\Pi_i(t)|^{\bar p+1}, \qquad
t\geq0,
\]
converges almost surely to 0.
Hence, $e^{\Phi(\bar p) t} (|\Pi_1(t)|^\downarrow)^{\bar p+1} = \exp
( -( \bar p+1)\times\break  ( - \log(|\Pi_1(t)|^\downarrow) - c_{\bar p} t) )$
also tends to 0 a.s. and we conclude that
\[
\P\bigl( -\log( |\Pi_1(t)|^\downarrow) - c_{\bar p} t \to+\infty\bigr) =1.
\]
As $c_p < c_{\bar p}$ when $p <\bar p$, this entails immediately that
\[
\forall p \in(\underline{p}, \bar p] \qquad    \P\bigl( -\log( |\Pi
_1(t)|^\downarrow) - c_{ p} t \to+\infty\bigr) =1
\]
and hence, that
the stopping lines $(\ell^{(p,z)}\dvtx z\geq0)$ are a.s. dissecting.

To prove that for any $z \ge0, \ell^{(z,p)}$ is $L^1$-dissecting we
use Theorem \ref{T:L1 dissect}. Recall that under $\P^{(p)}$ the
process $(\xi_t = -{\log}|\Pi_1(t)|, t\ge0)$ is a subordinator with
Laplace exponent $\Phi^{(p)}.$
\[
\P^{(p)}\bigl(\ell^{(p,z)}_1<\infty\bigr) = \P^{(p)} (\inf\{t \dvtx\xi_t -c_p
t >z\}<\infty)
\]
and this is equal to one if and only if the mean of the L\'evy process
$(\xi_t - c_p , t\geq0)$ is positive under $\mathbb{P}^{(p)}$. That
is to say, if and only if $\Phi^{(p)\prime}(0)-c_p = \Phi'(p)
-c_p\ge0$ which is equivalent to $p \le\overline p .$
\end{pf}

Consider the stopping line $\ell^{(p,0)}.$ Two fundamental differences
in the way this stopping line dissects the fragmentation process occurs
in the regimes $p\le0$ and $p>0$. When $p \le0,$ it is easily seen
that for all $i \in\N,$ we have that $\ell^{(p,0)}(i)=0$ so that
$\Pi(\ell^{(p,0)})$ is the trivial partition with all integers in one
block. On the other hand, when $p>0,$ we claim that almost surely, for
all $i \in\N$ we have $\ell^{(p,0)}(i) >0.$ By exchangeability it is
enough to prove it for $\ell^{(p,0)}(1).$ Observing that $-{\log}|\Pi
_1(t)| - c_p t $ is a spectrally positive L\'evy process with negative
drift, standard theory (cf. \cite{Be1}, Chapter VII) tells us it has
the property that $(0,\infty)$ is irregular for $0$ which, in turn,
implies the claim. Hence, when $p>0$, the partition $\Pi(\ell
^{(p,0)})$ is a nontrivial collection of nonsingleton blocks.

\subsection{Embedded CMJ process}\label{sec4.5}


A CMJ process is a branching process in which a typical individual
reproduces at ages according to a random point process on $[0,\infty)$
and may or may not live forever.
The coming generation at time $t$ of a CMJ process consists of the
collection of individuals born after time $t$ whose parent was born
before time $t$.

In this section we show that, for appropriate values of $p$, the
collection of blocks $\Pi(\ell^{(p,z)})$ is also the coming
generation at time $z$ of certain CMJ process which is path-wise
embedded into the fragmentation process. Our observation builds on
ideas which go back to Neveu \cite{neveu} and Biggins and Kyprianou
\cite{BK97}.


In the following, we let $p \in(0,{\op}]$ be fixed and we consider
the collection of distances
\[
d_i = -\ln\bigl|\Pi_i\bigl(\ell^{(p,0)}\bigr)\bigr|- c_p \ell^{(p,0)}_i,
\]
that is, the point process of distances to the line $\ell^{(p,0)}$ of
the individuals in the first generation. Note specifically that the
latter point process cannot be defined when $p\leq0$ on account of the
fact that for all $i$ we have that $\ell^{(p,0)}(i)=0$.
Define $D^{(p)}(\cdot):= \sum_i \delta_{d_i} (\cdot)$ to be the
point process of the $d_i$'s and let $\mu^{(p)} =E[D^{(p)}]$ be its
intensity measure. The following proposition shows that the associated
intensity measure has several convenient properties.

\begin{proposition}\label{P:CMJ-conditions}
Fix $p \in(0,{\op}]$.
Then $\mu^{(p)}$ is a nonlattice measure with the following properties:
\begin{longlist}[(ii)]
\item[(i)] Its Malthusian parameter is equal to $p+1$, that is,
\[
\int_0^\infty e^{-(p+1) t } \mu^{(p)}(dt) =1.
\]
\item[(ii)] For all $\epsilon>0$ such that $|\Phi(p-\epsilon
)|<\infty$,
\[
\int_0^\infty e^{-(p+1-\epsilon) t} \mu^{(p)}(dt) <\infty.
\]
\end{longlist}
\end{proposition}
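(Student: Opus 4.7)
The plan is to reduce every claim about $\mu^{(p)}$ to a single-block computation---either by using the many-to-one principle (Lemma \ref{L:many to one}) to turn a sum on the stopping line $\ell^{(p,0)}$ into an expectation over the tagged fragment $\xi_t = -\log|\Pi_1(t)|$, or by combining it with the probability tilt of Lemma \ref{L: proba tilt}. Throughout write $\tau := \ell^{(p,0)}_1 = \inf\{t: \xi_t > c_p t\}$ and $O := \xi_\tau - c_p\tau \geq 0$ for the first-passage overshoot. A great deal of the algebraic housekeeping will be carried by the identity $(p+1)c_p = \Phi(p)$.

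For part (i), since $d_i = -\log|\Pi_i(\ell^{(p,0)})| - c_p\ell_i^{(p,0)}$ we have
\[
e^{-(p+1)d_i} = |\Pi_i(\ell^{(p,0)})|^{p+1}e^{(p+1)c_p\ell_i^{(p,0)}} = |\Pi_i(\ell^{(p,0)})|^{p+1}e^{\Phi(p)\ell_i^{(p,0)}},
\]
so $\int_0^\infty e^{-(p+1)t}\mu^{(p)}(dt) = \E[W(\ell^{(p,0)},p)] = 1$ by Theorem \ref{stopmgs}(ii), whose hypothesis---that $\ell^{(p,0)}$ be $p$-$L^1$-dissecting---has just been established in the preceding proposition.

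For part (ii), the analogous factorisation $|\Pi_i(\ell)|^{p+1-\epsilon}e^{(p+1-\epsilon)c_p\ell_i} = |\Pi_i(\ell)|^{p+1-\epsilon}e^{\Phi(p-\epsilon)\ell_i}\cdot e^{\lambda \ell_i}$ together with Lemma \ref{L: proba tilt} (used with parameter $p-\epsilon$ rather than $p$, and weight $g(\xi,t)=e^{\lambda t}$) yields
\[
\int_0^\infty e^{-(p+1-\epsilon)t}\mu^{(p)}(dt) = \E^{(p-\epsilon)}[e^{\lambda\tau}\mathbf{1}_{\tau<\infty}],\qquad \lambda := (p+1-\epsilon)(c_p - c_{p-\epsilon})\geq 0,
\]
the $(p-\epsilon)$-tilt being legitimate precisely under the hypothesis $p-\epsilon > \underline p$, i.e.\ $|\Phi(p-\epsilon)|<\infty$. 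Proving that the right-hand side is finite is the main obstacle. Under $\P^{(p-\epsilon)}$ the subordinator satisfies $\xi_t/t \to \Phi'(p-\epsilon) > c_p$---strictly, by concavity of $\Phi$ together with the fact that $\Phi'(p)\geq c_p$ on $(\underline p,\overline p]$---so $\tau$ is a.s.\ finite and admits exponential moments. Applying Chernoff to the inclusion $\{\tau>t\}\subseteq\{\xi_t\leq c_pt\}$ at exponent $q$ produces the rate $f(q) := qc_p - \Phi(p-\epsilon+q) + \Phi(p-\epsilon)$; the natural choice $q=\epsilon$ already yields $f(\epsilon) = -\lambda$, and since $f'(\epsilon) = c_p - \Phi'(p) < 0$ for $p < \overline p$, the true optimum $f(q^*)$ is strictly less than $-\lambda$, giving the required finite exponential moment. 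The boundary value $p=\overline p$ is delicate because $f'(\epsilon)=0$ there; one treats it by bounding the overshoot directly, $O \leq \Delta \xi_\tau$, and invoking the compensation formula for the Poisson jump measure of $\xi$ under $\P^{(p)}$, whose L\'evy measure $m^{(p)}(dy) = e^{-(p+1)y}\sum_i\nu(-\log s_i\in dy)$ reduces matters to the finiteness of $\int_{\nabla_1}\sum_i s_i^{p+1-\epsilon}\mathbf{1}_{s_i<e^{-u}}\nu(ds)$ for $u>0$---again exactly when $p-\epsilon>\underline p$.

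Finally, the non-lattice property of $\mu^{(p)}$ is inherited from the jump distribution of the spectrally positive L\'evy process $\xi_t - c_pt$ under $\P^{(p)}$: its L\'evy measure $m^{(p)}$ is non-arithmetic whenever the dislocation measure $\nu$ is not supported on a multiplicative lattice, a standing hypothesis in this setting, and this non-latticeness transfers to the first-passage overshoot distribution and hence to $\mu^{(p)}$.
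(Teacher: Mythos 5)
Your treatment of part (i) matches the paper's: rewriting $e^{-(p+1)d_i}$ as the martingale weight $y_i^{(p)}(\ell^{(p,0)})$ and invoking Theorem \ref{stopmgs}(ii). That step is fine.

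For part (ii) you take a genuinely different route. The paper reduces the Laplace transform to $\E^{(p)}(e^{\epsilon Y_{\tau_0}})$, where $Y_{\tau_0}$ is the overshoot, and then uses the explicit form of the ascending ladder height jump measure for spectrally positive processes (Corollary 7.9 of \cite{kyp book}) to compute $\int_0^\infty e^{\epsilon x}\,m_H^{(p)}(dx) = \frac{1}{\eta+\epsilon}[\Phi(p+\eta)-\Phi(p-\epsilon)]$ in closed form, which is manifestly finite under $|\Phi(p-\epsilon)|<\infty$ and works uniformly in $p\in(0,\overline{p}]$. You instead tilt by $p-\epsilon$ and try to show that $\E^{(p-\epsilon)}(e^{\lambda\tau}\mathbf{1}_{\{\tau<\infty\}})<\infty$, with $\lambda=(p+1-\epsilon)(c_p-c_{p-\epsilon})$. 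Your Chernoff argument correctly identifies $f(\epsilon)=-\lambda$ and gives $f(q^*)<-\lambda$ when $p<\overline{p}$, so the exponential moment is finite in that regime. But precisely at $p=\overline{p}$ the bound collapses: $f$ is convex with $f'(\epsilon)=c_p-\Phi'(p)=0$, so the optimal Chernoff rate is exactly $-\lambda$ and $\int_0^\infty \lambda e^{\lambda t}\,\P^{(p-\epsilon)}(\tau>t)\,dt$ is not controlled. Your fallback---bound $O\le\Delta\xi_\tau$ and invoke the compensation formula---is not a complete argument as stated. Writing the compensation formula out, one obtains something of the form $\int_0^\infty \widehat{U}^{(p)}(du)\int_u^\infty e^{\epsilon y}\,m^{(p)}(dy)$, where $\widehat{U}^{(p)}$ is the occupation measure of $-Y$ before $\tau_0$. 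At $p=\overline{p}$ the process $Y$ oscillates, $\E^{(p)}(\tau_0)=\infty$, and $\widehat{U}^{(p)}$ has infinite mass; finiteness therefore depends on how the tail $u\mapsto\int_u^\infty e^{\epsilon y}m^{(p)}(dy)$ decays when integrated against $\widehat{U}^{(p)}$, not merely on its finiteness for each fixed $u>0$. So the reduction ``to the finiteness of $\int_{\nabla_1}\sum_i s_i^{p+1-\epsilon}\mathbf{1}_{\{s_i<e^{-u}\}}\nu(ds)$ for $u>0$'' is not sufficient on its own. The gap can be closed (e.g.\ by identifying $\widehat{U}^{(p)}$ with a multiple of Lebesgue measure for a bounded-variation spectrally positive process and integrating by parts, which recovers the paper's answer with $\eta=0$), but as written the boundary case is not proved.

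The non-lattice claim is the one place where the approach is actually wrong rather than merely incomplete. You assert that non-latticeness of $\mu^{(p)}$ comes from a ``standing hypothesis'' that $\nu$ avoids a multiplicative lattice. No such hypothesis is made in the paper, and none is needed. The paper's argument is structural: $Y_t=\xi_t-c_pt$ has strictly negative drift $-c_p<0$ (this uses $p>0$) and no negative jumps, so between jumps $Y$ decreases along a line; consequently the undershoot $Y_{\tau_0-}$ is diffuse and the overshoot $Y_{\tau_0}=Y_{\tau_0-}+\Delta\xi_{\tau_0}$ is diffuse regardless of whether the jump measure $m^{(p)}$ is arithmetic. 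Your argument focuses on the wrong ingredient: even if $m^{(p)}$ were supported on a lattice, $\mu^{(p)}$ would still be diffuse because of the drift.
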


\begin{pf}
We first introduce some more notation: we define the \textit{martingale
weights}
%
\begin{equation}
y_i^{(p)}\bigl(\ell^{(p,z)}\bigr): = \bigl|\Pi_i\bigl(\ell^{(p,z)}\bigr)\bigr|^{(p+1)}e^{\Phi
(p)\ell^{(p,z)}_i}.
\label{mgweights}
\end{equation}

Note that for any Borel set $A\in[0,\infty)$ we have, with the help
of the many-to-one principle in Lemma \ref{L: proba tilt}, that
\begin{eqnarray*}
\int_A e^{-(p+1) t} \mu^{(p)}(dt) &=& E\biggl( \sum_i e^{-(p+1) d_i}
\mathbf{1}_{\{d_i\in A\}}\biggr) \\
&=& E\biggl( \sum_i y_i^{(p)} \bigl(\ell^{(p,0)}\bigr) \mathbf{1}_{\{
-{\ln}|\Pi_i(\ell^{(p,0)})|- c_p \ell^{(p,0)}_i \in A
\}} \biggr)\\
&=& \mathbb{P}^{(p)}(Y_{\tau_0}\in A),
\end{eqnarray*}
where $Y_t= \xi_t -c_p t$ and $\tau_0= \inf\{ t \dvtx Y_t >0 \}$. It is
well known that, since $Y$ is spectrally positive, the law of $Y_{\tau
_0}$ is diffuse and hence, nonlattice.
Note also that
\[
\int_0^\infty e^{-(p+1) t} \mu^{(p)}(dt) = \E\Bigl(\sum y_i\bigl(\ell
^{(p,0)}\bigr)\Bigr)=\E(W(p,0))=1,
\]
which establishes the proof of part (i).

For the proof of part (ii), our objective is to compute
%
\begin{equation}
\int_0^\infty e^{-(p+1-\epsilon)t} \mu^{(p)}(dt) = \mathbb
{E}^{(p)}(e^{\epsilon Y_{\tau_0}}).
\label{needtocheck}
\end{equation}
Noting that $(0,\infty)$ is irregular for $0$ for $Y$ (cf. \cite
{Be1}, Chapter~VII), and moreover, that $\mathbb{E}^{(p)}(Y_1) = \Phi
^{(p)\prime}(0)-c_ p = \Phi'(p)-c_p\geq0$, it follows that the
ascending ladder height of $Y$ is a compound Poisson process. The jump
measure of the latter, say $m^{(p)}_H(dx)$, is, therefore, proportional
to $\mathbb{P}^{(p)}(Y_{\tau_0}\in dx)$ and from \cite{kypbook},
Corollary~7.9, it can also be written in the form
\[
m_H^{(p)}(dx) = m^{(p)}(x,\infty)\,dx - \eta \biggl(\int_x^\infty
e^{-\eta(y-x)} m^{(p)}(y,\infty)\,dy \biggr)\,dx,
\]
where
$\eta$ is the largest root in $[0,\infty)$ of the equation $c_p\theta
- \Phi(\theta+p)+\Phi(p)=0$.

In order to verify (\ref{needtocheck}) it therefore suffices to prove that
\[
\int_0^\infty e^{\epsilon x}m^{(p)}_H(dx)<\infty
\]
whenever $|\Phi(p-\epsilon)|<\infty$. To this end, note that with
the help of Fubini's theorem and the fact that $m^{(p)}(dx) =
e^{-px}m(dx)$, we have
\begin{eqnarray*}
 &&\int_0^\infty e^{\epsilon x}m^{(p)}_H(dx)\\
&& \qquad = \int_0^\infty e^{\epsilon x}\int_x^\infty e^{-zp} m(dz)\,dx \\
&& \qquad  \quad {}- \eta
\int_0^\infty e^{\epsilon x} \biggl(\int_x^\infty e^{-\eta(y-x)} \int
_y^\infty e^{-pz}m(dz)\,dy \biggr)\,dx\\
&& \qquad =\int_0^\infty(1- e^{-\epsilon x}) e^{-(p-\epsilon)z} m(dz)\\
&& \qquad  \quad {}
-\frac{\eta}{\eta+\epsilon}\int_0^\infty(e^{\epsilon y} -
e^{-\eta y})  \biggl(\int_y^\infty e^{-pz} m(dz) \biggr)\,dy\\
&& \qquad = \frac{1}{\epsilon}[\Phi(p) - \Phi(p-\epsilon) ]-\frac{\eta
}{\epsilon(\eta+\epsilon)}\int_0^\infty
(1-e^{-\epsilon z})e^{-(p-\epsilon)z}m(dz) \\
&& \qquad  \quad {} + \frac{1}{\eta+\epsilon}\int_0^\infty(1- e^{-\eta
z}) e^{-pz} m(dz)\\
&& \qquad =\frac{1}{\eta+\epsilon}[\Phi(p) - \Phi(p-\epsilon)] + \frac
{1}{\eta+\epsilon}[ - \Phi(p)]\\
&& \qquad =\frac{1}{\eta+\epsilon}[ \Phi(p+\eta) - \Phi(p-\epsilon)],
\end{eqnarray*}
which is indeed finite when $|\Phi(p-\epsilon)|<\infty$.
\end{pf}

We conclude this section by showing an important relationship between
first passage stopping lines and the coming generation of an embedded
CMJ process. To this end, define for each $z \ge0$
\[
\mathcal{L}(z) := \bigl\{ \bigl|\Pi_i\bigl(\ell^{(p,z)}\bigr) \bigr|-c_p \ell
^{(p,z)}_i, i \in\N\bigr\}.
\]

\begin{theorem}\label{embedded-CMJ}
For each $p \in(0,\op],$ the process $z \mapsto\mathcal{L}(z)$ is
the process of the birth times of the individuals in the coming
generation at time $z$ of a CMJ process whose individuals live and
reproduce according to the point process $D^{(p)} .$
\end{theorem}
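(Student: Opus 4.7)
The plan is to construct the embedded CMJ tree inductively from the fragmentation, using the Extended Fragmentation Property at successive stopping lines $\ell^{(p,z)}$, and then to identify its coming generations with $\mathcal{L}(z)$. Take the initial fragment as the CMJ ancestor, born at position $0$, and declare its offspring to be the blocks $(\Pi_i(\ell^{(p,0)}))_{i\ge 1}$ with birth positions
\[ d_i := -\log|\Pi_i(\ell^{(p,0)})| - c_p\,\ell^{(p,0)}_i. \]
By construction the offspring point process is exactly $D^{(p)}$; moreover, since $p>0$ and $-\log|\Pi_1(t)|-c_p t$ is a spectrally positive L\'evy process with negative drift whose overshoot distribution at the first passage above $0$ is diffuse (as already used in the proof of Proposition \ref{P:CMJ-conditions}), each $d_i$ is a.s.\ strictly positive.

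For the induction step I would apply Lemma \ref{L:Markov} to $\ell^{(p,0)}$. Conditionally on $\mathcal{G}_{\ell^{(p,0)}}$, the post-$\ell^{(p,0)}$ evolution inside the block $\Pi_i(\ell^{(p,0)})$ is an independent homogeneous fragmentation which, after rescaling log-sizes by $-\log|\Pi_i(\ell^{(p,0)})|$ and shifting time by $\ell^{(p,0)}_i$, has the same law as the original $\Pi$. In this rescaled/shifted frame the original line $\{x = c_p t\}$ corresponds precisely to the analogous line seen from the $i$-th child. Consequently, the offspring of the $i$-th CMJ child, measured as displacements from its own birth position $d_i$, form an independent copy of $D^{(p)}$. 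Iterating this argument generation by generation yields a CMJ tree in which every individual reproduces according to an i.i.d.\ copy of $D^{(p)}$.

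With the tree in place, the identification of the coming generation at level $z$ with $\mathcal{L}(z)$ proceeds by inspecting, for each integer $i$, the ancestry in the embedded CMJ of the block $\Pi_i(\ell^{(p,z)})$. Following the branch of $\Pi$ that contains $i$ from the root, the successive CMJ-ancestors of $\Pi_i(\ell^{(p,z)})$ are exactly the successive first-passage blocks seen along that branch, with strictly increasing CMJ birth-positions $0 < z_1 < z_2 < \cdots$. The chain terminates a.s.\ when $z_k$ first exceeds $z$, at which moment the terminal block is $\Pi_i(\ell^{(p,z)})$ itself; its CMJ birth-position $-\log|\Pi_i(\ell^{(p,z)})|-c_p\ell^{(p,z)}_i$ is $>z$, whereas that of its parent is $\le z$. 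This is precisely the definition of the coming generation at level $z$, and every such CMJ individual arises in this way.

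The main obstacle is the bookkeeping across generations: one must verify that the embedded CMJ tree is well-defined, that each fragmentation block on any $\ell^{(p,z)}$ corresponds to exactly one CMJ individual, and that all reproduction point processes on the tree are genuinely i.i.d.\ copies of $D^{(p)}$. This relies on (i) Lemma \ref{L:Markov} together with the self-similarity of homogeneous fragmentations, which gives the independence and identical distribution of the sub-fragmentations; (ii) the a.s.\ dissecting property of $\{\ell^{(p,z)}:z\ge 0\}$ established in the preceding proposition, which ensures that every lineage of $\Pi$ is exhausted by the construction; and (iii) the strict positivity of the overshoots $d_i$ (valid for $p>0$), which guarantees that the sequence of CMJ birth-positions along any lineage is strictly increasing and hence eventually passes every level $z$.
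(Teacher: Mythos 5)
Your construction of the embedded CMJ tree via the extended fragmentation property (Lemma \ref{L:Markov}), the strict positivity of the overshoots $d_i$ for $p>0$, and the identification of $\mathcal{L}(z)$ with the coming generation is essentially the paper's argument; the paper merely phrases the same mechanism dynamically, as the line $\ell^{(p,z)}$ sweeping rightward with a single block replaced by an i.i.d.\ copy of $D^{(p)}$ at each successive threshold $d_i$. Your additional remarks on the bookkeeping (well-definedness of the tree, a.s.\ dissecting property, strictly increasing birth positions along lineages) are consistent with the paper's use of Proposition \ref{P:CMJ-conditions}(i) to guarantee an a.s.\ first atom, and the proof is correct.
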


\begin{pf}
Note that Proposition \ref{P:CMJ-conditions}(i) implies that $\mu
^{(p)}(0,\epsilon)<\infty$ for all $\epsilon>0$ and hence, there is
an almost sure first atom in the point process $D^{(p)}$.
The theorem is trivially true for each $z \le\inf\{ d_i, i\in\N\}.$
The process $\Pi(\ell^{(p,z)})$ is constant on $[0,\inf\{d_i\})$ and
has a jump at time $z_1=\inf\{d_i\}$ where a single block splits.
Thanks to the fragmentation property, it gives birth to a collection of
blocks in $\Pi(\ell^{(p,z_1)})$ whose positions to the right of their
parent is again an instance of the point process $D^{(p)} .$ This shows
that as the line $\ell^{(p,z)}$ sweeps to the right, the coming
generation process $\mathcal{L}(z)$ describes a CMJ process which is
our claim.
\end{pf}

\section{Laws of large numbers}\label{S: LLN}\label{sec5}

Before proceeding to the proof of asymptotics and uniqueness of
multiplicative-martingale functions in the class $\mathcal{T}_2(p)$,
we need to establish some further technical results which will play an
important role.
The following result is of a similar flavor to the types of laws of
large numbers found in Bertoin and Martinez \cite{BeMa} and Harris,
Knobloch and Kyprianou \cite{HKK}.

The next theorem gives us a strong law of large numbers for fragments
in $\Pi(\ell^{(p,z)})$ as $z\uparrow\infty$ with respect to the
weights (\ref{mgweights}) when $p\in(\underline p, 0]$. Recall that
the L\'evy measure $m$ was defined in (\ref{m(dx)}) and the definition
of the martingale weights
(\ref{mgweights}) gives
\[
y_i^{(p)}\bigl(\ell^{(p,z)}\bigr) = \bigl|\Pi_i\bigl(\ell^{(p,z)}\bigr)\bigr|^{(p+1)}e^{\Phi
(p)\ell^{(p,z)}_i}.
\]

\begin{theorem}\label{T: SandWLLN} Fix $p\in(\underline{p}, 0]$. Let
$f\dvtx[0,\infty)\rightarrow[0,\infty)$ such that $f(0)=0$.
Suppose $f(x)\leq Ce^{\epsilon x}$ for some $C>0$ and $\epsilon>0$
satisfying $|\Phi(p-\epsilon)|<\infty.$ Then
%
\begin{eqnarray}
&&\lim_{z\uparrow\infty} \sum_i y_i^{(p)}\bigl(\ell^{(p,z)}\bigr) f \bigl(-\log
\bigl|\Pi_i\bigl(\ell^{(p,z)}\bigr)\bigr| -c_{p}\ell^{(p,z)}_i - z\bigr)\nonumber
\\[-8pt]
\\[-8pt]
&& \qquad  = Q^{(p)}(f)
{W(\infty, p)}
\label{intwosenses}
\nonumber
\end{eqnarray}
in probability where
%
\begin{equation}
Q^{(p)}(f) = \frac{1}{\Phi'(p) -c_p} \int_{(0,\infty)}\biggl (\int
_0^y f(t)\,dt  \biggr)e^{-py}m(dy).
\label{smallpQp}
\end{equation}

If $f$ is uniformly bounded, then the above convergence may be upgraded
to almost sure convergence.
\end{theorem}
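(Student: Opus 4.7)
My strategy is to pin down the mean of the random sum via the many-to-one tilting identity of Lemma~\ref{L: proba tilt} combined with classical renewal theory for subordinators, and then to upgrade to convergence in probability by decomposing along the coarser stopping line $\ell^{(p,z-a)}$ and appealing to the $L^1$ martingale convergence $W(\ell^{(p,z)},p)\to W(\infty,p)$ from Theorem~\ref{stopmgs}(ii). Set $R_i^z:=-\log|\Pi_i(\ell^{(p,z)})|-c_p\ell^{(p,z)}_i-z$, the overshoot of block $i$ above the line $x=z+c_pt$. Lemma~\ref{L: proba tilt} applied with $g(x,t):=f(x-c_pt-z)$ yields
$$\E\Bigl[\sum_i y_i^{(p)}(\ell^{(p,z)})\,f(R_i^z)\Bigr] = \E^{(p)}\bigl[f(Y_{\tau_z}-z)\bigr],$$
where $Y_t:=\xi_t-c_pt$ and $\tau_z:=\inf\{t:Y_t>z\}$. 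Since $p\in(\underline p,0]$ one has $c_p\le 0$, so under $\P^{(p)}$ the process $Y$ is a subordinator with pure-jump Lévy measure $m^{(p)}(dx)=e^{-px}m(dx)$ and nonnegative drift $-c_p$, of finite strictly positive mean $\Phi'(p)-c_p$. Since $f(0)=0$, the classical renewal theorem for first-passage overshoots of subordinators (see e.g.\ Chapter~III of Bertoin \cite{Be3}) gives
$$\lim_{z\to\infty}\E^{(p)}[f(Y_{\tau_z}-z)] \;=\; \frac{1}{\Phi'(p)-c_p}\int_0^\infty f(x)\,m^{(p)}(x,\infty)\,dx \;=\; Q^{(p)}(f),$$
the second equality by Fubini. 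The growth hypothesis $f(x)\le Ce^{\epsilon x}$ with $|\Phi(p-\epsilon)|<\infty$ guarantees $\int e^{\epsilon x}m^{(p)}(dx)<\infty$ and hence uniform integrability of the overshoot.

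\emph{Decomposition along a coarser line.} For $a>0$ and $z>a$, the extended fragmentation property (Lemma~\ref{L:Markov}) applied at the stopping line $\ell^{(p,z-a)}$ shows that, conditionally on $\mathcal{F}_{\ell^{(p,z-a)}}$, each block $\Pi_i(\ell^{(p,z-a)})$ initiates an independent fresh copy of the fragmentation, and on the event $\{R_i^{z-a}<a\}$ the part of $\ell^{(p,z)}$ descending from block $i$ is exactly the first-passage stopping line across level $a-R_i^{z-a}$ in that fresh copy. This gives the key identity
$$\sum_i y_i^{(p)}(\ell^{(p,z)})\,f(R_i^z) = \sum_i y_i^{(p)}(\ell^{(p,z-a)})\,S^{(i)}(a-R_i^{z-a}),$$
where, given $\mathcal{F}_{\ell^{(p,z-a)}}$, the $S^{(i)}(\cdot)$ are independent copies of $h\mapsto S(h):=\sum_j y_j^{(p)}(\ell^{(p,h)})\,f(R_j^h)$, with $\E[S^{(i)}(h)\mid\mathcal{F}_{\ell^{(p,z-a)}}]=m(h)$ and $m(h)\to Q^{(p)}(f)$ by the first step.

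\emph{Passage to the limit.} A further many-to-one argument on a tail truncation shows that the weighted overshoots are uniformly tight: for every $\varepsilon>0$ there is $K$ such that $\sum_{i:R_i^{z-a}>K}y_i^{(p)}(\ell^{(p,z-a)})<\varepsilon$ with probability exceeding $1-\varepsilon$, uniformly in $z>a+K$. Choosing $a$ large enough that $|m(h)-Q^{(p)}(f)|<\varepsilon$ for $h\ge a-K$, the conditional mean $\sum_i y_i^{(p)}(\ell^{(p,z-a)})\,m(a-R_i^{z-a})$ differs from $Q^{(p)}(f)\,W(\ell^{(p,z-a)},p)$ by $O(\varepsilon\,W(\ell^{(p,z-a)},p))$, so letting $z\to\infty$ and then $a\to\infty$, the $L^1$ convergence of Theorem~\ref{stopmgs}(ii) delivers $Q^{(p)}(f)\,W(\infty,p)$. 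The conditional fluctuation is controlled by
$$\mathrm{Var}\Bigl(\sum_i y_i^{(p)}\,S^{(i)}\,\Big|\,\mathcal{F}_{\ell^{(p,z-a)}}\Bigr) \le \max_i y_i^{(p)}(\ell^{(p,z-a)})\cdot\sum_i y_i^{(p)}(\ell^{(p,z-a)})\,\mathrm{Var}(S^{(i)}),$$
where $\max_i y_i^{(p)}(\ell^{(p,z-a)})\to 0$ in probability as $z\to\infty$ (because the fragmentation produces infinitely many blocks on the line) while the remaining factor stays bounded on a good event (immediate for bounded $f$, and via truncation combined with the exponential moment of the first step otherwise).

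\emph{Main obstacle and a.s.\ reinforcement.} The principal technical hurdle is this fluctuation control, as $L^2$ convergence of the additive martingale is not available under our hypotheses; the weight-maximum estimate above serves as a substitute for the missing $L^2$ bound. To promote convergence in probability to almost sure convergence when $f$ is bounded, one applies the in-probability statement along a geometric sequence $z_n\uparrow\infty$ together with a conditional Borel--Cantelli argument, closing the gaps by the monotone structure of the family $(\ell^{(p,z)},z\ge 0)$.
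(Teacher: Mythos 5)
Your first step — the many-to-one tilting to $\E^{(p)}[f(Y_{\tau_z}-z)]$, the observation that $Y$ is a subordinator with finite mean $\Phi'(p)-c_p$ for $p\in(\underline p,0]$, the use of $f(0)=0$ to kill the creeping contribution, and the renewal-theoretic identification of $Q^{(p)}(f)$ — matches the paper's computation of $\lim_z\E(W(\ell^{(p,z)},p,f))$ exactly. After that, however, you diverge: the paper first proves the \emph{almost sure} law for bounded $f$ by porting the argument of Harris, Knobloch and Kyprianou (HKK) to these stopping lines, with a replacement of HKK's second-moment Lemma~5 by a martingale argument using $\E(W(\infty,p)^q)<\infty$ for some $q>1$; it then deduces the \emph{in-probability} statement for general $f$ by a Fatou argument (bounded truncations give $\liminf \ge Q^{(p)}(f)W(\infty,p)$ a.s.\ with equality of means, hence a.s.\ equality of the $\liminf$, then $\Theta_z:=W(\ell^{(p,z)},p,f)-\inf_{u\ge z}W(\ell^{(p,u)},p,f)\to 0$ in probability by monotone convergence of means). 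You invert this order and replace it with a branching decomposition along $\ell^{(p,z-a)}$ plus a conditional variance bound.

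The central gap in your variant is precisely the point the paper goes out of its way to avoid: second moments are not available. Your conditional variance bound
\[
\mathrm{Var}\Bigl(\sum_i y_i^{(p)}\,S^{(i)}\,\Big|\,\mathcal{F}_{\ell^{(p,z-a)}}\Bigr) \le \max_i y_i^{(p)}(\ell^{(p,z-a)})\cdot\sum_i y_i^{(p)}(\ell^{(p,z-a)})\,\mathrm{Var}(S^{(i)})
\]
is vacuous unless $\mathrm{Var}(S^{(i)})<\infty$, and even for bounded $f$ this requires $\E\bigl[W(\ell^{(p,h)},p)^2\bigr]<\infty$, which is not guaranteed: under the paper's hypotheses one only knows $\E(W(\infty,p)^q)<\infty$ for \emph{some} $q>1$ that need not reach $2$. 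You acknowledge that ``$L^2$ convergence of the additive martingale is not available'' but then assert the max-weight estimate ``serves as a substitute,'' which it does not — the factorization still requires the per-block variances to be finite. To make your decomposition rigorous one would have to replace the variance by a $q$-th centred moment for $q\in(1,2]$ and use a von Bahr–Esseen type inequality, $\E|\sum_i a_i(X_i-\E X_i)|^q\le C_q\sum_i a_i^q\E|X_i-\E X_i|^q\le C_q(\max_i a_i)^{q-1}\sum_i a_i\E|X_i-\E X_i|^q$, which then does close with Bertoin's $q$-th moment bound; as written, the step fails. The final paragraph on a.s.\ reinforcement is also too thin: you would need the concentration probabilities along the geometric subsequence to be summable (the above $q$-moment bound again) and an actual argument to close the gaps, since $z\mapsto W(\ell^{(p,z)},p,f)$ is not monotone for general bounded $f$; ``by the monotone structure of the family'' does not by itself furnish the sandwich. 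In short, the route is plausible in outline but as stated carries two genuine gaps, both stemming from implicitly assuming square integrability the paper deliberately avoids.
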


\begin{pf}
For convenience we shall define
\[
W\bigl(\ell^{(p,z)}, p, f\bigr)
:=
\sum_{i}y^{(p)}_i\bigl(\ell^{(p,z)}\bigr) f\bigl(-\log\bigl|\Pi_i\bigl(\ell^{(p,z)}\bigr)\bigr| -
c_p\ell^{(p,z)}_i -z\bigr).
\]
Note that from the many-to-one principle in Lemma \ref{L: proba tilt} that
%
\begin{equation}
\mathbb{E}\bigl(W\bigl(\ell^{(p,z)}, p, f\bigr)\bigr) = \mathbb{E}^{(p)}\bigl(f(Y_{\tau_z} -z)\bigr),
\label{fromMT1}
\end{equation}
where $\tau_z = \inf\{t>0\dvtx Y_t >z\}$ and for $t\geq0$, $Y_t = \xi_t
- c_pt$. The process $Y$ is in fact a subordinator on account of the
fact that when $p\in(\underline p, 0]$, $c_p\leq0$. Moreover, it has
finite mean with $\mathbb{E}^{(p)}(Y_t) = (\Phi'(p) -c_p)t$ for
$t\geq0$. Note also that the assumption that $f(0)=0$ implies that the
expectation on the right-hand side of (\ref{fromMT1}) does not include
the possible contribution that comes from the event that $Y$ creeps
over $z$.

Let us first prove that if $f(x)\leq Ce^{\epsilon x}$ for some $C>0$
and $\epsilon>0$ satisfying $|\Phi(p-\epsilon)|<\infty$ (and, in
particular, for uniformly bounded $f$), we have that
%
\begin{equation}
\lim_{z\uparrow\infty} \mathbb{E}\bigl(W\bigl(\ell^{(p,z)}, p, f\bigr)\bigr) = Q^{(p)}(f).
\label{simon's assumption}
\end{equation}
A classical result from the theory of subordinators (cf. \cite{Be1},
Chapter~3) tells us that for $y>0$
\[
\mathbb{P}^{(p)}(Y_{\tau_z} - z \in dy) = \int_{[0,z)}U^{(p)}(dx)
m^{(p)}(z-x+dy),
\]
where $U^{(p)}$ is the potential measure associated with $Y$ under
$\mathbb{P}^{(p)}$, meaning that $U^{(p)}(dx) = \int_0^\infty\mathbb
{P}^{(p)}(Y_t \in dx)\,dt$ and we recall that $m^{(p)}(dx) = e^{-px}
m(dx)$ is the L\'evy measure of $\xi$ under $\mathbb{P}^{(p)}$.
Hence, it follows that
\[
\mathbb{E}\bigl(W\bigl(\ell^{(p,z)}, p, f\bigr)\bigr) = U^{(p)}*g(z),
\]
where
\[
g(u) = \int_{(u,\infty)}f(y-u) m^{(p)}(dy).
\]

It can also be shown that $V(dx): = U^{(p)}(dx) + \delta_0(dx)$ is a
classical renewal measure of a renewal process with mean inter-arrival
time given by $\mathbb{E}^{(p)}(Y_1)$ (see \cite{kypbook}, Lemma~5.2).
The latter result also indicates that the associated
inter-arrival time of $V$ has distribution $\int_0^\infty e^{-t}
\mathbb{P}^{(p)}(Y_t \in dx)\,dt$ and hence, an easy computation shows
that the mean inter-arrival time is equal to $\mathbb
{E}^{(p)}(Y_1)=\Phi'(p)-c_p$. Applying the Key Renewal Theorem to
$V*g(z)$, we deduce that, whenever $g$ is directly Riemann integrable,
\begin{eqnarray*}
\lim_{z\uparrow\infty} \mathbb{E}\bigl(W\bigl(\ell^{(p,z)}, p, f\bigr)\bigr) &=& \frac
{1}{ \mathbb{E}^{(p)} (Y_1)}
\int_0^\infty g(t)\,dt\\ &= &\frac{1}{ \mathbb{E}^{(p)} (Y_1)} \int
_{(0,\infty)} \biggl(\int_0^y f(t)\,dt  \biggr)m^{(p)}(dy).
\end{eqnarray*}
Note that $g$ has no discontinuities on account of the fact that, for
each $u>0$, $g(u+)-g(u-) = f(0)m^{(p)}(\{u\})$ which equals zero thanks
to the assumption that $f(0)=0$.
Moreover, thanks to the assumption that $f(x)\leq C^{\epsilon x}$ and
$|\Phi(p-\epsilon)|<\infty$, we have that
\begin{eqnarray*}
\int_0^\infty g(t)\,dt & \leq&\frac{C}{\epsilon} \int_{(0,\infty)}
(e^{\epsilon y} - 1)m^{(p)}(dy) \\
&=& \frac{C}{\epsilon} \int_{(0,\infty)} (1 - e^{-\epsilon
y})e^{-(p-\epsilon)y}m(dy) \\
&=& \frac{C}{\epsilon}  \bigl( \Phi(p) - \Phi(p-\epsilon) \bigr)\\
&<&\infty,
\end{eqnarray*}
which shows that $g$ is directly Riemann integrable. We have thus
established (\ref{simon's assumption}).

Next we turn to establishing the limit (\ref{intwosenses}) in the
almost sure sense when $f$ is uniformly bounded. Harris, Knobloch and
Kyprianou \cite{HKK} show that, when $p=0$, the required strong law of
large numbers holds for all bounded measurable $f$ in the sense of
almost sure convergence. Although we are interested in conservative
fragmentation processes in this paper, the proof of (\ref
{intwosenses}) for the case that $p\in(\underline p, 0)$ is
mathematically similar to the dissipative case that was handled when
$p=0$ in \cite{HKK}. In the notation of \cite{HKK}, the role of the
quantity $X^{1+p^*}_{j,\eta}$ is now played by the martingale weights
$y_j(\ell^{(p,z)}).$ In that case, using (\ref{fromMT1}) in place of
the limit (9) in \cite{HKK}, all of the proofs go through verbatim, or
with obvious minor modification, with the exception of their Lemma~5
which incurs a moment condition. In fact, this moment condition turns
out to be unnecessary as we shall now demonstrate. The aforementioned
lemma requires us to show that, in the notation of the current setting,
%
\begin{equation}
\sup_{z\geq0}\mathbb{E}(W(\ell^z, p)^q)<\infty \qquad \mbox{for some }q>1.
\label{no-moment}
\end{equation}
Thanks to Jensen's inequality, the process $(W(\ell^z, p)^q, z\geq0)$
is a submartingale. Hence, recalling that the almost sure limit of
$W(\ell^z, p)$ is $W(\infty, p)$, as soon as it can be shown that
$\mathbb{E}(W(\infty, p)^q)<\infty$ for some $q>1$, then (\ref
{no-moment}) is satisfied. Note however, the latter has been clearly
established in the proof of Theorem~2 of \cite{Be4} despite the fact
that the aforementioned theorem itself does not state this fact. This
completes the proof of the almost sure convergence (\ref{intwosenses})
for uniformly bounded~$f$.

We shall now obtain the required weak law of large numbers for $W(\ell
^{(p,z)}, p, f)$.
To this end, let us suppose that $\{f_k\dvtx k\geq1\}$ is an increasing
sequence of bounded positive functions such that, in the pointwise
sense, $f_k\uparrow f$. It follows from the aforementioned strong law
of large numbers for each $f_k$ that
\[
\liminf_{z\uparrow\infty} W\bigl(\ell^{(p,z)}, p, f\bigr) \geq\liminf
_{z\uparrow\infty}W\bigl(\ell^{(p,z)}, p, f_k\bigr) = Q^{(p)}(f_k)W(\infty,p)
\]
for all $k$ and hence, by monotone convergence,
\[
\liminf_{z\uparrow\infty} W\bigl(\ell^{(p,z)}, p, f\bigr) \geq
Q^{(p)}(f)W(\infty,p)
\]
almost surely.
Next note by Fatou's Lemma,
\begin{eqnarray*}
0&\leq& \mathbb{E}\Bigl(\liminf_{z\uparrow\infty} W\bigl(\ell^{(p,z)}, p,
f\bigr) - Q^{(p)}(f)W(\infty,p)\Bigr) \\
&=& \mathbb{E}\Bigl(\liminf_{z\uparrow\infty} W\bigl(\ell^{(p,z)}, p, f\bigr)\Bigr) -
Q^{(p)}(f) \\
&\leq& \liminf_{z\uparrow\infty} \mathbb{E}\bigl( W\bigl(\ell^{(p,z)}, p,
f\bigr)\bigr) - Q^{(p)}(f), \\
&=& 0,
\end{eqnarray*}
where the final equality follows by (\ref{simon's assumption}) and
hence, we are led to the conclusion that
%
\begin{equation}
\liminf_{z\uparrow\infty} W\bigl(\ell^{(p,z)}, p, f\bigr) =
Q^{(p)}(f)W(\infty,p)
\label{a.s.liminf}
\end{equation}
almost surely.

Next define for $z\geq0$
\[
\Theta_z = W\bigl(\ell^{(p,z)}, p, f\bigr) - \inf_{u\geq z}W\bigl(\ell^{(p,u)}, p,
f\bigr) \geq0.
\]
Note by (\ref{simon's assumption}), monotone convergence and (\ref
{a.s.liminf}) that
\begin{eqnarray*}
\lim_{z\uparrow\infty}\mathbb{E}(\Theta_z) &=&\lim_{z\uparrow
\infty} \mathbb{E}\bigl(W\bigl(\ell^{(p,z)}, p, f\bigr)\bigr) - \lim_{z\uparrow\infty
}\mathbb{E}\Bigl(\inf_{u\geq z}W\bigl(\ell^{(p,u)}, p, f\bigr) \Bigr)\\
&=& Q^{(p)}(f) - \mathbb{E}\Bigl(\liminf_{z\uparrow\infty}W\bigl(\ell
^{(p,z)}, p, f\bigr) \Bigr)\\
&=&0.
\end{eqnarray*}
It now follows as a simple consequence from the Markov inequality that
$\Theta_z$ converges in probability to zero. Using the latter,
together with the almost sure convergence in (\ref{a.s.liminf}), it
follows that
\begin{eqnarray*}
&&W\bigl(\ell^{(p,z)}, p, f\bigr) - Q^{(p)}(f)W(\infty,p)\\
&& \qquad  = \Theta_z + \inf
_{u\geq z}W\bigl(\ell^{(p,u)}, p, f\bigr)- Q^{(p)}(f)W(\infty,p)
\end{eqnarray*}
converges in probability, to zero as $z\uparrow\infty$.
\end{pf}

\begin{rem}
In the above proof, when dealing with the almost sure convergence (\ref
{intwosenses}) for uniformly bounded $f$, the replacement argument we
offer for Lemma~5 of Harris, Knobloch and Kyprianou \cite{HKK} applies
equally well to the case that the fragmentation process is dissipative.
This requires however, some bookkeeping based around the computations
in Bertoin \cite{Be4}, which assumes conservativeness, in order to
verify that the $L^q$ estimates are still valid. The consequence of
this observation is that the moment condition (A3) in \cite{HKK} is
unnecessary. In fact, the aforementioned condition (A3) is equivalent
to the condition that the dislocation measure has finite total mass in
that context.
\end{rem}

The next result gives us a strong law with respect to the weights (\ref
{mgweights}) for the regime $p\in(0, \overline p]$. For this we recall
also that the measure $D^{(p)} $ was defined for $p\in(0,\overline p]$
in Proposition \ref{P:CMJ-conditions} by
\[
D^{(p)}(dx) = \sum_{i}\delta_{\{-(p+1)^{-1}\log y_i(\ell^{(p,0)})
\in dx\}}
\]
and its intensity was denoted by $\mu^{(p)}$.
It is also worth recalling that the L\'evy measure associated with the
tagged fragment $\xi_\cdot: = -{\log}|\Pi_1(\cdot)|$ is denoted by
$m(dx)$ for $x>0$. Moreover, under the measure $\mathbb{P}^{(p)}$
where $p>\underline{p}$, the aforementioned L\'evy measure takes the
form $e^{-px}m(dx)$ for $x>0$.

\begin{theorem}\label{T: SLLN}
Fix $p\in(0,\op]$. Let $f\dvtx[0,\infty)\rightarrow[0,\infty)$ such
that $f(x)\leq Ce^{\epsilon x}$ for some $C>0$ and $\epsilon>0$
satisfying $|\Phi(p-\epsilon)|<\infty$. Then, almost surely,
%
\begin{equation}
\lim_{z\uparrow\infty} \frac{\sum_i y_i^{(p)}(\ell^{(p,z)}) f
(-{\log}|\Pi_i(\ell^{(p,z)})| -c_{p}\ell^{(p,z)}_i - z) }{W(\ell
^{(p,z)}, p)} = Q^{(p)}(f),
\label{E: SLLN}
\end{equation}
where
%
\begin{equation}
Q^{(p)}(f) = \frac{\int_0^\infty\int_u^\infty e^{-(p+1)y } f(y-u)
\mu^{(p)}(dy)\,du}{\int_{(0,\infty)} y e^{-(p+1)y } \mu^{(p)}(dy) }.
\label{bigpQp}
\end{equation}
\end{theorem}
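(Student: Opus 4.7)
The plan is to leverage the CMJ embedding of Theorem \ref{embedded-CMJ} and invoke Nerman's strong law of large numbers for characteristic processes of a general branching process \cite{nerman}. For $p\in(0,\overline p]$, by Theorem \ref{embedded-CMJ}, the process $z\mapsto\Pi(\ell^{(p,z)})$ is precisely the coming generation at level $z$ of a CMJ tree whose typical individual reproduces with intensity $\mu^{(p)}$. Labelling tree individuals by $x$ with birth times $\sigma_x$, the identity $c_p=\Phi(p)/(p+1)$ yields that the birth times of coming-generation individuals are exactly $b_i=-\log|\Pi_i(\ell^{(p,z)})|-c_p\ell^{(p,z)}_i$, and moreover
\[
y_i^{(p)}(\ell^{(p,z)})=e^{-(p+1)b_i}.
\]

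The second step is to express the sums appearing in (\ref{E: SLLN}) as Nerman characteristic processes. For any measurable test function $g:[0,\infty)\to\R$, define the random characteristic
\[
\phi^g_x(t):=\sum_{y\in\mathrm{children}(x)} e^{-(p+1)(T_y-t)}\,g(T_y-t)\,\mathbf{1}_{\{T_y>t\}},\qquad t\ge 0,
\]
where $T_y$ is the age of $x$ at the birth of the child $y$. A direct rearrangement using $\sigma_y=\sigma_x+T_y$ produces the identity
\[
\sum_i y_i^{(p)}(\ell^{(p,z)})\,g(b_i-z)=e^{-(p+1)z}Z^{\phi^g}(z),
\]
where $Z^{\phi^g}(z):=\sum_x\phi^g_x(z-\sigma_x)$ is the Nerman characteristic process. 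In particular, taking $g\equiv 1$ gives $W(\ell^{(p,z)},p)=e^{-(p+1)z}Z^{\phi^{\mathbf 1}}(z)$, so the left-hand side of (\ref{E: SLLN}) is exactly the ratio $Z^{\phi^f}(z)/Z^{\phi^{\mathbf 1}}(z)$.

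Applying Nerman's theorem \cite{nerman} together with its ratio corollary, both $e^{-(p+1)z}Z^{\phi^g}(z)$ for $g=f$ and $g=\mathbf{1}$ converge almost surely to $m_{\phi^g}W_\infty$, where $W_\infty$ is a common non-negative limit random variable and
\[
m_{\phi^g}=\frac{\int_0^\infty e^{-(p+1)u}\E[\phi^g(u)]\,du}{\int_0^\infty u\,e^{-(p+1)u}\,\mu^{(p)}(du)}.
\]
A straightforward Fubini computation yields
\[
\int_0^\infty e^{-(p+1)u}\E[\phi^g(u)]\,du=\int_0^\infty e^{-(p+1)y}\Bigl(\int_0^y g(v)\,dv\Bigr)\mu^{(p)}(dy),
\]
which reduces to $\int_0^\infty y\,e^{-(p+1)y}\mu^{(p)}(dy)$ when $g\equiv\mathbf{1}$. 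Dividing and comparing with (\ref{bigpQp}) identifies the limit as $Q^{(p)}(f)$.

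The main obstacle is the verification of Nerman's integrability and dominating-function hypotheses for the random characteristic $\phi^f$: besides integrability of $e^{-(p+1)u}\E[\phi^f(u)]$, one needs an $e^{-(p+1-\delta)u}$-type dominating bound for some $\delta>0$. This is precisely where the joint growth assumption $f(x)\le Ce^{\epsilon x}$ together with $|\Phi(p-\epsilon)|<\infty$ plays its role: via Proposition \ref{P:CMJ-conditions}(ii) it supplies exactly the sub-Malthusian Laplace-transform slack that Nerman's theorem requires. A secondary delicacy is the boundary case $p=\overline p$, at which the additive fragmentation martingale $W(t,\overline p)$ degenerates; nevertheless the embedded CMJ remains supercritical with Malthusian parameter $\overline p+1>0$, so Nerman's framework still furnishes a non-degenerate $W_\infty$ on the event of non-extinction, and the ratio law delivers the announced deterministic limit.
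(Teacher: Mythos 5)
Your proposal mirrors the paper's proof almost step for step: you identify the CMJ embedding from Theorem \ref{embedded-CMJ}, write the stopping-line sums as the Nerman characteristic count $e^{-(p+1)z}Z^{\phi^g}(z)$ with exactly the same random characteristic, invoke Nerman's ratio strong law, and locate the required integrability through Proposition \ref{P:CMJ-conditions}(ii) and the growth hypothesis on $f$. The only difference is that you leave the verification of Nerman's dominating-characteristic condition at the level of a sketch rather than carrying out the explicit estimate $\mathbb{E}\bigl(\sup_{u\geq 0} e^{(p+1-\beta)u}\int_{(u,\infty)} e^{-(p+1)y} f(y-u)D^{(p)}(dy)\bigr)\leq C\int_{(0,\infty)} e^{-(p+1-\epsilon)y}\mu^{(p)}(dy)$, but the strategy and all key ingredients coincide with the paper's argument.
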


\begin{pf} First recall from Theorem \ref{embedded-CMJ} that when
$p\in(0,{\op}]$, the sequence of stopping lines $(\ell^{(p,z)}\dvtx
z\geq0)$ sweeps out the coming generation of an embedded CMJ process
with Malthusian parameter $p+1$ and whose associated birth process is
described by the point process $D^{(p)} (\cdot)$.
For the aforementioned CMJ process we denote by $\{\sigma_i \dvtx i\geq0\}
$ the birth times of individuals, where the enumeration is in order of
birth times starting with the initial ancestor, counted as $0$, having
birth time $\sigma_0=0$. Define
\[
\phi^f_0(u) = \mathbf{1}_{\{u>0\}}e^{(p+1)u }\int_{(u,\infty)}
e^{-(p+1)y}f(y-u)D^{(p)}(dy).
\]
Then, following Jagers' classical theory of counting with
characteristics  (cf. Jagers \cite{jagers}), our CMJ processes have
count at time $z\geq0$ given by
\[
\eta^f_z := \sum_{i\dvtx \sigma_i \leq z}\phi^f_i(z- \sigma_i),
\]
where, for each $i$,
$\phi^f_i$ has the same definition as $\phi^f_0$ except that the
counting measure $D^{(p)} $ is replaced by the counting measure of the
point process describing the age of the $i$th individual at the moment
it reproduces. In this respect, the characteristics $\{\phi_i\dvtx i\}$
are i.i.d. Writing $\{\sigma_j^i \dvtx j\geq1\}$ for the ages at which
the $i$th individual reproduces and $\mathcal{C}_z$ for the index set
of individuals which form the coming generation at time $z\geq0$, we have
\begin{eqnarray*}
e^{-(p+1)z}\eta_z^f &=& \sum_{i\dvtx \sigma_i \leq z} e^{-(p+1)\sigma
_i}\sum_{j\dvtx \sigma^i_j > z-\sigma_i}
e^{-(p+1)\sigma_j^i}f(\sigma^i_j + \sigma_i - z)\\
&=&\sum_{k\dvtx k\in\mathcal{C}_z} e^{-(1+p)\sigma_k}f(\sigma_k - z)\\
&=&\sum_i y_i^{(p)}\bigl(\ell^{(p,z)}\bigr) f\bigl (-\log\bigl|\Pi_i\bigl(\ell^{(p,z)}\bigr)\bigr|
-c_{p}\ell^{(p,z)}_i - z\bigr),
\end{eqnarray*}
%
where the final equality follows from Theorem \ref{embedded-CMJ}. In
the particular case that $f$ is identically equal to unity, we denote
$\eta^f_z$ by $\eta^1_z$ and we see that $e^{-(p+1)z}\eta^1_z =
W(\ell^{(p,z)}, p)$.

Recall that $\mu^{(p)}$ was defined in Proposition \ref
{P:CMJ-conditions} as the intensity of the counting measure $D^{(p)} $.
The strong law of large numbers (\ref{E: SLLN}) now follows from the
classical strong law of large numbers for CMJ processes given in \cite
{nerman}, Theorem~6.3, which says that
%
\begin{equation}
\lim_{z\uparrow\infty} \frac{\eta^f_z }{\eta^1_z} =
\frac{\int_0^\infty\int_u^\infty e^{-(p+1)y } f(y-u) \mu^{(p)}(dy)\,du}{\int_{(0,\infty)} y e^{-(p+1)y } \mu^{(p)}(dy) }
\label{nerman-limit}
\end{equation}
almost surely
provided that the following two conditions hold for some $\beta<p+1$. First,
\[
\int_{(0,\infty)} e^{-\beta z } \mu^{(p)}(dz) <\infty
\]
and second,
\[
\mathbb{E} \biggl(\sup_{u\geq0} e^{(p+1-\beta) u}\int_{(u,\infty)
} e^{-(p+1) y} f(y-u)D^{(p)}(dy) \biggr)<\infty.
\]
The first condition holds thanks to Proposition \ref
{P:CMJ-conditions}(ii) for all $\beta$ sufficiently close to $p+1$.
For the second condition, we note that when $f(x)\leq Ce^{\epsilon x}$,
we may estimate for all $\beta$ sufficiently close to $p+1$,
\begin{eqnarray*}
&&\mathbb{E} \biggl(\sup_{u\geq0} e^{(p+1-\beta) u}\int
_{(u,\infty) } e^{-(p+1) y} f(y-u)D^{(p)}(dy)  \biggr) \\
&& \qquad \leq C \mathbb{E} \biggl(\sup_{u\geq0} e^{(p+1-\beta-\epsilon)
u}\int_{(u,\infty) } e^{-(p+1-\epsilon) y} D^{(p)}(dy) \biggr)\\
&& \qquad \leq C\mathbb{E} \biggl(\int_{(0,\infty) } e^{-(p+1-\epsilon) y}
D^{(p)}(dy) \biggr)\\
&& \qquad  = C\int_{(0,\infty) } e^{-(p+1-\epsilon) y} \mu^{(p)}(dy),
\end{eqnarray*}
which is finite, again thanks to Proposition \ref{P:CMJ-conditions},
providing $|\Phi(p-\epsilon)|<\infty$.
Note in particular that these conditions also ensure that the
right-hand side of (\ref{nerman-limit}) is positive but neither zero
nor infinity in value.
The proof of part (i) of the theorem is thus complete as soon as we
note that (\ref{nerman-limit}) is the desired limit.
\end{pf}

\section{Exact asymptotics and uniqueness}\label{S: exact}\label{sec6}

In this section we establish the asymptotics of multiplicative
martingale functions in the class $\mathcal{T}_2(p)$ which will
quickly lead to the property of uniqueness within the same class.

For any product martingale function $\psi_p$, with speed $c_p$, where
$p\in(\underline p, \overline p]$, which belongs to the class
$\mathcal{T}_2(p)$, recall that we have defined
\[
L_p(x) = e^{(p+1)x} \bigl(1-\psi_p(x)\bigr).
\]

We start with the following first result.

\begin{theorem}\label{T: sv} Suppose that $p\in(\underline{p}, {\op
}]$ and that $\psi_p\in\mathcal{T}_2(p)$ is a product martingale
function which makes $(M(t,p,x), t \ge0)$ in (\ref{E:prod
mart})  a martingale. Then for all $\beta\ge0$
%
\[
\lim_{x\uparrow\infty}\frac{L_p(x+\beta)}{L_p(x)} =1.
\]
That is to say, $L_p$ is additively slowly varying.
\end{theorem}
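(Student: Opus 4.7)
The plan is to use the uniformly integrable stopping-line martingale $M(\ell^{(p,z)}, p, \cdot)$ of Theorem \ref{stopmgs}(i) to produce a pointwise sandwich for $L_p$, and then exploit the monotonicity of $L_p$ built into $\TT_2(p)$ to collapse that sandwich into slow variation.

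First I would take expectations to get
\[
\psi_p(x) = \mathbb{E}\bigl[\prod_i \psi_p(x + R_i)\bigr],
\]
where $R_i := -\log|\Pi_i(\ell^{(p,z)})| - c_p\ell^{(p,z)}_i \geq z$ by definition of the first-passage stopping line. Setting $a_i := 1 - \psi_p(x+R_i) = e^{-(p+1)(x+R_i)} L_p(x+R_i) \in [0,1)$ and using the elementary bounds
\[
\sum_i a_i - \tfrac{1}{2}\bigl(\sum_i a_i\bigr)^2 \,\leq\, 1 - \prod_i(1 - a_i) \,\leq\, \sum_i a_i,
\]
I take expectations, multiply through by $e^{(p+1)x}$, and recognise that $\mathbb{E}[\sum_i a_i] = e^{-(p+1)x}\,\mathbb{E}[\sum_i y_i^{(p)}(\ell^{(p,z)}) L_p(x+R_i)]$, using the identity $e^{-(p+1)R_i} = y_i^{(p)}(\ell^{(p,z)})$. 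Applying the many-to-one principle (Lemma \ref{L: proba tilt}) with $g(u,s) := L_p(x+u-c_p s)$ then converts the sum into a tagged-fragment expectation:
\[
\mathbb{E}\bigl[\sum_i y_i^{(p)}(\ell^{(p,z)}) L_p(x + R_i)\bigr] = \mathbb{E}^{(p)}\bigl[L_p(x + z + O_z)\bigr],
\]
where $Y_t := \xi_t - c_p t$ under $\mathbb{P}^{(p)}$ and $O_z := Y_{\tau_z} - z \geq 0$ is the overshoot above level $z$ at the first-passage time $\tau_z := \inf\{t : Y_t > z\}$. The output is the pointwise sandwich
\[
\mathbb{E}^{(p)}\bigl[L_p(x + z + O_z)\bigr] - \mathcal{E}(x, z) \,\leq\, L_p(x) \,\leq\, \mathbb{E}^{(p)}\bigl[L_p(x + z + O_z)\bigr],
\]
valid for every $z \geq 0$, where the Taylor residual satisfies $\mathcal{E}(x, z) \leq \tfrac{1}{2} e^{-(p+1)x}\, \mathbb{E}\bigl[\bigl(\sum_i y_i^{(p)}(\ell^{(p,z)}) L_p(x + R_i)\bigr)^2\bigr]$.

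Second, I extract slow variation by combining the sandwich with monotonicity. Since $L_p$ is non-decreasing and $O_z \geq 0$, we have $L_p(x+z+O_z) \geq L_p(x+z)$ almost surely under $\mathbb{P}^{(p)}$, and so $\mathbb{E}^{(p)}[L_p(x+z+O_z)] \geq L_p(x+z)$. Plugging this into the lower half of the sandwich yields $L_p(x+z) \leq L_p(x) + \mathcal{E}(x, z)$; coupled with the trivial $L_p(x) \leq L_p(x+z)$ from monotonicity,
\[
1 \,\leq\, \frac{L_p(x+z)}{L_p(x)} \,\leq\, 1 + \frac{\mathcal{E}(x, z)}{L_p(x)},
\]
for each fixed $z \geq 0$. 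Provided $\mathcal{E}(x, z) = o(L_p(x))$ as $x \to \infty$, the ratio $L_p(x+z)/L_p(x)$ tends to $1$ for every $z \geq 0$. The case $\beta < 0$ then follows by substituting $x \to x+\beta$ with $z = -\beta$.

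The main obstacle is verifying that $\mathcal{E}(x, z) = o(L_p(x))$ as $x \to \infty$. Since $\sum_i y_i^{(p)}(\ell^{(p,z)})$ is the unit-mean martingale $W(\ell^{(p,z)}, p)$ (Theorem \ref{stopmgs}(ii)), the task reduces to controlling $L_p$ inside a squared $W$-weighted sum. In the subcritical regime $p \in (\underline p, \overline p)$, $W(\ell^{(p,z)}, p)$ has $L^q$-moments for some $q>1$ from the estimates of Bertoin \cite{Be4}, and $L_p$ is ultimately bounded because $\psi_p(x) \to 1$ at rate $e^{-(p+1)x}$ is consistent with $W(\infty,p) \in L^1$ being non-trivial; Cauchy--Schwarz then gives $\mathcal{E}(x,z) = O(e^{-(p+1)x})$, which is comfortably $o(L_p(x))$. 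The critical case $p = \overline p$ is more delicate because $L_p$ may grow unboundedly and $W(\ell^{(\overline p, z)}, \overline p)$ is only $L^1$-bounded; here I anticipate a bootstrap, starting from the crude a priori bound $L_p(x) \leq e^{(p+1)x}$ (forced by $\psi_p \in [0,1]$), using the sandwich to successively upgrade this to sub-exponential and then at most polynomial growth, after which the second-moment estimate for the error becomes tractable and the clean conclusion of the previous paragraph applies.
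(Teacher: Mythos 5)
Your strategy is genuinely different from the paper's, and the difference matters. The paper argues by contradiction using the \emph{exact} telescopic decomposition
\[
1-\prod_i b_i \;=\; \sum_i (1-b_i)\prod_{j<i}b_j, \qquad b_i := \psi_p(x-\log|\Pi_i(\ell^{(p,z)})|-c_p\ell_i^{(p,z)}),
\]
divides by $L_p(x_k)$ along a subsequence where $L_p(x_k+\beta_0)/L_p(x_k)\to\limsup>\eta>1$, and then applies Fatou's lemma (twice), exploiting that each factor $\prod_{j<i}b_j\to 1$ pointwise as $x_k\to\infty$. Because the telescopic identity carries no remainder, the argument needs only that $\mathbb{E}\,W(\ell^{(p,z)},p)=1$; no moment control and no growth information about $L_p$ are required. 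Your Bonferroni sandwich $\sum a_i - \tfrac12(\sum a_i)^2 \le 1-\prod(1-a_i)\le\sum a_i$ replaces this exact identity by a two-sided inequality, which is a perfectly reasonable instinct, but it introduces the quadratic error term $\mathcal{E}(x,z)$ that you must then show is $o(L_p(x))$.

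This is precisely where there is a genuine gap. To bound $\mathcal{E}(x,z)=\tfrac12 e^{-(p+1)x}\mathbb{E}\bigl[\bigl(\sum_i y_i^{(p)}(\ell^{(p,z)})L_p(x+R_i)\bigr)^2\bigr]$ you invoke boundedness of $L_p$ for $p\in(\underline p,\overline p)$, reasoning that $\psi_p(x)\to 1$ at rate $e^{-(p+1)x}$ ``is consistent with'' $W(\infty,p)$ being nontrivial. That is circular: boundedness of $L_p$, i.e. the statement $1-\psi_p(x)=O(e^{-(p+1)x})$, is exactly the asymptotic conclusion of Theorem~\ref{T:L cvg to cstt}, which is proved \emph{after} and \emph{using} the slow variation you are trying to establish here. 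A priori, $\psi_p\in\mathcal{T}_2(p)$ only gives that $L_p$ is increasing with $L_p(-\infty)=0$; nothing forces $L_p$ to stay bounded. Even granting boundedness, your estimate requires $\mathbb{E}[W(\ell^{(p,z)},p)^2]<\infty$, whereas the paper only guarantees $L^q$ moments for \emph{some} $q>1$, not $q=2$. Worse, in the critical case $p=\overline p$ the paper shows $L_{\overline p}(x)\sim k_{\overline p}x$, so $L_p$ is genuinely unbounded, and your ``bootstrap from $L_p(x)\le e^{(p+1)x}$ to sub-exponential to polynomial'' is stated but not executed; the upper-half of your sandwich $L_p(x)\le\mathbb{E}^{(p)}[L_p(x+z+O_z)]$ goes the wrong way for iteration (the right side grows with $z$), so it is not clear the bootstrap closes. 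The structural insight of your plan (stopping-line identity plus monotonicity forcing the ratio down to $1$) is aligned with the paper's, but the quadratic remainder you introduce demands information that is unavailable at this stage; the paper's Fatou/contradiction argument is what lets one avoid that remainder entirely.
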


\begin{pf}
The proof is an adaptation of arguments found in
\cite{kyp88}. First note that the monotonicity of $L_p$ implies that
for all $\beta\ge0$
\[
\limsup_{x\uparrow\infty}\frac{L_p(x+\beta)}{L_p(x)}\geq \liminf
_{x\uparrow\infty}\frac{L_p(x+\beta)}{L_p(x)}\geq1.
\]
In turn, this implies that for each $\beta\ge0$ there exists an
increasing subsequence $\{x_k(\beta)\dvtx k\geq1\}$ tending to infinity
along which we have the following limit:
\[
\lim_{k\uparrow\infty}\frac{L_p(x_k(\beta)+\beta)}{L_p(x_k(\beta
))} =\limsup_{x\uparrow\infty}\frac{L_p(x+\beta)}{L_p(x)}\geq1.
\]

Suppose now that there exists a $\beta_0>0$ and $\eta>1$ such that
\[
\lim_{k\uparrow\infty}\frac{L_p(x_k(\beta_0) +\beta
_0)}{L_p(x_k(\beta_0))}>\eta.
\]
The monotonicity of $L_p$ implies that
for all $\beta\geq\beta_0$
\[
\liminf_{k\uparrow\infty}\frac{L_p(x_k(\beta_0) +\beta
)}{L_p(x_k(\beta_0))} = \lim_{k\uparrow\infty}\frac{L_p(x_k(\beta
_0) +\beta_0)}{L_p(x_k(\beta_0))}> \eta.
\]

The crux of the proof will be to show that this leads to a contradiction.
To this end, recall the identity (\ref{constexp}) in the proof of
Theorem \ref{stopmgs}. Starting from this expression and
with the help of a telescopic sum we have that for all $\beta\in
\mathbb{R}$
\begin{eqnarray*}
1-\psi_p(x) &=& \mathbb{E}\sum_{i}\bigl[ 1 - \psi_p\bigl(x -\log\bigl|\Pi_i\bigl(\ell
^{(p,z)}\bigr)\bigr| -c_p\ell^{(p,z)}_i\bigr)\bigr]\\
&&{}\times\prod_{j<i}\psi_p\bigl(x -\log\bigl|\Pi_j\bigl(\ell^{(p,z)}\bigr)\bigr| -c_p \ell^{(p,z)}_j\bigr).
\end{eqnarray*}
Recalling the definition of $L_p$, it follows easily that
%
\begin{eqnarray}
\label{ktoinf}
1&=& \mathbb{E} \biggl[\sum_{i}\frac{L_p(x_k(\beta_0) -{\log}|\Pi
_i(\ell^{(p,z)})| -c_p\ell^{(p,z)}_i)}{L_p(x_k(\beta_0) )}
\bigl|\Pi_i\bigl(\ell^{(p,z)}\bigr)\bigr|^{p+1}  \nonumber
\\[-8pt]
\\[-8pt]
&&\hspace*{23pt}{}\times
e^{\Phi(p)\ell^{(p,z)}_i}\prod_{j<i}\psi_p\bigl(x_k(\beta_0) -\log\bigl|\Pi_j\bigl(\ell^{(p,z)}\bigr)\bigr|
-c_p \ell^{(p,z)}_j\bigr) \biggr].
\nonumber
\end{eqnarray}
Now pick $z\geq\beta_0$.
Next, recalling that $\psi_p(\infty)=1$ and that $-{\log}|\Pi_i(\ell
^{(p,z)}_i)| -c_p\ell^{(p,z)}_i \ge z\geq\beta_0$, we can take
limits in (\ref{ktoinf}) as $k\uparrow\infty$, applying Fatou's
Lemma twice, to reach the inequality
\[
1\geq\eta\mathbb{E}\sum_{i}\bigl|\Pi_i\bigl(\ell^{(p,z)}\bigr)\bigr|^{p+1}e^{\Phi
(p)\ell^{(p,z)}_i} .
\]
However, Theorem \ref{stopmgs} implies that the expectation above is
equal to unity and we reach a contradiction. We are forced to conclude
that $ \limsup_{x\uparrow\infty} L_p(x+\beta)/L_p(x)=1$ and the
required additive slow variation follows.
\end{pf}

The following lemma is a key ingredient which will help extract exact
asymptotics.

\begin{lemma}\label{L: Lswitch} Fix $p\in(\underline{p}, {\op}]$.
Suppose that $g\dvtx\mathbb{R}\rightarrow(0,\infty)$ is a monotone
increasing function and additive slowly varying at $+\infty$, that is
to say, it satisfies the property that for all $\beta\geq0$
\[
\lim_{x\uparrow\infty}\frac{g(x+\beta)}{g(x)} = 1.
\]
Then
\[
\lim_{z\uparrow\infty} \frac{\sum_{i} y_i^{(p)}(\ell^{(p,z)})
g(x-{\log}|\Pi_i(\ell^{(p,z)})| -c_{p}\ell^{(p,z)}_i ) }{g(x+z)W(\ell
^{(p,z)}, p)} =1,
\]
where the limit is understood almost surely when $p\in(0,\overline p]$
and in probability when $p\in(\underline p, 0]$.
\end{lemma}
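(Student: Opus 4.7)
The plan is to reduce everything to an overshoot variable and apply the laws of large numbers already proved. Set $u_i := -\log|\Pi_i(\ell^{(p,z)})| - c_p \ell^{(p,z)}_i - z$, which is non-negative by the very definition of the first-passage stopping line. Writing $y_i := y_i^{(p)}(\ell^{(p,z)})$ and $W_z := W(\ell^{(p,z)}, p)$, the ratio to control becomes
\[
R_z = \frac{1}{W_z}\sum_i y_i\,\frac{g(x+z+u_i)}{g(x+z)}.
\]
Fix $\epsilon\in(0,1)$ and a truncation level $A>0$, and split $R_z = R_z^{(1)}(A)+R_z^{(2)}(A)$ according to whether $u_i\le A$ or $u_i>A$.

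For $R_z^{(1)}(A)$ (small overshoots), additive slow variation gives the uniform estimate $1\le g(x+z+u)/g(x+z)\le 1+\epsilon$ for all $u\in[0,A]$ and all $z$ sufficiently large, so
\[
(1-\epsilon)\,S_z(A)\ \le\ R_z^{(1)}(A)\ \le\ (1+\epsilon)\,S_z(A),
\qquad S_z(A):=W_z^{-1}\sum_i y_i\,\mathbf{1}_{\{u_i\le A\}}.
\]
Since $\sum_i y_i/W_z=1$, we have $S_z(A)=1-T_z(A)$ with $T_z(A):=W_z^{-1}\sum_i y_i\mathbf{1}_{\{u_i>A\}}$. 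The function $f(u)=\mathbf{1}_{\{u>A\}}$ is bounded and vanishes at $0$, so Theorem \ref{T: SLLN} (for $p\in(0,\overline p]$) or Theorem \ref{T: SandWLLN} (for $p\in(\underline p,0]$) gives $T_z(A)\to Q^{(p)}(\mathbf{1}_{(A,\infty)})$ as $z\uparrow\infty$ in the appropriate almost sure or in-probability sense, and from the explicit formulas for $Q^{(p)}$ dominated convergence yields $Q^{(p)}(\mathbf{1}_{(A,\infty)})\to 0$ as $A\uparrow\infty$.

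For $R_z^{(2)}(A)$ (large overshoots), I would invoke an additive Potter bound: since $g$ is monotone increasing and $g(x+\beta)/g(x)\to 1$ for every $\beta$, the function $\tilde g(y):=g(\log y)$ is classically slowly varying, so for any $\delta>0$ there is $Z_\delta$ with
\[
\frac{g(x+z+u)}{g(x+z)}\ \le\ C\,e^{\delta u}\qquad\text{for all }z\ge Z_\delta,\ u\ge 0.
\]
Choose $\delta>0$ so small that $\Phi(p-\delta)<\infty$ (possible since $\underline p<p$); then $f(u):=\mathbf{1}_{\{u>A\}}e^{\delta u}$ vanishes at $0$ and satisfies the growth hypothesis $f(u)\le e^{\delta u}$ required by Theorem \ref{T: SLLN}/Theorem \ref{T: SandWLLN}. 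Applying the relevant law of large numbers gives
\[
\limsup_{z\uparrow\infty} R_z^{(2)}(A)\ \le\ C\,Q^{(p)}\bigl(\mathbf{1}_{(A,\infty)}e^{\delta\,\cdot}\bigr),
\]
and another dominated-convergence argument shows the right-hand side vanishes as $A\uparrow\infty$.

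Combining the two pieces, $\liminf_{z}R_z\ge 1-Q^{(p)}(\mathbf{1}_{(A,\infty)})$ and $\limsup_{z}R_z\le (1+\epsilon)(1-Q^{(p)}(\mathbf{1}_{(A,\infty)}))+C Q^{(p)}(\mathbf{1}_{(A,\infty)}e^{\delta\,\cdot})$; letting first $A\uparrow\infty$ and then $\epsilon\downarrow 0$ gives $R_z\to 1$ in the appropriate sense. The main obstacle is really just the bookkeeping: verifying the additive Potter bound, checking that the test functions $\mathbf{1}_{(A,\infty)}$ and $\mathbf{1}_{(A,\infty)}e^{\delta\,\cdot}$ meet the hypotheses of Theorems \ref{T: SLLN} and \ref{T: SandWLLN} (in particular $f(0)=0$, which holds since $A>0$), and threading the a.s.\ versus in-probability distinction cleanly through the two regimes for $p$.
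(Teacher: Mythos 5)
Your proof is correct and follows essentially the same strategy as the paper's: split the overshoot $u_i$ at a fixed truncation level (your $A$, the paper's $c$), bound the small-overshoot contribution uniformly via additive slow variation of $g$, control the large-overshoot tail via a Potter-type bound $g(x+z+u)/g(x+z)\le (1+\epsilon_1)e^{\epsilon_2 u}$ (exactly the step the paper attributes to Feller VIII.9, and which you recover by passing through $\tilde g(y)=g(\log y)$), apply Theorem \ref{T: SLLN} or Theorem \ref{T: SandWLLN} to get $Q^{(p)}$-limits, and finally send the truncation level to infinity. The only cosmetic differences are that the paper obtains the lower bound $R_z\ge 1$ directly from monotonicity and $u_i\ge 0$ rather than via your $S_z(A)$ sandwich, and that the paper wraps all of the error in a single three-term estimate before treating the a.s.\ and in-probability regimes separately; your version requires an extra line in the $p\in(\underline p,0]$ case to pass from Theorem \ref{T: SandWLLN}'s unnormalized statement to the ratio $T_z(A)$, using $W_z\to W(\infty,p)>0$, but this is routine.
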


\begin{pf}
The proof will follow closely ideas in Biggins and Kyprianou \cite
{BK97}, Theorem~8.6. First define
\[
\mathcal{I}_c = \bigl\{i \dvtx -\log\bigl|\Pi_i\bigl(\ell^{(p,z)}\bigr)\bigr| -c_{p}\ell
^{(p,z)}_i >z+c\bigr\}.
\]
Then, using the fact that $g$ is increasing and $ -{\log}|\Pi_i(\ell
^{(p,z)})| -c_{p}\ell^{(p,z)}_i\ge z$, we have
%
\begin{eqnarray}
\label{upperest}
 \qquad 1&\leq&\frac{\sum_{i} y_i^{(p)}(\ell^{(p,z)}) g(x-{\log}|\Pi_i(\ell
^{(p,z)})| -c_{p}\ell^{(p,z)}_i ) }{ g(x+z) W(\ell^{(p,z)},
p)}\nonumber
\\[-8pt]
\\[-8pt]
 \qquad &\leq& \frac{g(x+z+c)}{g(x+z) }
+ \frac{
\sum_{i\in\mathcal{I}_c}
y_i^{(p)}(\ell^{(p,z)}) g(x-{\log}|\Pi_i(\ell^{(p,z)})| -c_{p}\ell
^{(p,z)}_i )
}{ g(x+ z) W(\ell^{(p,z)}, p) }.
\nonumber
\end{eqnarray}
As $g$ is additively slowly varying, we may appeal to the classical
representation of slowly varying functions (cf. Feller \cite{feller},
VIII.9) to deduce that for all $\epsilon_1, \epsilon_2 >0$ there
exists a $z_0>0$ such that for all $u>0$
\[
\sup_{z>z_0} \frac{g(z+ u)}{g(z)} \leq(1+\epsilon_1) e^{\epsilon_2 u}.
\]
This allows for the upper estimate on the second term on the right-hand
side of (\ref{upperest}) for all $z$ sufficiently large
%
\begin{eqnarray}
\label{needLLN}
&&\frac{
\sum_{i\in\mathcal{I}_c} y_i^{(p)}(\ell^{(p,z)}) g(x-{\log}|\Pi
_i(\ell^{(p,z)})| -c_{p}\ell^{(p,z)}_i )
}
{
g(x+z) W(\ell^{(p,z)}, p)
}
\nonumber
\\[-8pt]
\\[-8pt]
&& \qquad  \leq
\frac{
\sum_{i} y_i^{(p)} (\ell^{(p,z)}) f_c(-{\log}|\Pi_i(\ell^{(p,z)})|
-c_{p}\ell^{(p,z)}_i-z)
}
{ W(\ell^{(p,z)}, p) },
\nonumber
\end{eqnarray}
where
\[
f_c(x) = \mathbf{1}_{\{x>c\}}(1+\epsilon_1)e^{\epsilon_2x}.
\]

Now note from Theorem \ref{T: SLLN} that the right-hand side of (\ref
{needLLN}) converges almost surely to $Q^{(p)}(f_c)$ when $p\in
(0,\overline p]$, where the definition of $Q^{(p)}(f_c)$ is given by
(\ref{bigpQp}). When $p\in(\underline p, 0]$, the convergence occurs
in probability and $Q^{(p)}(f_c)$ is defined by (\ref{smallpQp}). In
either case, thanks to the appropriate integrability of the function
$e^{\epsilon_2 x}$ for sufficiently small $\epsilon_2$, we have that
$Q^{(p)}(f_c)\downarrow0$ as $c\uparrow\infty$.
Moreover, in both cases, using (\ref{needLLN}) and (\ref{upperest})
we have
\begin{eqnarray*}
0&\leq&\frac{\sum_{i} y_i^{(p)}(\ell^{(p,z)}) g(x-{\log}|\Pi_i(\ell
^{(p,z)})| -c_{p}\ell^{(p,z)}_i - z) }{g(x+z)W(\ell^{(p,z)}, p)}
-1\\
&\leq&  \biggl| \frac{g(x+z+c)}{g(x+z) } -1 \biggr|
+Q^{(p)}(f_c)\\
&&{}+ \biggl|\frac{
\sum_{i} y_i^{(p)} f_c(-{\log}|\Pi_i(\ell^{(p,z)})| -c_{p}\ell^{(p,z)}_i-z)
}
{ W(\ell^{(p,z)}, p) }- Q^{(p)}(f_c) \biggr|.
\end{eqnarray*}
When $p\in(0,\overline p]$, thanks to the preceding remarks, as
$z\uparrow\infty$, the almost sure limit of the right-hand side above
can be made arbitrarily small by choosing $c$ sufficiently large. When
$p\in(\underline p, 0]$, again thanks to the preceding remarks,
we see that for each $\varepsilon>0$, we may choose $c$ sufficiently
large such that
\[
\lim_{z\uparrow\infty}\mathbb{P} \biggl( \biggl|
\frac{\sum_{i} y_i^{(p)}(\ell^{(p,z)}) g(x-{\log}|\Pi_i(\ell
^{(p,z)})| -c_{p}\ell^{(p,z)}_i - z) }{g(x+z)W(\ell^{(p,z)}, p)} -1
 \biggr|>\varepsilon \biggr) = 0,
\]
thus establishing the required convergence in probability.
\end{pf}

We are now ready to establish the asymptotics of multiplicative
martingale functions from which uniqueness will follow.
It has already been shown in Theorem~\ref{T: sv} that $L_p =
e^{(p+1)x} (1-\psi_p(x))$ is additively slowly varying [with $\psi_p,
\in\mathcal{T}_2(p)$ a product martingale function with speed $c_p$].

\begin{theorem}\label{T:L cvg to cstt}
Suppose that $\psi_p$ is any product martingale function in $\mathcal
{T}_2(p)$ with speed $c_p$ [i.e., which makes $M(t,p,x)$ in (\ref{E:prod mart})  a martingale]
such that $p\in(\underline p, \overline p]$. Then there exist some
constants $k_p\in(0,\infty)$ such that
when $p\in(\underline{p}, {\op}),$ we have that
\[
L_p(x)\to k_p  \qquad \mbox{as }x\uparrow\infty
\]
and when $p={\op}$ we have
\[
\frac{L_{\overline p}(x)}{x}\to k_{\overline p} \qquad \mbox{as }x\uparrow
\infty.
\]
\end{theorem}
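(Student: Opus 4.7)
The plan is to execute the heuristic laid out in Section 2.4, using the stopping-line martingale framework of Section 4 and the laws of large numbers of Section 5. Let $\psi_p \in \TT_2(p)$ and $L_p(x) = e^{(p+1)x}(1-\psi_p(x))$; by Theorem \ref{T: sv} this is additively slowly varying, and by the definition of $\TT_2(p)$ it is monotone increasing. The starting point is the uniformly integrable martingale $M(\ell^{(p,z)},p,x)$ of Theorem \ref{stopmgs}(i). On the stopping line each argument of $\psi_p$ exceeds $x+z$, so the Taylor expansion $-\log\psi_p(u) = e^{-(p+1)u}L_p(u)(1+o(1))$ is uniformly valid, and extracting the factor $e^{-(p+1)x}$ gives
\[
-e^{(p+1)x} \log M(\ell^{(p,z)},p,x) = (1+o(1))\sum_i y_i^{(p)}(\ell^{(p,z)})\, L_p\bigl(x-\log|\Pi_i(\ell^{(p,z)})| - c_p \ell^{(p,z)}_i\bigr).
\]
Lemma \ref{L: Lswitch} applied with $g=L_p$ then shows the right-hand sum is asymptotically equivalent to $L_p(x+z)\, W(\ell^{(p,z)},p)$ in probability.

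For $p \in (\underline{p}, \op)$, letting $z \uparrow \infty$ in Theorem \ref{stopmgs} gives $M(\ell^{(p,z)},p,x) \to M(\infty,p,x)$ in probability and $W(\ell^{(p,z)},p) \to W(\infty,p)$ in $L^1$, hence
\[
L_p(x+z)\, W(\infty,p) \longrightarrow -e^{(p+1)x}\log M(\infty,p,x) \quad \text{in probability.}
\]
Since $L_p(x+z)$ is deterministic and $W(\infty,p)>0$ almost surely (Theorem \ref{T:mgcgce}), the limit $k_p := \lim_{z\to\infty} L_p(z)$ must exist in $[0,\infty]$, and $-e^{(p+1)x}\log M(\infty,p,x) = k_p W(\infty,p)$ almost surely. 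Finiteness of $k_p$ follows from $M(\infty,p,x) > 0$ a.s.; strict positivity follows because $\E M(\infty,p,x) = \psi_p(x) < 1$ for all $x$ forces the event $\{M(\infty,p,x)<1\}$ to have positive measure.

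For $p=\op$, the same chain of reasoning yields $L_{\op}(x+z)\, W(\ell^{(\op,z)},\op) \to -e^{(\op+1)x}\log M(\infty,\op,x)$ in probability, but now $W(\infty,\op)=0$ so we need the correct rate for $W(\ell^{(\op,z)},\op)$. Since $c_{\op} = \Phi'(\op)$, the overshoot $O_i := -\log|\Pi_i(\ell^{(\op,z)})| - c_{\op}\ell^{(\op,z)}_i - z$ is non-negative and a direct computation gives the identity
\[
\partial W(\ell^{(\op,z)},\op) = z\, W(\ell^{(\op,z)},\op) + \sum_i O_i\, y_i^{(\op)}(\ell^{(\op,z)}).
\]
Theorem \ref{T: SLLN} applied with $f(y)=y$ shows the overshoot sum is of order $W(\ell^{(\op,z)},\op)$, which is negligible compared to $z W(\ell^{(\op,z)},\op)$. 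Combined with $\partial W(\ell^{(\op,z)},\op) \to \partial W(\infty,\op) > 0$, this forces $z W(\ell^{(\op,z)},\op) \to \partial W(\infty,\op)$ and therefore
\[
\frac{L_{\op}(x+z)}{z} \longrightarrow k_{\op} := \frac{-e^{(\op+1)x}\log M(\infty,\op,x)}{\partial W(\infty,\op)} \in (0,\infty),
\]
with finiteness and positivity argued exactly as before.

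The main technical obstacle is the convergence $\partial W(\ell^{(\op,z)},\op) \to \partial W(\infty,\op)$ in the critical case, because $\partial W$ is only signed on generic stopping lines (cf.\ the Remark after Theorem \ref{stopmgs}) and hence does not admit a direct martingale-projection argument. One must exploit the fact that on the specific line $\ell^{(\op,z)}$ every summand $z + O_i$ is non-negative, so that a Fatou-type inequality combined with the Extended Fragmentation Property (Lemma \ref{L:Markov}) gives comparison with the time-indexed derivative martingale limit and rules out leakage of mass.
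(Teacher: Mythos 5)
Your argument for $p\in(\underline p,\overline p)$ is essentially the paper's: pass to the stopping-line martingale $M(\ell^{(p,z)},p,x)$, expand $-\log\psi_p$ using that arguments exceed $x+z$, apply Lemma~\ref{L: Lswitch} with $g=L_p$, send $z\uparrow\infty$ using $W(\ell^{(p,z)},p)\to W(\infty,p)>0$ and conclude that $L_p(x+z)$ converges to a constant. That part is fine, including your reasoning that $k_p\in(0,\infty)$.

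For $p=\overline p$ there is a genuine gap, and you have in fact put your finger on it yourself. The decomposition $\partial W(\ell^{(\overline p,z)},\overline p)=z\,W(\ell^{(\overline p,z)},\overline p)+\sum_i O_i\,y_i^{(\overline p)}(\ell^{(\overline p,z)})$ and the use of Theorem~\ref{T: SLLN} with $f(y)=y$ to show the overshoot sum is $O(W(\ell^{(\overline p,z)},\overline p))$ are both correct, and this is morally the same as the paper's use of Lemma~\ref{L: Lswitch} with $g(x)=x$. But everything then hinges on the assertion that $\partial W(\ell^{(\overline p,z)},\overline p)\to\partial W(\infty,\overline p)$ almost surely, and this is not established. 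Non-negativity of the summands on $\ell^{(\overline p,z)}$ does not by itself give a uniformly integrable martingale: $\partial W(\infty,\overline p)$ has \emph{infinite mean} (Theorem~\ref{T:mgcgce}(ii)), so the projection argument $\mathbb{E}(\partial W(\infty,\overline p)\mid\mathcal F_{\ell^{(\overline p,z)}})=\partial W(\ell^{(\overline p,z)},\overline p)$ that worked for $W$ and for $M$ is unavailable, and a ``Fatou-type inequality'' only gives one-sided bounds which cannot by themselves rule out strict loss of mass in the limit. The paper's resolution is a specific device you have not invoked: the \emph{truncated} derivative martingale
\[
\partial W(t,\overline p,x)=\sum_{i\in\mathcal I(t,x)}\bigl(x-\log|\Pi_i(t)|-c_{\overline p}t\bigr)\,y_i^{(\overline p)}(t),\qquad \mathcal I(t,x)=\bigl\{i:\inf_{s\le t}\{x-\log|\Pi_i(s)|-c_{\overline p}s\}>0\bigr\},
\]
which \emph{is} a positive uniformly integrable martingale with mean $x$, hence admits a clean projection $\partial W(\ell^{(\overline p,z)},\overline p,x)=\mathbb{E}(\partial W(\infty,\overline p,x)\mid\mathcal F_{\ell^{(\overline p,z)}})$, together with the observation (via~(\ref{left-most-speed})) that $\partial W(\infty,\overline p,x)=\partial W(\infty,\overline p)$ on an event $\gamma_x$ with $\mathbb{P}(\gamma_x)\to1$. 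Taking $x$ large then gives the almost sure convergence you need on all of the probability space. Without this (or an equivalent control) your critical-case argument is incomplete.
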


\begin{pf} Suppose that $p\in(\underline p, \overline p]$.
It is not difficult to show that for any given $\varepsilon>0$ we may
take $z$ sufficiently close to $1$
such that
%
\begin{equation}
-\frac{\log z}{1-z} \in\bigl[1, (1-\varepsilon)^{-1}\bigr].
\label{estimate}
\end{equation}
Thanks to Theorem \ref{stopmgs} and, in particular, the fact that
$M(\ell^{(p,z)}, p, x)$ is a uniformly integrable martingale with
limit $M(\infty, p,x)$,
\[
-\log M(\infty,p,x) = -\lim_{z\uparrow\infty}\sum_{i}\log\psi
_p\bigl(x-\log\bigl|\Pi_i\bigl(\ell^{(p,z)}\bigr)\bigr| -c_{p}\ell^{(p,z)}_i \bigr).
\]
Since $-{\log}|\Pi_i(\ell^{(p,z)})| -c_{p}\ell^{(p,z)}_i\geq z$ and
$\psi_p(\infty)=1$, we may
apply the estimate in (\ref{estimate}) and deduce that
%
\begin{eqnarray}
&&-e^{(p+1)x} \log M(\infty, p, x)\nonumber
\\[-8pt]
\\[-8pt] && \qquad = \lim_{z\uparrow\infty} \sum_{i}
y^{(p)}_i\bigl(\ell^{(p,z)}\bigr)L_p\bigl(x-\log\bigl|\Pi_i\bigl(\ell^{(p,z)}\bigr)\bigr| -c_{p}\ell
^{(p,z)}_i \bigr).
\label{logmglimit}
\nonumber
\end{eqnarray}

Next, we consider the restriction that $p\in(\underline{p}, {\op})$.
Recalling from Theorem \ref{stopmgs} that $W(\ell^{(p,z)},p)$
converges almost surely in $L^1$ to $W(\infty, p)$ and from Theorem
\ref{T: sv} that $L_p$ is additive slowly varying, we may
apply the conclusion of Lemma \ref{L: Lswitch} to (\ref{logmglimit})
and deduce that
%
\begin{equation}
\frac{-e^{(p+1)x}\log M(\infty, p,x)}{W(\infty,p)} =\lim_{z\uparrow
\infty}L_p(x+z).
\label{-logM=W}
\end{equation}
The right-hand side above is purely deterministic and the left-hand
side is bounded and strictly positive which leads us to the conclusion
that there exists a constant $k_p\in(0,\infty)$ such that $L_p(x)\sim
k_p$ as $x\uparrow\infty$.

Now suppose that $p =\overline p$. From Bertoin and Rouault \cite{BeRo} (see also the method in
Kyprianou \cite{kypAIHP}) it is known that
\[
\partial W(t,\overline p,x):= \sum_{i\in\mathcal{I}(t,x)}\bigl(x-{\log}
|\Pi_i(t)| -c_{\overline p}t \bigr)y^{(\overline p)}_i(t),
\]
where
\[
\mathcal{I}(t,x) =\Bigl\{ i \dvtx \inf_{s\leq t} \{ x-{\log}|\Pi_i(t)|
-c_{\overline p}t\} >0\Bigr\}
\]
is a uniformly integrable positive martingale with mean $x$; we denote
its limit by $\partial W(\infty, \overline p, x)$.
Moreover, thanks to (\ref{left-most-speed}), it is also true that
there exists a part of the probability space, say $\gamma_x$,
satisfying $\lim_{x\uparrow\infty}\mathbb{P}(\gamma_x) =1$, such
that $\partial W(\infty, \overline p, x) = \partial W(\infty,
\overline p)$ on $\gamma_x$.\vadjust{\goodbreak}

Again, thanks to Lemma \ref{L:Markov}, we may project the limit
$\partial W(\infty, \overline p, x)$ back on to the filtration
$\mathcal{F}_{\ell^{(p,z)}}$ to obtain
\begin{eqnarray*}
\partial W\bigl(\ell^{(\overline p,z)}, \overline p, x\bigr)&: =& \sum_{i\in
\mathcal{I}(\ell^{(\overline p,z)},x)} y^{(\overline p)}_i\bigl(\ell
^{(\overline p,z)}\bigr)\bigl(x-\log\bigl|\Pi_i\bigl(\ell^{(\overline p,z)}\bigr)\bigr|
-c_{\overline p}\ell^{(\overline p,z)} \bigr)\\
 &\,\,=& \mathbb{E}\bigl(\partial
W(\infty, \overline p, x)\big|\mathcal{F}_{\ell^{(\overline p,z)}}\bigr)
\end{eqnarray*}
is a positive uniformly integrable martingale with almost sure and
$L^1$ limit $\partial W(\infty, \overline p, x)$ where
\[
\mathcal{I}\bigl(\ell^{(\overline p,z)},x\bigr) =\Bigl\{ i \dvtx \inf_{s\leq\ell
^{(\overline p,z)}} \{ x-{\log}|\Pi_i(s)| -c_{\overline p}s\} >0\Bigr\}.
\]
Note also that this implies that for each $x>0$, on $\gamma_x$ we have
%
\begin{equation}
\lim_{z\uparrow\infty} \sum_i y^{(\overline p)}_i\bigl(\ell^{(\overline
p,z)}\bigr)\bigl(x-\log\bigl|\Pi_i\bigl(\ell^{(\overline p,z)}\bigr)\bigr| -c_{\overline p}\ell
^{(\overline p,z)} \bigr) 
=\partial W(\infty, \overline p)
\label{ell-dW}
\end{equation}
almost surely.
As we may take $x$ arbitrarily large, the above almost sure convergence
occurs on the whole of the probability space.

Next, turning to Lemma \ref{L: Lswitch}, we note that both $g(x) =
L_{\overline p}(x)$ and $g(x)=x$ are suitable functions to use within
this context. We, therefore, have for $x>0$
\[
\lim_{z\uparrow\infty}\frac{x+z}{L_{\overline p}(x+z)}\frac{\sum
_{i} y^{(\overline p)}_i(\ell^{(\overline p,z)})L_{\overline p}(x-{\log}
|\Pi_i(\ell^{(\overline p,z)})| -c_{\overline p}\ell^{(\overline
p,z)}_i )}{\sum_i y^{(\overline p)}_i(\ell^{(\overline p,z)})(x-{\log}
|\Pi_i(\ell^{(\overline p,z)})| -c_{\overline p}\ell^{(\overline
p,z)} )}=1
\]
almost surely.
Thanks to (\ref{logmglimit}) and (\ref{ell-dW}), it follows that
%
\begin{equation}
\lim_{z\uparrow\infty}\frac{x+z}{L_{\overline p}(x+z)} =\frac
{-e^{(\overline p+1)x} \log M(\infty, \overline p, x)}{\partial
W(\infty, \overline p)}
\label{gives uniqueness}
\end{equation}
almost surely. In particular, as the left-hand side above is
deterministic and the right-hand side is a random variable in
$(0,\infty)$, it follows that the limit must be equal to some constant
in $\in(0,\infty)$ which we identify as $1/k_{\overline p}$.
The proof is complete.
\end{pf}

\begin{theorem}\label{T: unique MMF}
For $p \in(\underline{p},\op],$ there is a unique multiplicative
martingale function $\psi_p$ which is a solution to $(\ref{E:prod
mart})$ with speed $c_p$ in $\mathcal{T}_2$ (up to additive
translation in its argument). In particular, when $p\in(\underline
p, \overline p)$, the shape of the multiplicative martingale function
is given by
\[
\psi_p(x) = \mathbb{E}\bigl(e^{-e^{-(p+1)x} W(\infty, p)} \bigr)
\]
and the shape of the critical multiplicative martingale function is
given by
\[
\psi_{\overline p}(x) = \mathbb{E}\bigl(e^{-e^{-(\overline p+1)x} \partial
W(\infty, \overline p)} \bigr).
\]
\end{theorem}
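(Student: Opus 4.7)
The plan is to exploit Theorem \ref{T:L cvg to cstt} and the identities (\ref{-logM=W}) and (\ref{gives uniqueness}) derived in its proof, which essentially already pin down the almost-sure martingale limit $M(\infty, p, x)$ as an explicit deterministic functional of either $W(\infty, p)$ or $\partial W(\infty, \overline p)$. Once this pathwise identification is in hand, uniqueness of $\psi_p$ is immediate: since $0 \le M(t,p,x) \le 1$, the martingale is uniformly integrable and hence $\psi_p(x) = M(0,p,x) = \mathbb{E}(M(\infty,p,x))$.

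First I would treat the subcritical regime $p \in (\underline p, \overline p)$. Equation (\ref{-logM=W}) combined with the conclusion $L_p(x+z) \to k_p$ of Theorem \ref{T:L cvg to cstt} gives
\[
-e^{(p+1)x} \log M(\infty, p, x) = k_p\, W(\infty, p) \quad \text{almost surely.}
\]
Writing $a_p = (p+1)^{-1}\log k_p$ and exponentiating, this becomes
\[
M(\infty, p, x) = \exp\bigl(-e^{-(p+1)(x - a_p)} W(\infty, p)\bigr) \quad \text{almost surely.}
\]
Taking expectations identifies $\psi_p(x) = \mathbb{E}\bigl(\exp(-e^{-(p+1)(x-a_p)} W(\infty, p))\bigr)$, which is the announced form up to the additive translation by $a_p$.

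The critical case $p = \overline p$ proceeds in the same spirit but must be handled separately since $W(\infty, \overline p) = 0$ almost surely renders the previous argument vacuous. Here I would invoke (\ref{gives uniqueness}) in place of (\ref{-logM=W}), which, by the same reasoning, yields the pathwise identity
\[
-e^{(\overline p+1)x} \log M(\infty, \overline p, x) = k_{\overline p}\, \partial W(\infty, \overline p) \quad \text{almost surely}
\]
and hence, after setting $a_{\overline p} = (\overline p + 1)^{-1} \log k_{\overline p}$ and taking expectations,
\[
\psi_{\overline p}(x) = \mathbb{E}\bigl(\exp(-e^{-(\overline p + 1)(x - a_{\overline p})} \partial W(\infty, \overline p))\bigr).
\]

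Main obstacle: essentially none of the analytical work remains to be done; the heavy lifting has already been absorbed into Theorem \ref{T:L cvg to cstt}, Lemma \ref{L: Lswitch} and the additive slow variation of $L_p$ from Theorem \ref{T: sv}. The one subtlety to verify is that the identifications (\ref{-logM=W}) and (\ref{gives uniqueness}) hold almost surely and not merely in distribution, so that passing to $\mathbb{E}(\cdot)$ actually recovers $\psi_p$ rather than the Laplace transform of some other variable. This is immediate from their derivations, both of which invoke the almost sure (or in-probability with deterministic right-hand side) conclusions of Lemma \ref{L: Lswitch}.
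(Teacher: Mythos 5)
Your proposal is correct and follows essentially the same route as the paper: both deduce the pathwise identity for $M(\infty,p,x)$ from (\ref{-logM=W}) (subcritical case) and (\ref{gives uniqueness}) (critical case) established inside Theorem \ref{T:L cvg to cstt}, and then use uniform integrability of the bounded multiplicative martingale to recover $\psi_p(x) = \mathbb{E}(M(\infty,p,x))$ up to the additive shift absorbing $k_p$.
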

\begin{pf} First, suppose that $p\in(\underline{p}, {\op})$ and
take any traveling wave $\psi_p$ at wave speed $c_p$. Thanks to the
uniform integrability of the associated\vadjust{\goodbreak} multiplicative martingale, as
well as (\ref{-logM=W}), we have that\vspace*{-1pt}
%
\begin{equation}
\psi_p(x) = \mathbb{E}(M(\infty, p, x)) = \mathbb{E}\bigl(e^{-k_p
e^{-(p+1)x} W(\infty, p)}\bigr).
\label{psi-is-WLT}
\end{equation}
Note that from (\ref{E: integ dif equ}), if $\psi_p(x)$ is a
traveling wave, then so is $\psi_p(x+k)$ for any $k\in\mathbb{R}$.
We, therefore, deduce from (\ref{psi-is-WLT}) that traveling waves at
wave speed $c_p$ and $p\in(\underline p, \overline p)$ are unique up
to an additive translation in the argument. Moreover, without loss of
generality, the shape of the traveling wave may be taken to be of the
form given on the right-hand side of (\ref{psi-is-WLT}) but with $k_p=1$.
Exactly the same reasoning applies in the case $p=\overline p$ except
that we appeal to the distributional identity (\ref{gives uniqueness})
instead of (\ref{-logM=W}).
\end{pf}

\section{\texorpdfstring{Proof of Theorem \protect\ref{T: main thm}}{Proof of Theorem 1}}
\label{S: proof of thm}\label{sec7}

Given the conclusion of Theorem \ref{T: unique MMF}, it remains only
to prove the first part of Theorem \ref{T: main thm}.
To this end, first suppose that $\psi_p \in\TT_2(p)$ and that\vspace*{-1pt}
%
\begin{equation}
\mathcal{A}_p\psi_p \equiv0
\label{parabolic}
\end{equation}
(and hence, implicitly we understand that $\psi_p$ is in the domain of
$\mathcal{A}_p$).
Define $u(x,t):=\mathbb{E}(M( t , p,x ))$ for $x\in\mathbb{R}$ and
$t \ge0$ where $M(t,p,x)$ is given by (\ref{E:prod mart}). Also, for
convenience, write\vspace*{-0.5pt}
\[
\mathcal{L} \psi_p(x) := \int_{\nabla_1} \biggl\{\prod_{i}\psi
_p(x- \log s_i) -\psi_p(x) \biggr\}\nu(ds).
\]
The change of variable formula gives us\vspace*{-0.5pt}
\[
u(x,t) -u(x,0) = \E \biggl[ \int_0^t \frac{\partial}{\partial t}
M(s,p,x)\,ds + \sum_{s\le t} \Delta M(s,p,x)  \biggr].
\]
%
Henceforth, we will use the notation
\[
z_i(s) = x- {\log}|\Pi_i(s)| -c s.
\]
Recall the Poisson point process construction of the fragmentation $X$
described in the \hyperref[intro]{Introduction}. Write $N(\cdot)$ for the Poisson random
measure on $\R_+ \times\N\times\nabla_1$ with measure intensity
$dt \otimes\# \otimes\nu(ds)$ which describes the evolution of the
fragmentation process. Using classical stochastic analysis of
semi-martingales and the Poissonian construction of fragmentation
processes, we deduce that\vspace*{-1pt}
%
\begin{eqnarray}
\label{E: u(xt)-u(x,0)}
 && u(x,t) - u(x,0)\nonumber \\
&& \qquad = \mathbb{E}\int_0^t \frac{\partial}{\partial t} M(\tau
,p,x)\,d\tau\nonumber\\
&& \qquad  \quad {}+\mathbb{E}\int_0^t \int_{\mathbb{N}} \int_{\nabla_1}
 \biggl\{\prod_{i\neq k} \psi_p(z_i(\tau))  \prod_{j\geq1} \psi
_p\bigl(z_k(\tau-)- \log s_j \bigr) \nonumber \\
&& \qquad  \quad
\hspace*{160pt}
  {}    -  \prod_{i} \psi_p(z_i(\tau-)) \biggr\} N(d
\tau,dk ,ds)
\\
&& \qquad = \mathbb{E} \int_0^t d\tau\cdot \prod_{i} \psi
_p(z_i(\tau)) \cdot\sum_{k} \frac{1}{\psi_p(z_k(\tau))}(-c\psi
_p'(z_k(\tau)))\nonumber\\
&&  \qquad  \quad {}  +\mathbb{E}\int_0^t d\tau\cdot\prod_{i} \psi
_p(z_i(\tau-)) \cdot
\sum_{k}
\frac{1}{\psi_p(z_k(\tau-)) } \mathcal{L}\psi_p(z_k(\tau-))\nonumber \\
&& \qquad = \mathbb{E}\int_0^t d\tau\cdot \prod_{i} \psi_p(z_i(\tau))
\cdot\sum_{k} \frac{1}{\psi_p(z_k(\tau))}
\mathcal{A}_p\psi_p(z_k(\tau))
\nonumber
\end{eqnarray}
and it is obvious that changing $\tau-$ into $\tau$ does not affect
the value of the integral in the final two steps above.

Assumption (\ref{parabolic}) now implies that for all $x \in\R$
and $t \ge0$
\[
u(x,t)= E\biggl(\prod_i \psi_p(z_i(t))\biggr) = u(x,0) =\psi_p(x).
\]
It is now a simple application of the fragmentation property to deduce that
\begin{eqnarray*} 
&&\mathbb{E}\biggl(\prod_{i} \psi_p\bigl(z_i(t+s)\bigr)\Big | \mathcal{F}_t \biggr)
\\
&& \qquad =\prod_{i} \mathbb{E}\biggl(\prod_{j\geq1}
\psi_p\bigl(z_i(t) - \log\bigl|\Pi^{(i)}_j(s)\bigr| -cs\bigr) \Big|\mathcal{F}_t\biggr) \\
&& \qquad =\prod_{i} \psi_p(z_i(t)),
\end{eqnarray*}
where $|\Pi^{(i)}|$ is the fragmentation process initiated by the
$i$-fragment at time $t$ of the original fragmentation process. Hence,
$\prod_i \psi_p(z_i(t))$ is a $\mathbb{F}$-martingale.

For the converse direction, we know from Theorem \ref{T: unique MMF}
that if $\psi_p \in\TT_2(p)$ makes $(M(t,p,x), t \ge0)$ a
martingale then, without loss of generality, we may take
%
\begin{equation}
\psi_p(x) = \E\bigl(\exp\bigl(-e^{-(p+1)x} \Delta_p\bigr)\bigr),
\label{E: generic form}
\end{equation}
where $\Delta_p = W(\infty, p)$ if $p\in(\underline p, \overline p)$
and $\Delta_p = \partial W(\infty, p)$ if $p=\overline p$.
For the rest of the proof, $\psi_p$ will be given by the above
expression. Note that since $L_p(x) = e^{(p+1)x}(1-\psi_p(x))$ is
monotone increasing (see Proposition \ref{P: in T2}), we have that
\[
0 \le L_p'(y)=(p+1)L_p(y) - e^{(p+1)y} \psi_p'(y)
\]
so that
%
\begin{equation}
\psi_p'(y) \le(p+1) \bigl(1-\psi_p(y)\bigr).
\label{E: bound psi'}
\end{equation}
This estimate and the fact that it implies the uniform boundedness of
$\psi'_p$, will be used at several points in the forthcoming text.

We start with a lemma which shows that $\mathcal{A}_p\psi_p$ is well
defined and that it is continuous.
\begin{lemma}\label{L: psi in T2 is in domain of L}
\[
[\mathcal{L}\psi_p(x)|<\infty \qquad \forall x.
\]
Furthermore, $x \mapsto\mathcal{A}_p\psi_p(x)$ is continuous.
\end{lemma}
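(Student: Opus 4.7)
The plan is to produce a $\nu$-integrable majorant of
\[
F(s,x) := \prod_{i} \psi_p(x - \log s_i) - \psi_p(x)
\]
that is locally uniform in $x$. Finiteness of $\mathcal{L}\psi_p(x)$ will then be immediate, and continuity of $\mathcal{L}\psi_p$ will follow by dominated convergence, since $F(s,\cdot)$ is continuous for each fixed $s$ because $\psi_p\in C^1$. Continuity of $\mathcal{A}_p\psi_p = -c_p\psi_p' + \mathcal{L}\psi_p$ then follows at once from that of $\psi_p'$.

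To construct the majorant, I would split $\nabla_1 = A \cup B$ with $A = \{s_1 < 1/2\}$ and $B = \{s_1 \geq 1/2\}$. On $A$, the condition (\ref{E:integ cond de nu}) forces $\nu(A) \leq 2\int(1-s_1)\nu(ds) < \infty$ and $|F| \leq 2$, so this region contributes a harmless, $x$-free bound. On $B$, write $F = T_1 + T_2$ with
\[
T_1 = \psi_p(x - \log s_1) - \psi_p(x), \qquad T_2 = \psi_p(x - \log s_1)\Bigl[\prod_{i\geq 2}\psi_p(x-\log s_i) - 1\Bigr].
\]
The estimate (\ref{E: bound psi'}) gives $\|\psi_p'\|_\infty \leq p+1$, and combined with the elementary inequality $-\log s_1 \leq 2(1-s_1)$ for $s_1\in[1/2,1]$ produces $|T_1| \leq 2(p+1)(1-s_1)$, integrable on $B$ by (\ref{E:integ cond de nu}).

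For $T_2$, I would use $|1 - \prod_i x_i| \leq \sum_i (1-x_i)$ for $x_i \in [0,1]$ together with the bound $1 - \psi_p(y) \leq C_p(1+|y|)e^{-(p+1)y}$ on $\mathbb{R}$, which follows from Theorem \ref{T:L cvg to cstt} (boundedness of $L_p$ in the subcritical range and linear growth at $+\infty$ when $p=\overline{p}$), to arrive at
\[
|T_2| \leq C_p e^{-(p+1)x}\sum_{i \geq 2}\bigl(1 + |x| + |\log s_i|\bigr)s_i^{p+1}.
\]
The sum $\sum_{i\geq 2}s_i^{p+1}$ is $\nu$-integrable by the very definition of $\underline{p}$, since $p > \underline{p}$. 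For the logarithmic correction I would pick $\epsilon>0$ with $p-\epsilon > \underline{p}$ and use $|\log s|\,s^{\epsilon} \leq C_\epsilon$ on $(0,1]$ to dominate $|\log s_i|\,s_i^{p+1}$ by $C_\epsilon s_i^{p+1-\epsilon}$, again $\nu$-integrable by definition of $\underline{p}$.

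Assembling these pieces produces a majorant whose $x$-dependence is confined to the continuous factors $e^{-(p+1)x}$ and $|x|$, so it is locally uniform in $x$, and the two claims follow. The main technical point is obtaining the asymptotic bound on $1-\psi_p(y)$ in the critical case $p=\overline{p}$, where the extra $|y|$ factor is unavoidable and drags in the logarithmic correction to the $T_2$ estimate; in the subcritical range $L_p$ is bounded, the factor $(1+|y|)$ can be dropped, and the $|\log s_i|$ term is not needed.
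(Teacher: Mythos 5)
Your proof is correct and takes a genuinely cleaner route for the continuity claim. For finiteness, both you and the paper use the same basic split --- isolating the $s_1$-factor and bounding the tail product via $|1-\prod_{i\ge2} x_i|\le\sum_{i\ge2}(1-x_i)$ --- so those estimates are essentially the same, although your $A/B$ split on $\{s_1\lessgtr 1/2\}$ makes the bookkeeping tidier. The substantive differences are two. First, for the bound on $1-\psi_p$: the paper invokes Theorem~\ref{T: sv} (additive slow variation of $L_p$) and a Potter-type bound $1-\psi_p(y)\le c\,e^{-(p+1-\epsilon)y}$, whereas you invoke the sharper Theorem~\ref{T:L cvg to cstt} to get $1-\psi_p(y)\le C_p(1+|y|)e^{-(p+1)y}$; both are available at this stage of the paper and both suffice, but yours is tighter and makes transparent exactly where the logarithmic correction enters in the critical case $p=\overline p$. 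Second, and more significantly, for continuity: the paper writes out $\mathcal L\psi_p(x+\epsilon)-\mathcal L\psi_p(x)$ as an explicit sum of several terms and controls each of them by hand (the long display (\ref{E: fourterms}) and the three bullet points following it), whereas you establish a $\nu$-integrable majorant that is uniform for $x$ in a neighbourhood of any fixed point and conclude by dominated convergence. This is cleaner and avoids the term-by-term calculation entirely. One small remark: continuity in $x$ of the integrand $s\mapsto\prod_i\psi_p(x-\log s_i)$ is not automatic ``because $\psi_p\in C^1$''; one needs the tail $\sum_{i\ge2}\big(1-\psi_p(x-\log s_i)\big)$ to converge uniformly on compacts, which holds for $\nu$-a.e.\ $s$ precisely because of the same majorant --- worth a word in a full write-up, though it does not affect the validity of the argument.
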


\begin{pf}
We will use the following fact. Given $a_n$ and $b_n$ two sequences in
$[0,1]$, we have that
%
\begin{equation} \label{E: useful bound}
\Bigl|\prod a_n -\prod b_n\Bigr| \le\sum|a_n-b_n|.
\end{equation}
As an immediate application we have that
%
\begin{eqnarray}
\label{E: two terms}
|\mathcal{L} \psi_p(x)| &=& \Biggl|\int_{\nabla_1} \Biggl\{\prod
_{i=1}^\infty\psi_p(x- \log s_i) -\psi_p(x) \Biggr\}\nu(ds) \Biggr|
\notag\\
&\le&\int_{\nabla_1}\Biggl \{ \Biggl|\prod_{i}^\infty\psi_p(x- \log s_i)
-\psi_p(x)\Biggr| \Biggr\}\nu(ds)
\nonumber
\\[-8pt]
\\[-8pt]
& \le&\int_{\nabla_1} \{ |\psi_p(x- \log s_1) -\psi
_p(x)| \}\nu(ds)\nonumber\\
&&{} + \int_{\nabla_1} \Biggl\{ \sum_{i=2}^\infty
|\psi_p(x- \log s_i) -1| \Biggr\}\nu(ds).\notag
\end{eqnarray}
We bound the two terms separately. For the first term, chose $\epsilon
$ small enough so that $(1-\epsilon\le x \le1) \Rightarrow\log x \le
2(1-x)$ to obtain
%
\begin{eqnarray}\label{E: upper bound 1}
&&\int_{\nabla_1} \bigl( \psi_p(x - \log s_1) -\psi_p(x) \bigr) \nu
(ds) \nonumber\\
&& \qquad \le\int_{\{ 1-s_1\geq\epsilon\}}\bigl(\psi_p(x - \log s_1)
-\psi_p(x)\bigr) \nu(ds)\nonumber\\
&& \qquad  \quad {} + \int_{\nabla_1 / \{1-s_1 <\epsilon\}} \bigl(\psi
_p(x - \log s_1) -\psi_p(x) \bigr) \nu(ds)\nonumber
\\[-8pt]
\\[-8pt]
&& \qquad \le\bigl(1-\psi_p(x)\bigr) \nu(\{1-s_1 \geq\epsilon\}) + (p+1)
\int_{\nabla_1 / \{1-s_1 < \epsilon\}} (- \log s_1) \nu(ds)\nonumber\\
&& \qquad \le C\bigl(1-\psi_p(x)\bigr) +(p+1) \int_{\nabla_1 / \{1-s_1 <
\epsilon\}} 2 (1-s_1) \nu(ds)\nonumber \\
&& \qquad  \le C\bigl(1-\psi_p(x)\bigr)+ C',\nonumber
\end{eqnarray}
where $C$ and $C'$ are finite constants and we have used the Mean Value
Theorem and (\ref{E: bound psi'}) in the second inequality and $\int
_{\nabla_1} (1-s_1)\nu(ds) <\infty$ in the final inequality.
This shows that $\int_{\nabla_1} ( \psi_p(x - \log s_1) -\psi_p(x)
) \nu(ds) <\infty.$

For the second term in (\ref{E: two terms}), we first observe that,
thanks to Theorem \ref{T: sv}, for each $\epsilon>0$ we can bound
%
\begin{equation}\label{E: bound for 1 - psi}
|1-\psi_p(x)| \le c e^{-(p+1-\epsilon) x},
\end{equation}
where $c$ is a constant. Hence,
\begin{eqnarray*}
\int_{\nabla_1} \Biggl\{ \sum_{i=2}^\infty|\psi_p(x- \log s_i)
-1| \Biggr\}\nu(ds) &\le& c \int_{\nabla_1} \Biggl\{ \sum
_{i=2}^\infty e^{-(p+1-\epsilon)(x-\log s_i)}  \Biggr\}\nu(ds) \\
&\le& c e^{-(p+1-\epsilon)x} \int_{\nabla_1}\Biggl \{ \sum
_{i=2}^\infty s_i^{p+1-\epsilon}  \Biggr\}\nu(ds)
\end{eqnarray*}
as $p>\underline{p}$ we can chose $\epsilon$ small enough so that
$p-\epsilon>\underline{p}$ which then implies that
\[
\int_{\nabla_1} \Biggl\{ \sum_{i=2}^\infty s_i^{p+1-\epsilon}
\Biggr\}\nu(ds)<\infty.
\]
Hence, putting the two bounds together we see that, for each $x \in\R
,$ it holds that $|\mathcal{L}\psi_p(x)|<\infty$ and hence ,$\psi
_p$ belongs to the domain of $\mathcal{A}_p.$

Let us now show that $\mathcal{A}_p\psi_p$ is continuous. As $\psi
_p$ is $C^1(\mathbb{R})$, it is enough to show that $\LL\psi_p$ is
continuous. We start by writing
\begin{eqnarray*}
&&|\LL\psi_p(x+\epsilon) - \LL\psi_p(x) |
\\ && \qquad \le\int_{\nabla_1} \Biggl\{ \Biggl|\prod_{i}^\infty\psi_p(x
+\epsilon- \log s_i)\\
&& \qquad
\hphantom{\le\int_{\nabla_1} \Biggl\{ \Biggl|}
{} -\psi_p(x+\epsilon)-\prod_{i}^\infty\psi
_p(x- \log s_i) +\psi_p(x)  \Biggr| \Biggr\}\nu(ds).
\end{eqnarray*}
Next, we decompose the integrand as a sum
%
\begin{eqnarray}
\label{E: fourterms}
&&\prod_{i}^\infty \psi_p(x +\epsilon- \log s_i) -\psi_p(x+\epsilon
) -\prod_{i}^\infty\psi_p(x- \log s_i) +\psi_p(x)
 \notag\\
&& \qquad = \bigl(\psi_p(x+\epsilon-\log s_1)-\psi_p(x+\epsilon)\bigr) \prod_{i\ge
2}^\infty \psi_p(x +\epsilon- \log s_i) \notag\\
&& \qquad  \quad {} + \psi_p(x+\epsilon) \Biggl( \prod_{i\ge2}^\infty \psi_p(x
+\epsilon- \log s_i) -1  \Biggr) \notag\\
&& \qquad  \quad {}  -  \bigl( \psi_p(x-\log s_1) - \psi_p(x)  \bigr) \prod
_{i\ge2}^\infty \psi_p(x - \log s_i)
\\
&& \qquad \quad {}  - \psi_p(x) \Biggl ( \prod_{i\ge2}^\infty \psi_p(x - \log
s_i) -1  \Biggr)\nonumber\\
&& \qquad=  \bigl( \psi_p(x+\epsilon-\log s_1) - \psi_p(x-\log s_1) - \psi
_p(x+\epsilon) +\psi_p(x)  \bigr)\nonumber\\
&& \qquad  \quad {}\times  \Biggl( \prod_{i\ge2}^\infty
\psi_p(x +\epsilon- \log s_i)  \Biggr) \notag\\
&& \qquad \quad {} + \psi_p(x-\log s_1) \Biggl ( \prod_{i\ge2}^\infty \psi_p(x
+\epsilon- \log s_i) - \prod_{i\ge2}^\infty \psi_p(x - \log s_i)
 \Biggr)\notag\\
&& \qquad \quad {} +  \bigl( \psi_p(x+\epsilon) -\psi_p(x)  \bigr) \Biggl (
\prod_{i\ge2}^\infty \psi_p(x+\epsilon- \log s_i) -1 \Biggr).
\nonumber
\end{eqnarray}
The proof will be complete once we will have shown that the integral
with respect to $\nu(ds)$ of each term on the right-hand side of (\ref
{E: fourterms}) goes to 0 as $\epsilon\to0.$

\subsection*{First term} The first term is
\begin{eqnarray*}
 &&\bigl( \psi_p(x+\epsilon-\log s_1)\\
 && \qquad   {}  - \psi_p(x-\log s_1)- \psi
_p(x+\epsilon) +\psi_p(x)  \bigr)\Biggl  ( \prod_{i\ge2}^\infty
\psi_p(x +\epsilon- \log s_i)  \Biggr).
\end{eqnarray*}
%
The term $\prod_{i\ge2}^\infty \psi_p(x +\epsilon- \log s_i) $ is
uniformly bounded between 0 and 1. On the other hand,
\begin{eqnarray*}
&&  \bigl( \psi_p(x+\epsilon-\log s_1) - \psi_p(x-\log s_1) - \psi
_p(x+\epsilon) +\psi_p(x)  \bigr) \\
&& \qquad   = \bigl ( \psi_p(x+\epsilon-\log s_1) - \psi_p(x+\epsilon
) \bigr) -  \bigl( \psi_p(x-\log s_1) -\psi_p(x)  \bigr) \\
&& \qquad   \le( -\log s_1) \psi_p'\bigl(x+\eta_1(\epsilon,s_1) \bigr) + ( \log
s_1) \psi_p'\bigl(x+ \eta_2(\epsilon,s_1) \bigr),
\end{eqnarray*}
where $\eta_1(\epsilon,s_1) \in[\epsilon, \epsilon-\log s_1]$ and
$\eta_2(\epsilon,s_1) \in[0, -\log s_1].$ Observe that for each
$s_1$, because $\psi_p$ is $C^{\infty}(\mathbb{R})$ [since (\ref
{psi-is-WLT}) holds], we have that $|\eta_1(\epsilon,s_1) - \eta
_2(\epsilon,s_1)| \to0$ as $\epsilon\to0.$ (The choice of $\eta_1$
and $\eta_2$ might not be unique but by adopting the convention that
we always chose the lowest possible such value, the above argument
becomes tight.)

Fix $\delta\in(0,1)$ and decompose
\begin{eqnarray*}
&&\int_{\nabla_1} ( -\log s_1)   \bigl[\psi_p'\bigl(x+\eta_1(\epsilon
,s_1) \bigr) - \psi_p'\bigl(x+ \eta_2(\epsilon,s_1)\bigr) \bigr ] \nu(ds) \\
&& \qquad =
\int_{\{1-s_1>\delta\}} ( -\log s_1)  \bigl[\psi_p'\bigl(x+\eta
_1(\epsilon,s_1) \bigr) - \psi_p'\bigl(x+ \eta_2(\epsilon,s_1)\bigr)  \bigr] \nu
(ds) \\
&& \qquad  \quad {}   + \int_{1- s_1\leq\delta} (-\log s_1)
\bigl[\psi_p'\bigl(x+\eta_1(\epsilon,s_1) \bigr) - \psi_p'\bigl(x+ \eta_2(\epsilon
,s_1)\bigr) \bigr ] \nu(ds)
\\ && \qquad \le C(\delta) + C'(\delta)\int_{\{1- s_1\leq \delta\}}
\bigl[\psi_p'\bigl(x+\eta_1(\epsilon,s_1) \bigr)\\
&& \qquad
\hphantom{\le C(\delta) + C'(\delta)\int_{\{1- s_1\leq \delta\}}
\bigl[}
 {} - \psi_p'\bigl(x+ \eta_2(\epsilon
,s_1) \bigr) \bigr] (1-s_1)\nu(ds),
\end{eqnarray*}
where the first integral is bounded by a constant $C(\delta)$ which is
arbitrarily small according to the choice of $\delta$ since $\psi_p'$
is uniformly bounded by (\ref{E: bound psi'}). As $(1-s_1)\nu(ds)$ is
a finite measure, we can use the dominated convergence theorem and we
see that
\[
\lim_{\epsilon\to0} C'(\delta)\int_{s_1\ge1- \delta} \bigl [\psi
_p'\bigl(x+\eta_1(\epsilon,s_1) \bigr) - \psi_p'\bigl(x+ \eta_2(\epsilon,s_1)
 \bigr)\bigr] (1-s_1)\nu(ds) =0
\]
which proves that the first term converges to 0.

\subsection*{Second term} Again using (\ref{E: useful bound}) to
bound the difference of the two products in the second term we see,
 with the help of (\ref{E: bound psi'}) and the monotonicity of
$\psi_p$, that
\begin{eqnarray*}
&& \Biggl| \psi_p(x-\log s_1)  \Biggl( \prod_{i\ge2}^\infty \psi_p(x
+\epsilon- \log s_i) - \prod_{i\ge2}^\infty \psi_p(x - \log s_i)
 \Biggr) \Biggr| \\
&& \qquad  \le \sum_{i\ge2}|\psi_p(x+\epsilon-\log s_i) -\psi
_p(x-\log s_i)|\\
&& \qquad \le \epsilon\sum_{i \ge2} \max\{ \psi_p'(y) \dvtx y \in
[x-\log s_i,x-\log s_i+\epsilon]\}\\
&& \qquad \le \epsilon(p+1) \sum_{i\geq2} \bigl(1 - \psi(x-\log s_i)\bigr)
\end{eqnarray*}
%
and we can use (\ref{E: bound for 1 - psi}) to see that
\[
\int_{\nabla_1} \sum_{i\ge2} \bigl(1-\psi_p(x-\log s_i)\bigr) \nu(ds)
<\infty.
\]
We conclude that the integral of the second term converges to 0 as
$\epsilon\to0.$

\subsection*{Third term} Let us now consider the third term
\[
 \bigl( \psi_p(x+\epsilon) -\psi_p(x)  \bigr)  \Biggl( \prod_{i\ge
2}^\infty \psi_p(x+\epsilon- \log s_i) -1 \Biggr).
\]
We have already shown [for the second term of (\ref{E: two terms})]
that
\[
\int_{\nabla_1}  \Biggl( \prod_{i\ge2}^\infty \psi_p(x+\epsilon-
\log s_i) -1 \Biggr) \nu(ds) <\infty
\]
and $  ( \psi_p(x+\epsilon) -\psi_p(x)  ) \to0$ as
$\epsilon\to0$, so the integral of this term also converges to 0.
\end{pf}

We now return to the proof of Theorem \ref{S: proof of thm} and show
that $\mathcal{A}_p\psi_p \equiv0$ where $\psi_p$ is as above.
Suppose for contradiction that there exists some $x$ such that
$\mathcal{A}_p\psi_p(x) >0$ [a similar argument will work to refute
the case $\mathcal{A}_p\psi_p(x) <0$].
We introduce the process $t\mapsto F (t)$ [which is a functional of the
fragmentation process $t \mapsto\Pi(t)$]
\[
F( t) := \prod_{i} \psi_p( z_i(t) ) \cdot\sum_{k} \frac{1}{\psi_p(z_k(t))}
\mathcal{A}_p\psi_p(z_k(t)).
\]
Observe that since $M$ is a martingale, we have as in (\ref{E:
u(xt)-u(x,0)}) that for all $t \ge0,   0 = \mathbb
{E}(M(t,p,x))-\mathbb{E}(M(0,p,x)) = \E\int_0^t F(s)\,ds$.

We claim that almost surely $\lim_{t \to0} F(t) = \mathcal{A}_p\psi_p(x).$
Indeed, we start by observing that as $M(x,t)=\prod_{i} \psi_p(
z_i(t) ) $ is a uniformly bounded martingale, then as $t \to0$
\[
\prod_{i} \psi_p( z_i(t) ) \to M(0,p,x)=\psi_p(x).
\]
So we only need to show that
\[
\sum_{k} \frac{1}{\psi_p(z_k(t))}
\mathcal{A}_p\psi_p(z_k(t))
\to\frac{\mathcal{A}_p\psi_p(x)}{\psi_p(x)}.
\]
However, since
\[
\frac{1}{\psi_p(z_1(t))}
\mathcal{A}_p\psi_p(z_1(t))
\to\frac{\mathcal{A}_p\psi_p(x)}{\psi_p(x)},
\]
because $\mathcal{A}_p\psi_p$ is continuous (Lemma \ref{L: psi in T2
is in domain of L}) and $z_1(t) \to x$, it is enough to show that
\[
\sum_{k \ge2}
\mathcal{A}_p\psi_p(z_k(t)) \to0
\]
[where we have used that for all $t$ and $k , 1 < 1/\psi_p(z_k(t))<
1/\psi_p(x)$].

Note that
\[
\sum_{k \ge2} \mathcal{A}_p\psi_p(z_k(t)) = -c_p \sum_{k\ge2}
\psi_p'(z_k(t)) + \sum_{k \ge2} \LL\psi_p (z_k(t)).
\]
%
We now turn to the sum $\sum_{k \ge2} \LL\psi_p (z_k(t)).$ Using
the bounds in (\ref{E: upper bound 1}) and (\ref{E: bound psi'}) and
the same arguments as in Lemma \ref{L: psi in T2 is in domain of L},
we see that
\begin{eqnarray*}
|\mathcal{L} \psi_p(x)| & \le&\int_{\nabla_1} \{ |\psi_p(x-
\log s_1) -\psi_p(x)| \}\nu(ds)\\
&&{} + \int_{\nabla_1} \Biggl\{
\sum_{i=2}^\infty|\psi_p(x- \log s_i) -1| \Biggr\}\nu(ds) \\
& \le& C\bigl(1-\psi_p(x)\bigr)+ C'\psi_p'(x) + C'' e^{-(p+1-\epsilon)x} \\
& \le& C\bigl(1-\psi_p(x)\bigr) + C'' e^{-(p+1-\epsilon)x},
\end{eqnarray*}
where $C,C'$ and $C''$ are (uniform in $x$) constants which may change
value from line to line.

We thus have, again appealing to (\ref{E: bound psi'}),
\[
\biggl|\sum_{k \ge2} \mathcal{A}_p\psi_p(z_k(t))\biggr| \le C \sum_{k \ge2}
\bigl(1-\psi_p(z_k(t))\bigr) + C'' e^{-(p+1-\epsilon)z_k(t)} .
\]
Using that for any $\epsilon>0$, we have $1-\psi_p(x) \le
Ce^{-(p+1-\epsilon)x}$ for some constant $C$ (which, again, may change
from line to line) we see that
\begin{eqnarray*}
\biggl|\sum_{k \ge2} \mathcal{A}_p\psi_p(z_k(t))\biggr| &\le& C \sum_{k \ge2}
|\Pi_k(t)|^{p+1-\epsilon} e^{-(p+1) (x - c_pt)} \\
& \le& C \sum_{k
\ge2} |\Pi_k(t)|^{p+1-\epsilon}
\end{eqnarray*}
and for $\epsilon$ small enough so that $p-\epsilon>\underline p$,
$\sum_{k\ge2} |\Pi_k(t)|^{p+1-\epsilon} \to0$ almost surely when
$t\to0$ on account of the fact that $W(t, p-\epsilon)\rightarrow1$
almost surely as $t\to0$. since $p >\underline{p}$. The claim that
$\lim_{t \to0} F(t) = \mathcal{A}_p\psi_p(x)$ now follows.

The almost sure right-continuity at 0 of $F$ implies that
the stopping time $\tau= \inf\{t \dvtx F(t) < \mathcal{A}_p\psi_p(x)/2\}
$ is almost surely strictly positive. Because $M$ is a uniformly
integrable martingale we must have $\E(M(\tau,p,x) )= \E(M(0,p,x))$ but
\begin{eqnarray*}
\E(M(\tau,p,x) )- \E(M(0,p,x))& =& \E\int_0^\tau F(s)\,ds \\ &\ge&
\mathcal{A}_p\psi_p(x)/2 \E(\tau) \\&>& 0,
\end{eqnarray*}
so we have a contradiction to the assumption that there exists some $x$
such that $\mathcal{A}_p\psi_p(x) >0$. This completes the proof of
Theorem \ref{S: proof of thm}.

\section*{Acknowledgments}

All three authors would like to thank   anonymous referees for their
very careful consideration of an earlier manuscript which lead to
numerous improvements.


\printaddresses

\end{document}